\theoremstyle{plain}
\newtheorem{theorem}{Theorem}[section]
\theoremstyle{remark}
\newtheorem{remark}[theorem]{Remark}
\newtheorem{example}[theorem]{Example}
\theoremstyle{plain}
\newtheorem{corollary}[theorem]{Corollary}
\newtheorem{lemma}[theorem]{Lemma}
\newtheorem{proposition}[theorem]{Proposition}
\newtheorem{definition}[theorem]{Definition}
\numberwithin{equation}{section}
\def\N{{\mathbb N}}
\def\R{{\mathbb R}}
\def\C{{\mathbb C}}
\newcommand{\F}{{\mathscr F}}
\newcommand{\one}{{{\bf 1}}}
\newcommand{\wh}{\widehat}
\newcommand{\supp}{\text{\rm supp\,}}
\newcommand{\Schw}{{\mathscr S}}
\newcommand{\TD}{{\mathscr S'}}
\def\typeout#1{\message{^^J}\message{#1}\message{^^J}}
\newif\ifSRCOK \SRCOKtrue
\def\EJECT{\SRC\eject}
\def\WinEdt#1{\typeout{:#1}}
\gdef\MainFile{\jobname.tex}
\gdef\CurrentInput{\MainFile}
\def\SRC{\ifSRCOK%
  \ifnum\inputlineno>\LASTLINE%
    \ifnum\LASTLINE<0%
      \global\PAGETOP=\inputlineno%
    \fi%
    \global\LASTLINE=\inputlineno%
    \ifnum\INPSP=0%
      \ifnum\inputlineno>\PAGETOP%
        
      \fi%
    \else%
      
    \fi%
  \fi%
\fi}
\def\PUSH#1{%
\SRC%
\ifnum\INPSP=0 \global\let\INPSTACKA=\CurrentInput \else%
\ifnum\INPSP=1 \global\let\INPSTACKB=\CurrentInput \else%
\ifnum\INPSP=2 \global\let\INPSTACKC=\CurrentInput \else%
\ifnum\INPSP=3 \global\let\INPSTACKD=\CurrentInput \else%
\ifnum\INPSP=4 \global\let\INPSTACKE=\CurrentInput \else%
\ifnum\INPSP=5 \global\let\INPSTACKF=\CurrentInput \else%
               \global\let\INPSTACKX=\CurrentInput \fi\fi\fi\fi\fi\fi%
\gdef\CurrentInput{#1}%
\WinEdt{<+ \CurrentInput}%
\global\LASTLINE=0%
\ifSRCOK\fi%
\global\advance\INPSP by 1}
\def\POP{%
\ifnum\INPSP>0 \global\advance\INPSP by -1  \fi%
\ifnum\INPSP=0 \global\let\CurrentInput=\INPSTACKA \else%
\ifnum\INPSP=1 \global\let\CurrentInput=\INPSTACKB \else%
\ifnum\INPSP=2 \global\let\CurrentInput=\INPSTACKC \else%
\ifnum\INPSP=3 \global\let\CurrentInput=\INPSTACKD \else%
\ifnum\INPSP=4 \global\let\CurrentInput=\INPSTACKE \else%
\ifnum\INPSP=5 \global\let\CurrentInput=\INPSTACKF \else%
               \global\let\CurrentInput=\INPSTACKX \fi\fi\fi\fi\fi\fi%
\WinEdt{<-}%
\global\LASTLINE=\inputlineno%
\global\advance\LASTLINE by -1%
\SRC}
\def\INPUT#1{\relax}
\def
\let\originalxxxeverypar\everypar
\newtoks\everypar
\everymath\expandafter{\the\everymath\expandafter\SRC}
\output\expandafter{\expandafter\SRCOKfalse\the\output}
\newif\ifSRCOK \SRCOKtrue
\gdef\MainFile{\jobname.tex}
\gdef\CurrentInput{\MainFile}
\def\EJECT{\SRC\eject}
\def\WinEdt#1{\typeout{:#1}}
\def\SRC{\ifSRCOK%
  \ifnum\inputlineno>\LASTLINE%
    \ifnum\LASTLINE<0%
      \global\PAGETOP=\inputlineno%
    \fi%
    \global\LASTLINE=\inputlineno%
    \ifnum\INPSP=0%
      \ifnum\inputlineno>\PAGETOP%
      \fi%
    \else%
    \fi%
  \fi%
\fi}
\def\PUSH#1{%
\SRC%
\ifnum\INPSP=0 \global\let\INPSTACKA=\CurrentInput \else%
\ifnum\INPSP=1 \global\let\INPSTACKB=\CurrentInput \else%
\ifnum\INPSP=2 \global\let\INPSTACKC=\CurrentInput \else%
\ifnum\INPSP=3 \global\let\INPSTACKD=\CurrentInput \else%
\ifnum\INPSP=4 \global\let\INPSTACKE=\CurrentInput \else%
\ifnum\INPSP=5 \global\let\INPSTACKF=\CurrentInput \else%
               \global\let\INPSTACKX=\CurrentInput \fi\fi\fi\fi\fi\fi%
\gdef\CurrentInput{#1}%
\WinEdt{<+ \CurrentInput}%
\global\LASTLINE=0%
\ifSRCOK\fi%
\global\advance\INPSP by 1}
\def\POP{%
\ifnum\INPSP>0 \global\advance\INPSP by -1  \fi%
\ifnum\INPSP=0 \global\let\CurrentInput=\INPSTACKA \else%
\ifnum\INPSP=1 \global\let\CurrentInput=\INPSTACKB \else%
\ifnum\INPSP=2 \global\let\CurrentInput=\INPSTACKC \else%
\ifnum\INPSP=3 \global\let\CurrentInput=\INPSTACKD \else%
\ifnum\INPSP=4 \global\let\CurrentInput=\INPSTACKE \else%
\ifnum\INPSP=5 \global\let\CurrentInput=\INPSTACKF \else%
               \global\let\CurrentInput=\INPSTACKX \fi\fi\fi\fi\fi\fi%
\WinEdt{<-}%
\global\LASTLINE=\inputlineno%
\global\advance\LASTLINE by -1%
\SRC}
\def\INPUT#1{\relax}
\let\OldINCLUDE=\include
\def\include#1{
\EJECT%
\PUSH{#1.tex}%
\OldINCLUDE{#1}%
\POP}
\def
\let\originalxxxeverypar\everypar
\newtoks\everypar
\everymath\expandafter{\the\everymath\expandafter\SRC}
\let\zzzxxxbibliography=\bibliography
\def\bibliography#1{\PUSH{\jobname.bbl}\zzzxxxbibliography{#1}\POP}
\output\expandafter{\expandafter\SRCOKfalse\the\output}
\begin{document}

\author{Martin Meyries}
\address{Department of Mathematics,
Karlsruhe Institute of Technology, 76128 Karlsruhe, Germany.}
\email{martin.meyries@kit.edu}

\author{Mark Veraar}
\address{Delft Institute of Applied Mathematics\\
Delft University of Technology \\ P.O. Box 5031\\ 2600 GA Delft\\The
Netherlands} \email{M.C.Veraar@tudelft.nl}

\title[Embedding results for weighted function spaces]{Sharp embedding results for spaces of smooth functions with power weights}

\keywords{Weighted function spaces, $A_p$-weights, power weights, vector-valued function spaces, Besov spaces, Triebel-Lizorkin spaces, Bessel-potential spaces, Sobolev spaces, Sobolev embedding, Nikol'skij inequality, necessary conditions, Jawerth-Franke embeddings}
\subjclass[2000]{46E35, 46E40}

\thanks{The first author was supported by the project ME 3848/1-1 of the Deutsche Forschungsgemeinschaft (DFG). The second author was supported by a VENI subsidy 639.031.930 of the Netherlands Organisation for Scientific Research (NWO)}

\maketitle
\begin{abstract}
We consider function spaces of Besov, Triebel-Lizorkin, Bessel-potential and Sobolev type on $\R^d$, equipped with power weights $w(x) = |x|^\gamma$, $\gamma>-d$. We prove two-weight Sobolev embeddings for these spaces. Moreover, we precisely characterize for which parameters the embeddings hold.  The proofs are presented in such a way that they also hold for vector-valued functions.
\end{abstract}

\section{Introduction}

Weighted spaces of smooth functions play an important role in the context of partial differential equations (PDEs).  They are widely used, for instance, to treat PDEs with degenerate coefficients or domains with a nonsmooth geometry (see e.g. \cite{Amann11, Kufner, Maz11, Tr1}). For evolution equations, power weights in time play an important role in order to obtain results for rough initial data (see \cite{CS01, KPW, Lun, PS04}). In addition, here one is naturally confronted with vector-valued spaces. Our work is motivated by this and will be applied in a forthcoming paper in order to study weighted spaces with boundary values.

For general literature on weighted function spaces we refer to  \cite{Bui82, Gri63, Kufner,  Maz11, OpicKufner, Rab, Tri83, Tr1} and references therein. Also vector-valued function spaces are intensively studied (see \cite{Am97, Ama09, Schm,SchmSiunpublished, SchmSi05, Tr97, Zim89} and references therein). Less is known on vector-valued function spaces with weights (see \cite{Amann11, MS11} and references therein). Some difficulties come from the fact that in the vector-valued case the identities $W^{1,p} = H^{1,p}$ and $L^p = F_{p,2}^0$ hold only under further geometric assumptions on the underlying Banach space (see below).

In this paper we characterize continuous embeddings of Sobolev type for vector-valued function spaces with weights of the form $w(x) = |x|^\gamma$ with $\gamma > -d$, where $d$ is the dimension of the underlying Euclidian space. We consider several classes of spaces: Besov spaces, Triebel-Lizorkin spaces, Bessel-potential spaces and Sobolev spaces. In the embeddings which we study we put (possibly different) weights $w_0(x) = |x|^{\gamma_0}$ and  $w_1(x) = |x|^{\gamma_1}$ on each of the function space.

These embeddings and their optimality are well-known in the unweighted case (see e.g. \cite{SchmSiunpublished, SiTr, Tri83}). For scalar-valued Besov spaces with general weights from Muckenhoupt's $A_\infty$-class (see Section \ref{prelim}) the embeddings were characterized in \cite{HaSkr}. In the latter work also the compact embeddings for scalar Triebel-Lizorkin spaces are characterized. Sufficient conditions for scalar-valued Triebel-Lizorkin spaces in the case of one fixed weight $w$ for both spaces are considered in \cite{Bui82}. Results for Sobolev spaces are obtained e.g. in \cite{Kufner, Maz11}. A different setting is studied  \cite{Rab}, which we discuss in Remark \ref{rem:Rab} below.

The approach of \cite{HaSkr} to the scalar Besov space case is based on discretization in terms of wavelet bases and on weighted embeddings of \cite{KuLeSiSk}. In the special case of power weights we can give elementary Fourier analytic proofs for the necessary and sufficient conditions. These apply also in the general vector-valued case, and so we do not have to impose any restriction on the underlying Banach space throughout.

\medskip

For a further discussion, let us describe the results in detail. Throughout, let $X$ be a Banach space. For $p\in (1, \infty]$ and $q\in [1, \infty]$, let $B^{s}_{p,q}(\R^d,w;X)$ denote the Besov space with weight $w(x) = |x|^{\gamma}$, where $\gamma>-d$ (see Section \ref{subsec:defBF}).
The following two-weight characterization of Sobolev type embeddings for these spaces is the first main result of our paper.
\begin{theorem}\label{thm:main1}
Let $X$ be a Banach space, $1<p_0,p_1\leq \infty$, $q_0,q_1\in [1,\infty]$, $s_0,s_1\in \R$, and $w_0(x) = |x|^{\gamma_0}$, $w_1(x) = |x|^{\gamma_1}$ with  $\gamma_0,\gamma_1>-d$.
The following assertions are equivalent:
\begin{enumerate}[(1)]
\item One has the continuous embedding
\begin{equation}\label{eq:introBesov}
B^{s_0}_{p_0, q_0}(\R^d,w_0; X)\hookrightarrow B^{s_1}_{p_1, q_1}(\R^d,w_1; X).
\end{equation}
\item The parameters satisfy one of the following conditions:
\begin{equation}\label{cond:trivial}
 \gamma_0 = \gamma_1, \qquad p_0 = p_1 \quad \text{and either}\quad s_0 > s_1 \quad \text{or} \quad s_0 = s_1\quad \text{and} \quad q_0\leq q_1;
\end{equation}
 \begin{equation}\label{cond:exp}
\frac{\gamma_1}{p_1}\leq \frac{\gamma_0}{p_0}, \qquad  \frac{d+\gamma_1}{p_1}< \frac{d+\gamma_0}{p_0} \qquad \text{and} \qquad s_0 -\frac{d+\gamma_0}{p_0} > s_1 - \frac{d+\gamma_1}{p_1};
\end{equation}
\begin{equation}\label{cond:expnew}
\frac{\gamma_1}{p_1}\leq \frac{\gamma_0}{p_0}, \qquad  \frac{d+\gamma_1}{p_1}< \frac{d+\gamma_0}{p_0}, \qquad q_0 \leq q_1\qquad \text{and} \qquad s_0 -\frac{d+\gamma_0}{p_0} =  s_1 - \frac{d+\gamma_1}{p_1}.
\end{equation}
\end{enumerate}
\end{theorem}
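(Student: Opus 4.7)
The plan is to prove sufficiency by applying a two-weight vector-valued Nikol'skij inequality to each Littlewood--Paley block of $f$, and to prove necessity by testing the embedding against explicit dilated and translated Schwartz functions whose Fourier support sits in a single dyadic shell.

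For sufficiency in the nontrivial cases \eqref{cond:exp} and \eqref{cond:expnew}, the central estimate is: there is a constant $C$ such that for every band-limited $g$ with $\supp \widehat g \subset \{|\xi|\le R\}$,
\[
\|g\|_{L^{p_1}(\R^d, w_1; X)} \le C\, R^{(d+\gamma_0)/p_0 - (d+\gamma_1)/p_1} \|g\|_{L^{p_0}(\R^d, w_0; X)}.
\]
To establish this, I would start from the scalar pointwise Peetre-type majorization $|g(x)|_X \le C_r\, R^{d/r}(M|g|_X^r)^{1/r}(x)$, valid for every $r>0$, and then reduce to a two-weight bound for the Hardy--Littlewood maximal operator with weights $|x|^{\gamma_0}$ and $|x|^{\gamma_1}$. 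For power weights this two-weight inequality holds precisely when $\gamma_1/p_1 \le \gamma_0/p_0$ and $(d+\gamma_1)/p_1 \le (d+\gamma_0)/p_0$, exactly the sign conditions built into \eqref{cond:exp} and \eqref{cond:expnew}. Since only $|g|_X$ enters the estimate, the vector-valued version comes for free, with no geometric hypothesis on $X$. Applying this Nikol'skij bound to each Littlewood--Paley block of $f$ and then taking $\ell^{q_1}(2^{js_1})$-norms gives a sequence inequality that is closed by $\ell^{q_0}\hookrightarrow\ell^{q_1}$ in the equality case \eqref{cond:expnew} and by a geometric-series absorption (valid for any $q_0,q_1$) in the strict case \eqref{cond:exp}. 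Condition \eqref{cond:trivial} reduces to the standard monotonicity of the Besov scale in $s,q$ at fixed $p,w$.

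For necessity I would test \eqref{eq:introBesov} against two families built from a fixed nonzero Schwartz function $\varphi$ whose Fourier transform is supported in a thin annulus away from the origin. First, the dilations $\varphi_R(x)=\varphi(Rx)$ have $\supp\widehat{\varphi_R}$ at scale $R$, so only one Littlewood--Paley block contributes when $R\to\infty$ (respectively only the low-frequency block when $R\to 0^+$), giving
\[
\|\varphi_R\|_{B^s_{p,q}(\R^d,w;X)}\sim R^{s-(d+\gamma)/p}\ (R\to\infty),\qquad \|\varphi_R\|_{B^s_{p,q}(\R^d,w;X)}\sim R^{-(d+\gamma)/p}\ (R\to 0^+).
\]
Inserting these into \eqref{eq:introBesov} and letting $R\to\infty$, $R\to 0^+$ yields $s_0-(d+\gamma_0)/p_0 \ge s_1-(d+\gamma_1)/p_1$ and $(d+\gamma_1)/p_1\le (d+\gamma_0)/p_0$, respectively. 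Second, the translations $\psi_R(x)=\varphi(x-Re_1)$ satisfy $\|\psi_R\|_{B^s_{p,q}(\R^d,w;X)}\sim R^{\gamma/p}$ as $R\to\infty$, because $|x|^\gamma\sim R^\gamma$ on the support of $\varphi(\,\cdot-Re_1)$; this forces $\gamma_1/p_1\le \gamma_0/p_0$. Finally, to rule out the borderline $s_0-(d+\gamma_0)/p_0 = s_1-(d+\gamma_1)/p_1$ with $q_0>q_1$, I would test against a superposition $\sum_j c_j \varphi_{2^j}$ of dilates with pairwise disjoint Fourier supports: each side of \eqref{eq:introBesov} then reduces to an $\ell^{q_i}$-norm of $(c_j)$, exposing the condition $q_0\le q_1$ in \eqref{cond:expnew} and simultaneously forcing the strict inequality in the $s$-condition of \eqref{cond:exp} whenever $q_0>q_1$. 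Together with a comparison to the unweighted embedding (which handles \eqref{cond:trivial}), this covers all parameter regimes.

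\textbf{Main obstacle.} The principal technical point is the two-weight Nikol'skij inequality: since $w_0 \neq w_1$ in general, no single-weight Muckenhoupt argument directly applies and the proof must exploit the homogeneity and radial structure of power weights, splitting the ball/annulus estimate into regimes $|x|\lesssim 1/R$ and $|x|\gtrsim 1/R$ where the weights behave differently. Once this inequality is in place, every remaining step is either a Littlewood--Paley manipulation or a scalar calculation on $|g(\cdot)|_X$, which is precisely why the characterization transfers to arbitrary Banach spaces $X$ without geometric assumptions.
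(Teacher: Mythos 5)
Your overall architecture matches the paper's: sufficiency via a two-weight Nikol'skij inequality applied blockwise, necessity via dilation, translation, and disjoint-spectrum superposition tests. However, the proof you sketch for the key Nikol'skij estimate has a genuine gap. You propose to dominate $\|g(x)\|_X$ pointwise by $(M\|g\|_X^r)^{1/r}(x)$ and then invoke a two-weight bound for the Hardy--Littlewood maximal operator from $L^{p_0/r}(|x|^{\gamma_0})$ to $L^{p_1/r}(|x|^{\gamma_1})$. This cannot work: $M$ commutes with dilations, so $\|M(h(\lambda\cdot))\|_{L^{q}(|x|^{\gamma_1})}=\lambda^{-(d+\gamma_1)/q}\|Mh\|_{L^{q}(|x|^{\gamma_1})}$, and hence any such two-weight bound forces $(d+\gamma_0)/p_0=(d+\gamma_1)/p_1$ --- exactly the case excluded by \eqref{cond:exp} and \eqref{cond:expnew}. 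Your claim that the two-weight maximal inequality holds whenever $\gamma_1/p_1\le\gamma_0/p_0$ and $(d+\gamma_1)/p_1\le(d+\gamma_0)/p_0$ is therefore false, and, relatedly, this route produces no mechanism for the essential gain $R^{\delta}$ with $\delta=(d+\gamma_0)/p_0-(d+\gamma_1)/p_1>0$ (the $R^{d/r}$ in your Peetre majorization is the wrong exponent and is in any case not present in the correct form of that inequality). The dilation invariance must be broken by using the band-limitation directly: the paper writes $g=g*\eta_R$ with $\widehat{\eta_R}\equiv 1$ on the spectrum and estimates the convolution --- by H\"older together with an $A_p$-type kernel bound (Lemma \ref{lem:Tx}) when $\gamma_0,\gamma_1\ge 0$, by the weighted Young/Stein--Weiss inequality (Lemma \ref{lem:Young}) when an exponent is negative, and by H\"older against the finite measures $|x|^{-d\pm\varepsilon}\,dx$ when $p_1<p_0$. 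Some such case analysis, or an equivalent convolution estimate, is unavoidable here.

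There is a second, smaller gap on the necessity side. Your scaling tests yield only the non-strict inequality $(d+\gamma_1)/p_1\le(d+\gamma_0)/p_0$, whereas \eqref{cond:exp} and \eqref{cond:expnew} require strictness. For $p_0\le p_1$ the case of equality collapses to $p_0=p_1$, $\gamma_0=\gamma_1$ and is absorbed into \eqref{cond:trivial}; but for $p_1<p_0$ equality is consistent with all the conditions your tests produce and must be excluded by a separate argument. The paper does this (Proposition \ref{prop:neccnonopt2}) by upgrading the blockwise estimate to a bound $\|f\|_{L^{p_1}(\R^d,w_1)}\le C\|f\|_{L^{p_0}(\R^d,w_0)}$ on all of $L^{p_0}(\R^d,w_0)$ via a density lemma for functions with compactly supported Fourier transform, and then exhibiting $f(x)=|x|^{-d/p_0}\log(1/|x|)^{-1/p_1}\one_{[0,1/2]}(|x|)$, which lies in $L^{p_0}(\R^d,w_0)$ but not in $L^{p_1}(\R^d,w_1)$. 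Your proposal contains no counterpart to this step. The remaining ingredients of your necessity argument (dilations for the two homogeneity conditions, translations for $\gamma_1/p_1\le\gamma_0/p_0$, and lacunary superpositions for $q_0\le q_1$ in the sharp case) coincide with the paper's and are fine.
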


For $p\in (1, \infty)$, $q\in [1, \infty]$ and $w$ as above, let $F^{s}_{p,q}(\R^d,w;X)$  denote the weighted Triebel--Lizorkin space (see Section \ref{subsec:defBF}). The following characterization is our second main result. Unlike in Theorem \ref{thm:main1}, the characterization is given only for $1<p_0\leq p_1<\infty$ (see, however, Proposition \ref{prop:p1kleinerp0} below). Important in this result is that $q_0\leq q_1$ is not required in the sharp case $s_0 -\frac{d+\gamma_0}{p_0} =  s_1 - \frac{d+\gamma_1}{p_1}$ as for Besov spaces.

\begin{theorem}\label{thm:main2}
Let $X$ be a Banach space, $1<p_0\leq p_1<\infty$, $q_0,q_1\in [1,\infty]$, $s_0,s_1\in \R$, and $w_0(x) = |x|^{\gamma_0}$, $w_1(x) = |x|^{\gamma_1}$ with  $\gamma_0,\gamma_1>-d$. The following assertions are equivalent:
\begin{enumerate}[(1)]
\item One has the continuous embedding
\begin{equation}\label{eq:introF}
F^{s_0}_{p_0, q_0}(\R^d,w_0; X)\hookrightarrow F^{s_1}_{p_1, q_1}(\R^d,w_1; X).
\end{equation}
\item The parameters satisfy either \eqref{cond:trivial} or
\begin{equation}\label{cond:expF}
 \frac{\gamma_1}{p_1}\leq \frac{\gamma_0}{p_0}, \qquad  \frac{d+\gamma_1}{p_1}< \frac{d+\gamma_0}{p_0} \qquad \text{and} \qquad s_0 -\frac{d+\gamma_0}{p_0} \geq   s_1 - \frac{d+\gamma_1}{p_1}.
\end{equation}

\end{enumerate}
\end{theorem}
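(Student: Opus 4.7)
The plan is to deduce Theorem \ref{thm:main2} from its Besov counterpart Theorem \ref{thm:main1} via the elementary sandwich embeddings
\begin{equation*}
B^{s}_{p,\min(p,q)}(\R^d,w;X)\hookrightarrow F^{s}_{p,q}(\R^d,w;X)\hookrightarrow B^{s}_{p,\max(p,q)}(\R^d,w;X),
\end{equation*}
which transfer unchanged to the weighted vector-valued setting from the $\ell^q$-monotonicity of the Littlewood-Paley decomposition. The only regime in which this reduction genuinely fails is the sharp equality case, which is the Jawerth-Franke regime and requires a dedicated argument.

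For \emph{necessity}, assuming \eqref{eq:introF} I sandwich both sides to obtain a Besov embedding $B^{s_0}_{p_0,\min(p_0,q_0)}(w_0;X)\hookrightarrow B^{s_1}_{p_1,\max(p_1,q_1)}(w_1;X)$, to which Theorem \ref{thm:main1} applies and produces one of the alternatives \eqref{cond:trivial}, \eqref{cond:exp}, \eqref{cond:expnew}. These immediately match \eqref{cond:trivial} or \eqref{cond:expF}; the $q$-ordering hidden in \eqref{cond:expnew} is simply discarded, which is allowed because the present theorem does not claim it. A short standard test-function argument on Littlewood-Paley blocks then pins down the extra $q_0\leq q_1$ that \eqref{cond:trivial} requires in the residual scale. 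For \emph{sufficiency}, case \eqref{cond:trivial} is pure monotonicity in $(s,q)$. Under \eqref{cond:expF} with strict inequality I chain
\begin{equation*}
F^{s_0}_{p_0,q_0}(w_0;X)\hookrightarrow B^{s_0}_{p_0,\infty}(w_0;X)\hookrightarrow B^{s_1}_{p_1,1}(w_1;X)\hookrightarrow F^{s_1}_{p_1,q_1}(w_1;X),
\end{equation*}
the middle inclusion being Theorem \ref{thm:main1} under condition \eqref{cond:exp}. This leaves only the sharp equality case.

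The main obstacle is precisely the sharp case $s_0-(d+\gamma_0)/p_0 = s_1-(d+\gamma_1)/p_1$ with no ordering on $q_0,q_1$: this is where the Besov-sandwich reduction breaks down, because the elementary inclusion $B^{s_1}_{p_1,\min(p_1,q_1)}\hookrightarrow F^{s_1}_{p_1,q_1}$ would need a $q$-ordering that is not available. Classical proofs in the scalar unweighted setting rely on atomic or wavelet decomposition, but to retain full generality in $X$ (avoiding UMD or similar geometric assumptions) and to accommodate the power weight $|x|^\gamma$, the natural route is a direct Fourier-analytic argument built on Peetre maximal functions and frequency-localised $L^{p_0}\to L^{p_1}$ improvements, carefully tracking how $|x|^\gamma$ redistributes mass across dyadic frequency annuli. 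This is the step where the proof diverges most decisively from the Besov case, in which the $\ell^q$-sum sits outside the $L^p$-norm and the weight interacts far more tamely with the scale of summation.
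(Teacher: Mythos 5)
Your reduction steps are all sound and coincide with the paper's: the necessity is obtained exactly as in the paper by sandwiching between $B^{s_0}_{p_0,1}$ and $B^{s_1}_{p_1,\infty}$ and invoking the Besov characterization (Proposition \ref{prop:necc}), and the non-sharp sufficiency cases follow from the chain through $B^{s_0}_{p_0,\infty}\hookrightarrow B^{s_1}_{p_1,1}$. But the proposal stops precisely at the one point where the theorem has content beyond Theorem \ref{thm:main1}, namely the sharp case $s_0-\frac{d+\gamma_0}{p_0}=s_1-\frac{d+\gamma_1}{p_1}$ with no ordering on $q_0,q_1$. You correctly identify that the Besov sandwich cannot reach it, but then only gesture at ``a direct Fourier-analytic argument built on Peetre maximal functions and frequency-localised $L^{p_0}\to L^{p_1}$ improvements'' without giving any of it. That is not a sketch one could fill in mechanically; it is the entire difficulty of the theorem, so as written this is a genuine gap.

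The paper closes this gap by a much softer device (Proposition \ref{prop:interpolationineq}, following Brezis--Mironescu and Schmeisser--Sickel): the scalar sequence inequality $\|(2^{sj}a_j)_j\|_{\ell^q}\le\|(2^{s_0j}a_j)_j\|_{\ell^\infty}^{1-\theta}\|(2^{s_1j}a_j)_j\|_{\ell^\infty}^{\theta}$ applied pointwise to $a_j(x)=\|S_jf(x)\|$, followed by H\"older's inequality in $x$, yields a two-weight Gagliardo--Nirenberg inequality
\[
\|f\|_{F^{s_1}_{p_1,1}(w_1;X)}\le C\,\|f\|_{F^{s_0}_{p_0,q_0}(w_0;X)}^{1-\theta}\,\|f\|_{F^{t}_{r,r}(v;X)}^{\theta}
\]
for a suitably chosen $\theta\in(0,1)$, auxiliary exponent $r\ge p_1$, smoothness $t<s_0$ and power weight $v$ lying on the same Sobolev line. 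Since $F^{t}_{r,r}=B^{t}_{r,r}$, the already proven Besov result (Theorem \ref{thm:main1} in the sharp case with equal microscopic parameters) gives $\|f\|_{F^{t}_{r,r}(v;X)}\le C\|f\|_{B^{s_1}_{p_1,p_1}(w_1;X)}\le C\|f\|_{F^{s_1}_{p_1,1}(w_1;X)}$; substituting back and absorbing the factor $\|f\|_{F^{s_1}_{p_1,1}(w_1;X)}^{\theta}$ produces the sharp embedding with $q_1=1$ and arbitrary $q_0$. This interpolation-plus-absorption step is the missing ingredient you would need to supply; the alternative of actually carrying out the maximal-function or atomic argument you allude to is substantially more work in the weighted vector-valued setting and is not what the paper does.
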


\begin{remark}
\
\begin{enumerate}[(i)]

\item The scalar version $X=\C$ of Theorem \ref{thm:main1} with  general weights $w_0,w_1$ from Muckenhoupt's $A_\infty$-class is proved in \cite[Section 2]{HaSkr}. For $w_0 = w_1\in A_\infty$ satisfying $\inf_{x\in \R^d} w(B(x,t))\geq t^\varepsilon$ with $\varepsilon>0$, the implication (2) $\Rightarrow$ (1) of Theorem \ref{thm:main2} can be found in \cite[Theorem 2.6]{Bui82} in the scalar case. In our setting, this corresponds to the case $\gamma_0 = \gamma_1 \geq 0$.

\item In the unweighted case, i.e., $\gamma_0 = \gamma_1= 0$, results such as Theorems \ref{thm:main1} and \ref{thm:main2} are well-known and go back to works of Jawerth, Nikol'skij, Peetre and Triebel (see \cite[Section 2.7.1]{Tri83} for a historical overview). A detailed account on these embeddings in the vector-valued setting can be found in \cite{SchmSiunpublished}.

\item Theorem \ref{thm:main1} gives embeddings for $p_0 >p_1$, which is only possible in the presence of weights. In  Proposition \ref{prop:p1kleinerp0} we obtain a partial result also for Triebel-Lizorkin spaces in this case.

\item In Theorems \ref{thm:main1} and \ref{thm:main2}, suppose that  $p_0 < p_1$. Then the condition $\frac{d+\gamma_1}{p_1}< \frac{d+\gamma_0}{p_0}$ in \eqref{cond:exp}, \eqref{cond:expnew} and \eqref{cond:expF} is redundant. Similiarly, if $p_0 > p_1$ then $\frac{\gamma_1}{p_1}\leq \frac{\gamma_0}{p_0}$ is redundant.

\item It follows from $p_0 \leq p_1$ and $\frac{\gamma_1}{p_1} \leq \frac{\gamma_0}{p_0}$ that $\gamma_0 = \gamma_1<0$ is excluded. In this case one only has the trivial embeddings (i.e.,\ the embeddings under the assumption  \eqref{cond:trivial}).

\item It is a well-known fact that Sobolev embeddings for Triebel-Lizorkin spaces are independent of the microscopic parameters $q_0,q_1$ (see \cite{Bui82, SchmSiunpublished, Tri83}).

\end{enumerate}
\end{remark}

Our proof of the sufficiency of the stated relations in Theorem \ref{thm:main1} is based on a direct two-weight extension of an inequality of Plancherel-Polya-Nikol'skij type (see Proposition \ref{prop:Nikolskii}, and \cite[Section 1.3]{Tri83} for an overview). For $\gamma_0,\gamma_1 \geq 0$ this inequality is obtained by extending the proof of the one-weight version of \cite{Bui82} to the present situation. For negative weight exponents we use the weighted Young inequalities from \cite{Bui94, Kerman}. The necessity of these conditions follows from suitable scaling arguments, see Propositions \ref{prop:necc} and \ref{prop:neccnonopt2}. Observe that $L^{p}(\R^d, |\cdot|^{\gamma})$ scales to the power $-\frac{d+\gamma}{p}$, which explains the importance of this number. Moreover, the relation $\frac{\gamma_1}{p_1}\leq \frac{\gamma_0}{p_0}$ is in particular sufficient to apply the results of \cite{Bui94, Kerman}.

Theorem \ref{thm:main2} is derived from Theorem \ref{thm:main1} using a weighted version of a Gagliardo-Nirenberg type inequality for $F$-spaces (see Proposition \ref{prop:interpolationineq}). Here we follow the presentation of \cite{SchmSiunpublished}.

\medskip

As a consequence of Theorem \ref{thm:main2} we characterize embeddings for Bessel-potential and Sobolev spaces. For $p\in (1, \infty)$ and $w(x) = |x|^{\gamma}$, where $\gamma\in (-d, d(p-1))$, let $H^{s,p}(\R^d,w;X)$ denote the weighted Bessel-potential space with $s\in \R$, and let $W^{m,p}(\R^d,w;X)$ denote the weighted Sobolev space with $m\in \N_0$ (see Section \ref{sub:Bessel}).
\begin{corollary}\label{cor:introHspaces}
Let $X$ be a Banach space, $1<p_0\leq p_1< \infty$, $s_0,s_1\in \R$, and $w_0(x) = |x|^{\gamma_0}$, $w_1(x) = |x|^{\gamma_1}$ with $\gamma_0\in (-d,d(p_0-1)),\gamma_1\in (-d,d(p_1-1))$. The following assertions are equivalent:
\begin{enumerate}[(1)]
\item One has the continuous embedding
\[
H^{s_0,p_0}(\R^d,w_0;X )\hookrightarrow H^{s_1,p_1}(\R^d,w_1;X).
\]
\item The parameters satisfy
\begin{equation}\label{cond:exp2}
\frac{\gamma_1}{p_1}\leq \frac{\gamma_0}{p_0}\qquad \text{and} \qquad s_0 -\frac{d+\gamma_0}{p_0}\geq s_1 - \frac{d+\gamma_1}{p_1}.
\end{equation}
\end{enumerate}
\end{corollary}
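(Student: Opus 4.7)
The plan is to deduce the corollary from Theorem~\ref{thm:main2}. The key device is a pair of one-sided sandwich embeddings relating $H^{s,p}$ to the weighted Triebel--Lizorkin scale, combined with the fact that condition~\eqref{cond:expF} imposes \emph{no ordering} on the microscopic parameters $q_0,q_1$ in the critical case.

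For any $1<p<\infty$ and $w(x)=|x|^\gamma$ with $\gamma\in(-d,d(p-1))$ (so $w\in A_p$), I would first prove the two one-sided inclusions
\[
H^{s,p}(\R^d,w;X)\hookrightarrow F^{s}_{p,\infty}(\R^d,w;X)\quad\text{and}\quad F^{0}_{p,1}(\R^d,w;X)\hookrightarrow L^{p}(\R^d,w;X).
\]
The second is $f=\sum_k\varphi_k\ast f$ followed by Minkowski. For the first, writing $g:=(1-\Delta)^{s/2}f$, spectral localization shows that the Fourier multiplier producing $2^{ks}\varphi_k\ast f$ from $g$ is a uniform-in-$k$ dilation of a fixed Schwartz kernel, so pointwise $2^{ks}\|\varphi_k\ast f(\cdot)\|_X\le CM(\|g\|_X)(\cdot)$; the $L^p(w)$-boundedness of the Hardy--Littlewood maximal operator (valid since $w\in A_p$) then closes the argument. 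The reverse inclusion $F^{s}_{p,1}\hookrightarrow H^{s,p}$ is deliberately avoided, as it would demand a Fefferman--Stein inequality at $r=1$ that is unavailable without UMD-type hypotheses on $X$.

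For sufficiency $(2)\Rightarrow(1)$, the subcase $p_0=p_1,\gamma_0=\gamma_1$ is trivial from $s_0\ge s_1$; otherwise $\frac{d+\gamma_0}{p_0}>\frac{d+\gamma_1}{p_1}$ strictly, and setting $t_0:=\frac{d+\gamma_0}{p_0}-\frac{d+\gamma_1}{p_1}>0$ I first handle the base case $s_1=0$, $s_0=t_0$ via
\[
H^{t_0,p_0}(\R^d,w_0;X)\hookrightarrow F^{t_0}_{p_0,\infty}(\R^d,w_0;X)\hookrightarrow F^{0}_{p_1,1}(\R^d,w_1;X)\hookrightarrow L^{p_1}(\R^d,w_1;X),
\]
the middle arrow being Theorem~\ref{thm:main2} in~\eqref{cond:expF} with $q_0=\infty$, $q_1=1$. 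For general $s_0,s_1$ obeying~\eqref{cond:exp2}, the lift $\tilde f:=(1-\Delta)^{s_1/2}f$ lies in $H^{s_0-s_1,p_0}(\R^d,w_0;X)$ with $\|\tilde f\|_{H^{s_0-s_1,p_0}}=\|f\|_{H^{s_0,p_0}}$; since $s_0-s_1\ge t_0$, the trivial smoothing inclusion $H^{s_0-s_1,p_0}(\R^d,w_0;X)\hookrightarrow H^{t_0,p_0}(\R^d,w_0;X)$ combined with the base case yields $\tilde f\in L^{p_1}(\R^d,w_1;X)$, i.e.\ $f\in H^{s_1,p_1}(\R^d,w_1;X)$.

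For necessity $(1)\Rightarrow(2)$ I use the analogous Besov sandwich $B^{s_0}_{p_0,1}(\R^d,w_0;X)\hookrightarrow H^{s_0,p_0}(\R^d,w_0;X)$ and $H^{s_1,p_1}(\R^d,w_1;X)\hookrightarrow B^{s_1}_{p_1,\infty}(\R^d,w_1;X)$, proved by the same spectral-localization arguments; the assumed $H$-embedding then forces $B^{s_0}_{p_0,1}(\R^d,w_0;X)\hookrightarrow B^{s_1}_{p_1,\infty}(\R^d,w_1;X)$, whereupon the necessity direction of Theorem~\ref{thm:main1} leaves only~\eqref{cond:trivial}, \eqref{cond:exp}, or~\eqref{cond:expnew}---each plainly implying~\eqref{cond:exp2}. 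The main subtlety in the whole argument is routing sufficiency through $L^{p_1}$ at the base case instead of the unavailable $F^{s_1}_{p_1,1}\hookrightarrow H^{s_1,p_1}$: this routing is precisely what lets the full $q$-independence of Theorem~\ref{thm:main2} carry the critical case.
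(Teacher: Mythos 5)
Your proof is correct and is, at bottom, the paper's own argument: sandwich $H^{s,p}(\R^d,w;X)$ between Triebel--Lizorkin (resp.\ Besov) spaces and invoke Theorem \ref{thm:main2} for sufficiency (resp.\ the necessity part of Theorem \ref{thm:main1}). The paper does this in one line via Proposition \ref{prop:squeezeHF}, which gives $F^{s}_{p,1}(\R^d,w;X)\hookrightarrow H^{s,p}(\R^d,w;X)\hookrightarrow F^{s}_{p,\infty}(\R^d,w;X)$ for \emph{every} Banach space $X$ and every $w\in A_p$, so that sufficiency reads $H^{s_0,p_0}(w_0)\hookrightarrow F^{s_0}_{p_0,\infty}(w_0)\hookrightarrow F^{s_1}_{p_1,1}(w_1)\hookrightarrow H^{s_1,p_1}(w_1)$. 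Your stated reason for avoiding the left half of this sandwich is a misconception: $F^{s}_{p,1}\hookrightarrow H^{s,p}$ requires no Fefferman--Stein inequality at $r=1$ and no UMD-type hypothesis; for $s=0$ it is just the pointwise triangle inequality $\|f(x)\|_X\le\sum_{k}\|S_kf(x)\|_X$ followed by taking $L^p(w)$-norms (the $\ell^1$-sum sits \emph{inside} the $L^p$-norm, which is exactly the $F^0_{p,1}$-norm), and the general $s$ follows by lifting --- this is precisely how Proposition \ref{prop:squeezeHF} is proved. Your detour through $L^{p_1}=H^{0,p_1}$ and the lift $J_{s_1}$ is nevertheless valid, and in fact it silently reproves the $s=0$ instance of that ``unavailable'' embedding, so nothing is lost. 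The only genuine (and minor) loose ends are the ``trivial smoothing inclusion'' $H^{\sigma,p}(w)\hookrightarrow H^{\tau,p}(w)$ for $\sigma>\tau$ and the subcase $p_0=p_1$, $\gamma_0=\gamma_1$ with $s_0>s_1$: these need a one-line justification, e.g.\ that the Bessel kernel $G_{\sigma-\tau}$ is a positive, integrable, radially decreasing function, so Lemma \ref{lem:Stein} makes $J_{\tau-\sigma}$ bounded on $L^p(\R^d,w;X)$ for $w\in A_p$; alternatively this too follows from the full sandwich of Proposition \ref{prop:squeezeHF} combined with Proposition \ref{prop:elementaryembeddingBTL}.
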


\begin{corollary}\label{cor:introWspaces}
Let $X$ be a Banach space, $1<p_0\leq p_1< \infty$, $s_0,s_1\in \N_0$, and $w_0(x) = |x|^{\gamma_0}$, $w_1(x) = |x|^{\gamma_1}$ with $\gamma_0\in (-d,d(p_0-1)),\gamma_1\in (-d,d(p_1-1))$. The following assertions are equivalent:
\begin{enumerate}[(1)]
\item One has the continuous embedding
\[W^{s_0,p_0}(\R^d,w_0;X )\hookrightarrow W^{s_1,p_1}(\R^d,w_1;X).\]
\item The parameters satisfy  \eqref{cond:exp2}.
\end{enumerate}
\end{corollary}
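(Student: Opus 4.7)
The plan is to deduce Corollary \ref{cor:introWspaces} from Theorems \ref{thm:main1} and \ref{thm:main2} by sandwiching $W^{s,p}$ between Besov and Triebel--Lizorkin spaces of the same smoothness and integrability. More precisely, for $s\in\N_0$, $p\in(1,\infty)$ and $w(x)=|x|^\gamma$ with $\gamma\in(-d,d(p-1))$, I will use the continuous inclusions
\[
F^s_{p,1}(\R^d,w;X)\hookrightarrow W^{s,p}(\R^d,w;X)\hookrightarrow F^s_{p,\infty}(\R^d,w;X),\qquad B^s_{p,1}(\R^d,w;X)\hookrightarrow W^{s,p}(\R^d,w;X)\hookrightarrow B^s_{p,\infty}(\R^d,w;X),
\]
which should be available from the preliminary section as a consequence of the Littlewood--Paley definitions: at $s=0$ the inclusions $F^0_{p,1}(w;X)\hookrightarrow L^p(w;X)\hookrightarrow F^0_{p,\infty}(w;X)$ (together with their Besov counterparts) are standard and survive in the vector-valued setting since no UMD hypothesis is needed, and the lifting to $s\in\N_0$ is done by applying the same estimates to the partial derivatives of order $\le s$. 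Crucially, this avoids invoking any identification of $W^{s,p}$ with $H^{s,p}$, which would fail for general $X$.

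For the implication (2)$\Rightarrow$(1), I would split into two cases. If $p_0=p_1$ and $\gamma_0=\gamma_1$, then \eqref{cond:exp2} reduces to $s_0\ge s_1$ and the embedding is immediate from the definition of $W^{s,p}$. In the remaining case, the hypotheses $p_0\le p_1$ and $\gamma_1/p_1\le\gamma_0/p_0$, together with the negation of simultaneous equalities, force the strict inequality $\frac{d+\gamma_1}{p_1}<\frac{d+\gamma_0}{p_0}$. Hence \eqref{cond:expF} holds (for arbitrary microscopic parameters), and taking $q_0=\infty$, $q_1=1$ in Theorem \ref{thm:main2} yields $F^{s_0}_{p_0,\infty}(\R^d,w_0;X)\hookrightarrow F^{s_1}_{p_1,1}(\R^d,w_1;X)$. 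Sandwiching on both sides then produces
\[
W^{s_0,p_0}(\R^d,w_0;X)\hookrightarrow F^{s_0}_{p_0,\infty}(\R^d,w_0;X)\hookrightarrow F^{s_1}_{p_1,1}(\R^d,w_1;X)\hookrightarrow W^{s_1,p_1}(\R^d,w_1;X).
\]

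For (1)$\Rightarrow$(2), I would apply the Besov version of the sandwich: composing with the assumed Sobolev embedding gives $B^{s_0}_{p_0,1}(\R^d,w_0;X)\hookrightarrow B^{s_1}_{p_1,\infty}(\R^d,w_1;X)$. Since here $q_0=1\le\infty=q_1$, the necessity part of Theorem \ref{thm:main1} forces one of \eqref{cond:trivial}, \eqref{cond:exp}, \eqref{cond:expnew} to hold, and a direct inspection shows that each of these in turn implies \eqref{cond:exp2}: in the trivial case one has $\gamma_0=\gamma_1$, $p_0=p_1$ and $s_0\ge s_1$, while in \eqref{cond:exp}/\eqref{cond:expnew} the conditions of \eqref{cond:exp2} are literally a subset of the hypothesis.

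The one step that really requires care is the verification of the Besov/Triebel--Lizorkin sandwiching of $W^{s,p}$ in the weighted vector-valued setting; the rest of the argument is a clean bookkeeping reduction to Theorems \ref{thm:main1} and \ref{thm:main2}. Assuming that sandwich is established in Section \ref{subsec:defBF} or in a short preparatory lemma, the entire proof of Corollary \ref{cor:introWspaces} should fit in a few lines.
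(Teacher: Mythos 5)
Your proof is correct and follows essentially the same route as the paper: the paper's own proof of Corollary \ref{cor:introWspaces} is precisely ``Theorem \ref{thm:main2} plus Proposition \ref{prop:squeezeHF}'', and the sandwich $F^{m}_{p,1}\hookrightarrow W^{m,p}\hookrightarrow F^{m}_{p,\infty}$ (and its Besov counterpart) that you flag as the crucial ingredient is exactly \eqref{eq:TriebelLizorkinW} and \eqref{eq:BesovW} of Proposition \ref{prop:squeezeHF}, valid for $w\in A_p$, which covers $\gamma\in(-d,d(p-1))$. Your case split for (2)$\Rightarrow$(1) and the reduction of (1)$\Rightarrow$(2) to the necessity part of Theorem \ref{thm:main1} via the Besov sandwich are both sound and only cosmetically different from deducing necessity through Theorem \ref{thm:main2}.
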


The necessity of \eqref{cond:exp2} for the embeddings in the Corollaries \ref{cor:introHspaces} and \ref{cor:introWspaces} is actually valid for all $\gamma_0,\gamma_1 > -d$, as a consequence of Proposition \ref{prop:necc}. The restrictions in the sufficiency part mean that $w_0 \in A_{p_0}$ and $w_1\in A_{p_1}$, where $A_p$ denotes Muckenhoupt's class (see Section \ref{prelim}).

In the general vector-valued case, the $H$- and the integer $W$-spaces are not contained in the $B$- and $F$-scale, respectively. It holds that
\begin{equation}\label{H=F}
 H^{s,p}(\R^d;X) = F^{s}_{p,2}(\R^d;X), \qquad \text{for some }s\in \R, \quad p\in (1, \infty),
\end{equation}
if and only if $X$ is isomorphic to a Hilbert space (see \cite{HaMe96} and \cite[Remark 7]{SchmSiunpublished}). Moreover,
\begin{equation}\label{H=W}
 H^{1,p}(\R^d;X) = W^{1,p}(\R^d;X) \qquad \text{for some }p\in (1, \infty)
\end{equation}
characterizes the UMD property of $X$ (see \cite{Ama95,McC84,Zim89} for details).

We obtain the sufficient conditions in the above corollaries from embeddings of type \begin{equation}\label{squezzeintro}
F_{p,1}^s(\R^d,w;X) \hookrightarrow H^{s,p}(\R^d,w;X)\hookrightarrow F_{p,\infty}^s(\R^d,w;X),
\end{equation}
which are valid for all Banach spaces $X$ and $A_p$-weights $w$ (see Proposition \ref{prop:squeezeHF}), and the independence of the embeddings for the $F$-spaces. For the latter embedding in \eqref{squezzeintro} we show in Remark \ref{remarkProp3.12} that it is necessary that $w$ satisfies a local $A_p$-condition, which results in  $\gamma_0<d(p_0-1)$. Then $\gamma_1<d(p_1-1)$ is a consequence of $\gamma_1/p_1\leq \gamma_0/ p_0$ and $p_0\leq p_1$. This explains our restrictions on the weight exponents to the $A_p$-case. In view of the Theorems \ref{thm:main1} and \ref{thm:main2}, and the results in \cite{Rab} (see Remark \ref{rem:Rab}), we do not expect these restrictions to be necessary.

\medskip

The result of Theorem \ref{thm:main1} for Besov spaces and $1<p_1<p_0<\infty$ cannot be extended to $F$-, $H$- and $W$-spaces in the usual way. We prove the following  in this case.
\begin{proposition}\label{prop:p1kleinerp0}
Let $X$ be a Banach space, $1<p_1<p_0 < \infty$, $s_0,s_1\in \R$ and $\gamma_0\in (-d,d(p_0-1)),\gamma_1\in (-d,d(p_1-1))$, and let $w_0(x) = |x|^{\gamma_0}$ and $w_1(x) = |x|^{\gamma_1}$. The following assertions are equivalent:
\begin{enumerate}[(1)]
\item One has the continuous embedding
\[
H^{s_0,p_0}(\R^d,w_0;X )\hookrightarrow H^{s_1,p_1}(\R^d,w_1;X).
\]

\item One has the continuous embedding
\[
F^{s_0}_{p_0,2}(\R^d,w_0;X )\hookrightarrow F^{s_1}_{p_1,2}(\R^d,w_1;X).
\]

\item The parameters satisfy
\begin{equation}\label{eq:cond3}
\frac{d+\gamma_1}{p_1}< \frac{d+\gamma_0}{p_0} \qquad \text{and} \qquad s_0 -\frac{d+\gamma_0}{p_0}>s_1 -
\frac{d+\gamma_1}{p_1}.
\end{equation}
\end{enumerate}
\end{proposition}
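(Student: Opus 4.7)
My plan is to exploit the interplay between $H$-, $F$- and $B$-spaces via the squeeze in Proposition~\ref{prop:squeezeHF}, together with the Besov embeddings of Theorem~\ref{thm:main1}. A preliminary observation will be crucial: under $p_0>p_1$, the two strict inequalities in \eqref{eq:cond3} automatically force $\frac{\gamma_1}{p_1}<\frac{\gamma_0}{p_0}$, because rewriting the differential-dimension inequality yields $\frac{\gamma_0}{p_0}-\frac{\gamma_1}{p_1}>\frac{d}{p_1}-\frac{d}{p_0}>0$. Hence all hypotheses of the strict case~\eqref{cond:exp} of Theorem~\ref{thm:main1} will be at my disposal.

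For (3)$\Rightarrow$(1) and (3)$\Rightarrow$(2) I would chain soft $F\leftrightarrow B$ and $H\leftrightarrow F$ embeddings with Theorem~\ref{thm:main1}. For (3)$\Rightarrow$(1) the chain
\[
H^{s_0,p_0}(w_0;X)\hookrightarrow F^{s_0}_{p_0,\infty}(w_0;X)\hookrightarrow B^{s_0}_{p_0,\infty}(w_0;X)\hookrightarrow B^{s_1}_{p_1,1}(w_1;X)\hookrightarrow F^{s_1}_{p_1,1}(w_1;X)\hookrightarrow H^{s_1,p_1}(w_1;X)
\]
works with the middle Besov embedding supplied by Theorem~\ref{thm:main1} under \eqref{cond:exp}. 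For (3)$\Rightarrow$(2) the same chain does not quite apply because with fixed $s$ the $F\leftrightarrow B$ sandwich does not link $F^{s}_{p,2}$ to $B^{s}_{p,\infty}$ or to $B^{s}_{p,1}$; instead I would pick auxiliary $\tilde s_0<s_0$ and $\tilde s_1>s_1$ close enough that $\tilde s_0-\frac{d+\gamma_0}{p_0}>\tilde s_1-\frac{d+\gamma_1}{p_1}$ still holds, and chain
\[
F^{s_0}_{p_0,2}(w_0;X)\hookrightarrow F^{\tilde s_0}_{p_0,\infty}(w_0;X)\hookrightarrow B^{\tilde s_0}_{p_0,\infty}(w_0;X)\hookrightarrow B^{\tilde s_1}_{p_1,1}(w_1;X)\hookrightarrow F^{\tilde s_1}_{p_1,2}(w_1;X)\hookrightarrow F^{s_1}_{p_1,2}(w_1;X).
\]

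For the necessary direction (1)$\Rightarrow$(3) and (2)$\Rightarrow$(3) I would first call on the scaling Proposition~\ref{prop:necc} to obtain the non-strict inequalities $\frac{d+\gamma_1}{p_1}\le\frac{d+\gamma_0}{p_0}$ and $s_0-\frac{d+\gamma_0}{p_0}\ge s_1-\frac{d+\gamma_1}{p_1}$. To upgrade the first inequality to strict I would test the embedding on $f_\lambda(x)=f(\lambda x)$ as $\lambda\downarrow 0$, a regime in which the Bessel potential acts essentially as the identity; in the equality case this yields the $L^p$-estimate $\|f\|_{L^{p_1}(w_1)}\le C\|f\|_{L^{p_0}(w_0)}$ for every Schwartz $f$, which by H\"older's inequality would require the integrability of $|x|^{(\gamma_1 p_0-\gamma_0 p_1)/(p_0-p_1)}=|x|^{-d}$ on $\R^d$, a contradiction. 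To upgrade the differential-dimension inequality to strict, the analogous test with $\lambda\to\infty$ produces in the equality case a homogeneous Riesz-potential estimate of the form $\|I_{s_0-s_1}g\|_{L^{p_1}(w_1)}\le C\|g\|_{L^{p_0}(w_0)}$ lying in the anti-HLS range $p_0>p_1$, which is ruled out by Proposition~\ref{prop:neccnonopt2}.

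The hardest step will be ruling out the equality case in the differential dimension. A direct Besov sandwich is not tight enough: from (1) or (2) one can only squeeze out the Besov embedding $B^{s_0}_{p_0,1}(w_0;X)\hookrightarrow B^{s_1}_{p_1,\infty}(w_1;X)$, and the equality alternative \eqref{cond:expnew} of Theorem~\ref{thm:main1} is perfectly compatible with the microscopic indices $q_0=1\le\infty=q_1$. The real obstruction must come from the finer $F$- or $H$-structure at the two endpoints, and this is precisely where Proposition~\ref{prop:neccnonopt2}, or equivalently an anti-HLS argument for weighted Riesz potentials, has to carry the weight.
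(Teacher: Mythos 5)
Your sufficiency argument (3)$\Rightarrow$(1),(2) is correct and is essentially the paper's: the key observation that \eqref{eq:cond3} together with $p_1<p_0$ forces $\frac{\gamma_1}{p_1}<\frac{\gamma_0}{p_0}$ is exactly what makes the strict case \eqref{cond:exp} of Theorem \ref{thm:main1} applicable, and the chains through Propositions \ref{prop:elementaryembeddingBTL} and \ref{prop:squeezeHF} are the ones used in the paper (your detour through auxiliary $\tilde s_0,\tilde s_1$ for the $F$-case is unnecessary, since $F^{s_0}_{p_0,2}\hookrightarrow F^{s_0}_{p_0,\infty}\hookrightarrow B^{s_0}_{p_0,\infty}$ and $B^{s_1}_{p_1,1}\hookrightarrow F^{s_1}_{p_1,1}\hookrightarrow F^{s_1}_{p_1,2}$ already close the loop at the original indices, but it is not wrong). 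Likewise, for the necessity of the strict inequality $\frac{d+\gamma_1}{p_1}<\frac{d+\gamma_0}{p_0}$ your converse-H\"older computation (non-integrability of $|x|^{-d}$) is a legitimate variant of what Proposition \ref{prop:neccnonopt2} achieves with an explicit log-modified power function, and in any case you could simply cite that proposition after squeezing out $B^{s_0}_{p_0,1}(w_0)\hookrightarrow B^{s_1}_{p_1,\infty}(w_1)$ from (1).

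The genuine gap is in the last and hardest step: excluding the critical case $s_0-\frac{d+\gamma_0}{p_0}=s_1-\frac{d+\gamma_1}{p_1}$. You correctly reduce this (after normalizing $s_1=0$ and rescaling away the lower-order term) to the failure of a scale-invariant weighted Riesz-potential estimate $\|(-\Delta)^{-s_0/2}g\|_{L^{p_1}(\R^d,w_1)}\leq C\|g\|_{L^{p_0}(\R^d,w_0)}$ with $p_1<p_0$, but you then assert that this "is ruled out by Proposition \ref{prop:neccnonopt2}." It is not: that proposition is an order-zero statement about the weight exponents in an inhomogeneous Besov embedding and says nothing about the smoothness-critical case. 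Indeed, as you yourself observe, the Besov-scale information available from (1) or (2) is compatible with \eqref{cond:expnew}, and a single-frequency-band (Nikol'skij) analysis cannot detect the failure either, because on each fixed annulus the critical estimate does hold; the obstruction only appears for functions whose frequency content is spread over infinitely many dyadic scales, and no scaling argument can produce it since the estimate is exactly scale-invariant. This is precisely what the paper's Proposition \ref{prop:neccnonopt} supplies: one takes $g(x)=|x|^{-a}\log(1/|x|)^{-b}\one_{[0,1/2]}(|x|)$ with $a=\frac{d+\gamma_0}{p_0}$ and $b=1/p_1$, checks $g\in L^{p_0}(\R^d,w_0)$ using $p_0/p_1>1$, and then proves the pointwise lower bound $(-\Delta)^{-s_0/2}g(x)\gtrsim |x|^{s_0-a}\log(2/|x|)^{-b}$ near the origin via the radial representation of the Riesz potential from \cite{NDD}, which makes the $L^{p_1}(\R^d,w_1)$-norm diverge logarithmically. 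Without this (or an equivalent multi-scale counterexample) your proof of (1)$\Rightarrow$(3) and (2)$\Rightarrow$(3) is incomplete at exactly the point that distinguishes the $H$-/$F$-scale from the Besov scale.
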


Note that the above result shows that there is no embedding in the important sharp case $s_0 -\frac{d+\gamma_0}{p_0}=s_1 - \frac{d+\gamma_1}{p_1}$. Surprisingly, this is different for Besov spaces (see Theorem \ref{thm:main1}). The same holds if the $H$-spaces are replaced by $W$-spaces since in the scalar case $W^{m,p}(\R^d,w) = H^{m,p}(\R^d,w)$ whenever $m\in \N$ and $w\in A_p$ (see the proof of Corollary \ref{cor:Wcasecounter}).

\begin{remark}
Let $p_0>p_1$, $w_0(x) = |x|^{\gamma_0}$ and $w_1(x) =  |x|^{\gamma_1}$ with $\gamma_0, \gamma_1>-d$.
It would be interesting to characterize for which parameters one has $F^{s_0}_{p_0,q_0}(\R^d, w_0) \hookrightarrow F^{s_1}_{p_1,q_1}(\R^d,w_1)$.
Proposition \ref{prop:p1kleinerp0} only contains a partial answer to this, because of the restrictions on $\gamma_0,\gamma_1, q_0$ and $q_1$.
\end{remark}

\begin{remark}Let $J_{s} = (1-\Delta)^{s/2}$ be the Bessel potential. Corollary \ref{cor:introHspaces} gives a characterization of the boundedness of $J_{-s_0}:L^{p_0}(\R^d,w_0)\to L^{p_1}(\R^d,w_1)$ for certain power weights $w_0$ and $w_1$ for $p_0\leq p_1$ and all $s_0\geq 0$. The equivalent condition of \cite{Rak97} can be difficult to check. Moreover, in Proposition \ref{prop:p1kleinerp0} we provide a characterization of the boundedness of $J_{-s_0}$ for power weights $w_0$ and $w_1$ for $p_1< p_0$ and all $s_0\geq 0$. It seems that the problem for $J_{-s}$ with $p_1<p_0$ has not been considered before.

In \cite{SawWhe92} necessary and sufficient conditions on $w_0, w_1, p_0, p_1, s_0$ can be found for the boundedness of the Riesz potential $I_{-s_0} = (-\Delta)^{-s_0/2}$ for $p_0\leq p_1$ and $0\leq s_0<d$. The case $p_1<p_0$ has been considered in \cite{Verb}.
\end{remark}

Together with elementary embeddings, the Theorems \ref{thm:main1} and \ref{thm:main2} provide certain embeddings between $B$- and $F$-spaces. These can be strengthened to so-called Jawerth-Franke embeddings, see Theorem \ref{thm:sobolevJawerth}. Here we restrict to the case of $A_p$-weights (see Remark \ref{discJF} for a discussion). We also use the above results to show embeddings of weighted $B$-, $F$-, $H$- and $W$-spaces into (unweighted) spaces of continuous functions, see Proposition \ref{embedding-C}. Again we can follow the presentation of \cite{SchmSiunpublished}. For the case of a single weight $w\in A_\infty$, in the scalar-valued case the Jawerth-Franke embeddings are shown in \cite{Bui82, HaSkr}.

\begin{remark}\label{rem:Rab}
Recently, in \cite{Rab} the classical Caffarelli--Kohn-Nirenberg inequalities (see \cite{CKN84}) are extended to general power weights. It is  characterized when
\begin{equation}\label{eq:Rabieremb}
W^{1,p_0,q}(\R^d\setminus\{0\}, w_0,w)\hookrightarrow L^{p_1}(\R^d, w_1),
\end{equation}
   where $W^{1,p_0,q}(\R^d\setminus\{0\}, w_0,w)$ is the space of functions $f:\R^d\setminus \{0\}\to \R$ which are locally integrable and satisfy
\[\|f\|_{L^{p_0}(w_0)}  + \|\nabla f\|_{L^q(w)} <\infty.\]
Here it is assumed that $w_0(x) = |x|^{\gamma_0}$,  $w(x) = |x|^{\gamma}$ and $w_1(x) = |x|^{\gamma_1}$ with $\gamma_0, \gamma,\gamma_1\in \R$ and $p_0,q,p_1\in [1, \infty)$. Our results can be compared to the ones in \cite{Rab} only in a very special case of both our and the setting in \cite{Rab}, namely for $\gamma_0 = \gamma\in (-d,d(p_0-1))$, $\gamma_1\in (-d,d(p_1-1))$, $q=p_0, p_1<\infty$, $s_0=1$ and $s_1=0$. In this special case the equivalence of \eqref{eq:Rabieremb} with the condition on the parameters $\gamma_0, \gamma_1, p_0, p_1$ in \cite{Rab} coincides with ours (see Corollary \ref{cor:introWspaces} if $p_0\leq p_1$ and Proposition \ref{prop:p1kleinerp0}  if $p_0>p_1$).

On the other hand, one of the main and novel points in the results in \cite{Rab} is that the powers weights are not necessarily of $A_{\infty}$-type and two different powers weights $w_0,w$ and exponents $p_0, q$ in the Sobolev space are considered.
\end{remark}

\begin{remark}
It would be interesting to extend the above results (e.g.\ Theorems \ref{thm:main1}, \ref{thm:main2}, etc.)\ to homogeneous Besov, Triebel-Lizorkin, Bessel-potential spaces and Sobolev spaces.
Some parts are easy to extend. In particular, Proposition \ref{prop:Nikolskii} below can be applied in the homogeneous setting as well. The identity $s_0 -\frac{d+\gamma_0}{p_0} = s_1 -\frac{d+\gamma_1}{p_1}$ plays a crucial role.
\end{remark}

\textbf{Outline.} In Section 2 we consider preliminaries for the treatment of weighted function spaces, such as maximal inequalities and a multiplier theorems. These are used in Section 3 to derive basic properties of the vector-valued spaces. In Section 4 we prove the characterization for Besov spaces, and in Section 5 the case of $F$-, $H$- and $W$ spaces is treated. In Section 6 we show embeddings of Jawerth-Franke type and in Section 7 embeddings  into unweighted function spaces. \medskip

\textbf{Notations.} Positive constants are denoted by $C$, and may vary from
line to line.  If $X,Y$ are Banach spaces, we write $X=Y$ if they coincide as
sets and have equivalent norms. For $p\in [1,\infty]$, the standard sequence
spaces are denoted by $\ell^p$.
\medskip

\textbf{Acknowledgement.} The authors thank Dorothee D. Haroske and Winfried Sickel for bringing the paper \cite{HaSkr} to their attention.

\section{Preliminaries}\label{prelim}

Here we collect the tools from harmonic analysis that are needed in the treatment of the weighted function spaces in the next section.

\subsection{Weights} A function $w:\R^d\to [0,\infty)$ is called a {\em weight} if $w$ is locally integrable
and if $\{w=0\}$ has Lebesgue measure zero. For $p\in[1,\infty)$ we denote by
$A_p$ the \emph{Muckenhoupt class} of weights, and $A_\infty =
\bigcup_{1\leq p< \infty} A_{p}$. In case $p\in (1,\infty)$ we have $w\in A_p$ if
$$\sup_{Q \text{ cubes in }\R^d} \left ( \frac{1}{|Q|} \int_Q w(x)\, dx\right) \left ( \frac{1}{|Q|} \int_Q w(x)^{-\frac{1}{p-1}} \, dx\right)^{p-1} <\infty.$$
We refer to \cite{GCRdF}, \cite[Chapter 9]{GraModern} and \cite[Chapter V]{Stein93} for
the general properties of these classes.

\begin{example}
Let $w$ be a power weight, i.e., $w(x) = |x|^{\gamma}$ with $\gamma\in \R$.
Then for $p\in (1,\infty)$ we have (see \cite[Example 9.1.7]{GraModern})
$$w\in A_p \quad \text{if and only if}\quad \gamma\in (-d, d(p-1)).$$
\end{example}

Let $(X,\|\cdot\|)$ be a Banach space. For a strongly measurable
function $f:\R^d\to X$ and $p\in [1, \infty)$ let
\[\|f\|_{L^p(\R^d,w;X)} := \Big(\int_{\R^d} \|f(x)\|^p w(x)\, dx\Big)^{1/p}.\]
For $p\in [1, \infty)$ we consider the Banach space
\[L^p(\R^d,w;X) := \{f\text{ strongly
measurable}\,:\,  \|f\|_{L^p(\R^d,w;X)}<\infty\},\] and set further
$L^\infty(\R^d,w;X) := L^\infty(\R^d;X)$.

\subsection{Maximal functions}
For $f\in L^1_{\text{loc}}(\R^d;X)$, let the Hardy-Littlewood maximal function $Mf$ be defined by
\[ (M f) (x) = \sup_{r>0} \frac{1}{|B(x,r)|} \int_{B(x,r)} \|f(y)\| \, dy, \qquad x\in \R^d,\]
where $B(x,r) = \{z\in \R^d: |z-x|<r\}$. For any weight $w:\R^d\to \R_+$ and $p\in (1, \infty)$, the maximal function $M$ is bounded on $L^p(\R^d,w)$ if and only if $w\in A_p$ (see \cite[Theorem V.3.1]{Stein93} and \cite[Theorem 9.1.9]{GraModern}). The following weighted vector-valued version of the Fefferman-Stein maximal inequality holds as well.
\begin{proposition}\label{prop:FSvectorvalued}
Let $X$ be a Banach space, $p\in (1, \infty)$, $q\in (1, \infty]$ and $w\in A_p$. Then there exists a constant $C_{p,q,w}$ such that for all $(f_k)_{k\geq 0} \subset L^p(\R^d,w;X)$ one has
\[\|(M f_k)_{k\geq 0}\|_{L^p(\R^d,w;\ell^q)}\leq C_{p,q,w} \|(f_k)_{k\geq 0}\|_{L^p(\R^d,w;\ell^q(X))}.\]
\end{proposition}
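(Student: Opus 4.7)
The plan is to reduce the vector-valued statement to its scalar counterpart by observing that $M$ only sees the pointwise norm $\|f(\cdot)\|$, and then to dispose of the resulting scalar weighted Fefferman--Stein inequality by a combination of endpoint arguments and Rubio de Francia's extrapolation theorem. By the very definition of $M$ recalled above the statement, one has $(Mf_k)(x) = M(\|f_k(\cdot)\|)(x)$ for every $k$, so setting $g_k(x) := \|f_k(x)\|$ the claim reduces to
\[
\|(Mg_k)_{k\geq 0}\|_{L^p(\R^d,w;\ell^q)} \leq C_{p,q,w}\, \|(g_k)_{k\geq 0}\|_{L^p(\R^d,w;\ell^q)}
\]
for non-negative scalar-valued sequences $(g_k)$. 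In this way the Banach space $X$ drops out of the problem entirely, no geometric hypothesis on $X$ is needed, and the constant is exactly the one from the scalar version.

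I would first settle the two easy endpoints. For $q=\infty$, the pointwise bound $g_k \leq \sup_j g_j$ together with the monotonicity of $M$ gives $\sup_k Mg_k \leq M(\sup_k g_k)$, and the scalar weighted maximal theorem (with $w\in A_p$) applied to $\sup_k g_k \in L^p(\R^d,w)$ concludes this case. For $q=p$, Fubini interchanges the $\ell^p$-sum with the spatial integral,
\[
\|(Mg_k)_k\|_{L^p(w;\ell^p)}^p = \sum_k \|Mg_k\|_{L^p(w)}^p \leq C^p \sum_k \|g_k\|_{L^p(w)}^p,
\]
again by the scalar weighted maximal inequality.

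For the remaining range $q\in(1,\infty)\setminus\{p\}$, which is the real substance of the proposition, I would invoke Rubio de Francia's extrapolation theorem: since $M$ is bounded on $L^r(\R^d,w)$ for every $r\in(1,\infty)$ and every $w\in A_r$ with constant depending only on $[w]_{A_r}$, the vector-valued extrapolation principle upgrades this scalar bound to the desired $L^p(w;\ell^q)$-estimate for all $p,q\in(1,\infty)$ and $w\in A_p$. I expect this last step to be the main obstacle, since the two endpoint cases do not interpolate to the full intermediate range in an elementary manner: the nonlinearity of $M$ blocks a direct Marcinkiewicz argument for $q>p$, and the range $q<p$ is not accessible by straightforward duality. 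One therefore has to appeal either to the extrapolation machinery or, equivalently, to a dedicated weighted Fefferman--Stein theorem (such as the one in the Garc\'ia-Cuerva--Rubio de Francia monograph cited in Section~\ref{prelim}).
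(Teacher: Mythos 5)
Your proof is correct and follows essentially the same route as the paper: reduce to the scalar case via the identity $Mf_k = M(\|f_k(\cdot)\|)$, treat $q=\infty$ separately by the pointwise bound $\sup_k Mg_k \le M(\sup_k g_k)$ together with the weighted maximal theorem, and appeal to the known scalar weighted Fefferman--Stein inequality for $1<q<\infty$. The only difference is how that scalar inequality is sourced: the paper cites Andersen--John and Kokila\v{s}vili directly, whereas you propose to obtain it by Rubio de Francia extrapolation from the $A_r$-boundedness of $M$ (or from the Garc\'{\i}a-Cuerva--Rubio de Francia monograph), which is an equally valid way to close that step.
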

\begin{proof}
For $q=\infty$ one uses that
\begin{equation}\label{2}
 \|M f_k(x)\|_{\ell^\infty(X)}\leq M \|f_k(x)\|_{\ell^\infty(X)}\qquad x\in \R^d, \qquad k\geq 0,
\end{equation}
and applies the boundedness of $M$ on $L^p(\R^d,w)$ to the function $f(x) = \|f_k(x)\|_{\ell^\infty(X)}$. If $1<q<\infty$ and $X= \R$, the result can be found in \cite[Theorem 3.1]{AJ80} and \cite{Koki}. The vector-valued case can be obtained from the scalar case by applying it to  $(\|f_k\|)_{k\geq 0}   \subset L^p(\R^d,w)$.
\end{proof}

For a given function $\varphi:\R^d\to \C$ and $t>0$, we define the function $\varphi_t:\R^d\to \C$ by
\[\varphi_{t}(x) := t^{n} \varphi(t x)\]
The following lemma is well-known to experts.
\begin{lemma}\label{lem:Stein}
Let $X$ be a Banach space, $p\in (1, \infty)$ and $w\in A_p$. For $\varphi\in L^1(\R^d)$, define $\psi:\R^d\to \R$ by
\[\psi(x) := \sup\{|\varphi(y)|: |y|\geq |x|\}.\]
If $\psi\in L^1(\R^d)$, then there is a constant $C_{p,w}$ such that for all $f\in L^p(\R^d,w;X)$ one has
\[\Big\|\sup_{t>0}\|\varphi_{t}* f\| \Big\|_{L^p(\R^d,w)}\leq C_{p,w}\|\psi\|_{L^1(\R^d)} \| f\|_{L^p(\R^d,w;X)}.\]
Moreover, for $q\in (1,\infty]$ there is a constant $C_{p,q,w}$ such that for all $(t_k)_{k\geq 0} \subset \R_+$ and $(f_k)_{k \geq 0} \subset L^p(\R^d,w;\ell^q(X))$ it holds
$$\|(\varphi_{t_k}* f_k)_{k\geq 0}\|_{L^p(\R^d,w; \ell^q(X))}\leq C_{p,q,w}\|\psi\|_{L^1(\R^d)} \|(f_k)_{k \geq 0}\|_{L^p(\R^d,w; \ell^q(X))}.$$
\end{lemma}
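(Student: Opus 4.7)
The plan is to reduce everything to a single pointwise domination by the Hardy--Littlewood maximal function and then invoke the weighted (and vector-valued) maximal inequalities already at our disposal. The key observation is that, by construction, $\psi$ is radially symmetric and non-increasing in $|x|$, and $\psi(x) \geq |\varphi(x)|$ pointwise.

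First I would establish the elementary estimate
\[
 \|\varphi_t * f(x)\|_X \;\leq\; \int_{\R^d} t^d |\varphi(ty)|\,\|f(x-y)\|_X\,dy \;\leq\; (\psi_t * \|f\|_X)(x),
\]
valid for every $t>0$ and a.e.\ $x$, using only $\psi \geq |\varphi|$. Next I would prove the pointwise maximal bound
\[
 \sup_{t>0}\,(\psi_t * g)(x) \;\leq\; \|\psi\|_{L^1(\R^d)}\, Mg(x),\qquad g \geq 0.
\]
For this, write the radial decreasing function $\psi$ via layer--cake as $\psi = \int_0^\infty \mathbf{1}_{B(0,r(s))}\,ds$, where $\{\psi > s\} = B(0,r(s))$; scaling gives $\psi_t = \int_0^\infty \mathbf{1}_{B(0,r(s)/t)}\cdot t^d\,ds$. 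After a change of variable $u = s/t^d$ and using $(\mathbf{1}_{B(0,R)} * g)(x) \leq |B(0,R)| Mg(x)$, Fubini produces exactly $\|\psi\|_{L^1} Mg(x)$, with the $t$--dependence cancelling.

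Having the pointwise bound $\sup_{t>0}\|\varphi_t * f(x)\|_X \leq \|\psi\|_{L^1} M(\|f\|_X)(x)$, the first inequality follows by taking the $L^p(\R^d,w)$-norm and invoking the boundedness of $M$ on $L^p(\R^d,w)$ for $w\in A_p$. For the second inequality I would apply the pointwise bound to each $f_k$ to obtain
\[
 \|\varphi_{t_k}*f_k(x)\|_X \;\leq\; \|\psi\|_{L^1(\R^d)}\, M(\|f_k\|_X)(x), \qquad k\geq 0,
\]
so that
\[
 \bigl\|(\varphi_{t_k}*f_k(x))_{k}\bigr\|_{\ell^q(X)} \;\leq\; \|\psi\|_{L^1(\R^d)}\, \bigl\|(M\|f_k\|_X)_{k}\bigr\|_{\ell^q},
\]
and then apply the scalar Fefferman--Stein inequality from Proposition~\ref{prop:FSvectorvalued} (with $X=\R$) to the sequence $(\|f_k\|_X)_{k\geq 0}$. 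For the case $q=\infty$ the same inequality also works, using in addition that $\sup_k Mg_k \leq M(\sup_k g_k)$, which is immediate from the definition of $M$.

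The only mildly technical step is the layer--cake reduction establishing $\sup_{t>0}(\psi_t*g) \leq \|\psi\|_{L^1}\,Mg$; everything else is bookkeeping. I expect no serious obstacle, since all the deep inputs (weighted boundedness of $M$, weighted vector-valued Fefferman--Stein) are available as Proposition~\ref{prop:FSvectorvalued} and the standard weighted maximal theorem.
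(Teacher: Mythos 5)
Your proposal is correct and follows essentially the same route as the paper: dominate $\|\varphi_t*f\|$ pointwise by $\psi_t*\|f\|$, use that the radially decreasing majorant satisfies $\sup_{t>0}\psi_t*g\le\|\psi\|_{L^1}Mg$, and then invoke the weighted boundedness of $M$ and the (scalar) weighted Fefferman--Stein inequality of Proposition \ref{prop:FSvectorvalued}, with the inequality $\sup_k Mg_k\le M(\sup_k g_k)$ handling $q=\infty$. The only difference is that you prove the maximal-function domination for radial decreasing kernels via the layer-cake decomposition, whereas the paper simply cites it from Grafakos (Theorem 2.1.10 of \emph{Classical Fourier Analysis}); your derivation of that standard fact is correct.
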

A partial converse of the lemma holds as well: if for a positive radial decreasing function $\varphi\neq 0$, one has $\|\varphi_{t}* f\|_{L^p(\R^d,w)}\leq C \|f\|_{L^p(\R^d,w)}$ with a constant $C$ independent of $f$ and $t>0$, then $w\in A_p$
(see \cite[p. 198]{Stein93}).
\begin{proof}[Proof of Lemma \ref{lem:Stein}]
Let $g(x) = \|f(x)\|$. By definition of $\psi$ and \cite[Theorem 2.1.10]{GraClass}, for  $x\in \R^d$ and $t>0$ one has
\begin{align}
\label{eq:Stein}\|\varphi_{t}* f(x)\|& \leq \int_{\R^d} |\varphi_t(y)| \, \|f(x-y)\| \, dy \leq \int_{\R^d} \psi_t(y) \, g(x-y) \, dy\\
& \leq \|\psi_t\|_{L^1(\R^d)} M g(x) =\|\psi\|_{L^1(\R^d)} M g(x).\nonumber
\end{align}
Now the first asserted inequality follows from the boundedness of $M$ on $L^p(\R^d,w)$. Further, for all $q\in (1,\infty]$ inequality \eqref{eq:Stein} implies that
$$\|(\varphi_{t_k}* f_k(x))_{k\geq 0}\|_{\ell^q(X)}\leq \|\psi\|_{L^1(\R^d)} \|(M f_k(x))_{k\geq 0}\|_{\ell^q(X)}, \qquad x\in \R^d.$$
Thus the second assertion for $q\in (1,\infty)$ follows from Proposition \ref{prop:FSvectorvalued}. The case $q=\infty$ is a consequence of \eqref{2} and the boundedness of $M$ on $L^p(\R^d,w)$.
\end{proof}

\subsection{A multiplier theorem} Let $X$ be a Banach space, and let $\Schw(\R^d;X)$ be the space of $X$-valued Schwartz functions. We write $\Schw(\R^d)$ in the scalar case $X=\C$. The Fourier transform of a function $f\in \Schw(\R^d;X)$ is given by
\[ \F f(\xi) =  \wh{f}(\xi) =    \frac1{(2\pi)^{d/2}}\int_{\R^d} f(x)e^{-ix\cdot\xi}\,dx, \qquad \xi\in\R^d.\]
Recall that $\F$ is a continuous isomorphism on  $\Schw(\R^d;X)$. A linear map $f:\Schw(\R^d)\to X$ is called an \emph{$X$-valued tempered distribution}, if for all $\phi\in \Schw(\R^d)$, there are a constant $C$ and $k, N\in \N$  such that
\[\|f(\phi)\|\leq C \sup_{x\in \R^d}\sup_{|\alpha|\leq N} (1+|x|)^k |D^{\alpha}\phi(x)|.\]
The space of all $X$-valued tempered distributions is denoted  by $\TD(\R^d;X)$. Standard operators (Fourier transform, convolution, etc.) on $\TD(\R^d;X)$ can be defined as in the scalar case, cf. \cite[Section III.4]{Ama95}.

The following result is an extension of \cite[Theorem 1.6.3]{Tri83} and \cite[Formula 15.3(iv)]{Tr97} to the weighted vector-valued setting. For a compact set $K\subseteq \R^d$, let $L^p_K(\R^d,w)$ be the space of functions $f\in L^p(\R^d,w)$ with $\supp(\wh{f})\subseteq K$ in the sense of distributions.

\begin{proposition}\label{prop:weightedmultiplier}
Let $X$ be a Banach space, $p\in [1, \infty)$, $q\in [1, \infty]$, $r\in (0,\min\{p,q\})$ and $w\in A_{p/r}$.
Let $K_0, K_1, \ldots \subset \R^d$ be compact sets with $\theta_n = \text{diam}(K_n)>0$ for all $n$. Then there is a constant $C_{p,q,r,w}$ such that for all $(m_n)_{n\geq 0} \subset L^\infty(\R^d)$ and $(f_n)_{n\geq 0} \subset L^p_{K_n}(\R^d,w;X)$  one has
\begin{align*}
\|(\F^{-1} & [m_n \wh{f}_n])_{n\geq 0}\|_{L^p(\R^d,w;\ell^q(X))} \\ & \leq C_{p,q,d,r,w}\, \sup_{k\geq 0} \|(1+|\cdot|^{d/r}) \F^{-1}[m_k(\theta_k \cdot)]\|_{L^1(\R^d)} \|(f_n)_{n\geq 0}\|_{L^p(\R^d,w;\ell^q(X))}
\\ & \leq C_{p,q,d,r,w,\lambda} \,\sup_{k\geq 0} \|m_k(\theta_k \cdot)\|_{H^{\lambda,2}(\R^d)} \|(f_n)_{n\geq 0}\|_{L^p(\R^d,w;\ell^q(X))},
\end{align*}
where $\lambda> \frac{d}{2} + \frac{d}{r}$.
\end{proposition}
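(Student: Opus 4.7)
The plan is to reduce the estimate to the weighted vector-valued Fefferman-Stein inequality of Proposition~\ref{prop:FSvectorvalued} by means of a Peetre-type maximal function bound for band-limited functions. Write $k_n := \F^{-1}[m_n]$, so that $\F^{-1}[m_n \wh{f}_n] = k_n \ast f_n$. Since $K_n \subseteq B(\xi_n, \theta_n)$ for some $\xi_n \in \R^d$, the modulated function $e^{-ix\cdot \xi_n} f_n(x)$ has Fourier support in the centered ball $B(0, \theta_n)$ while its pointwise norm coincides with $\|f_n(x)\|$. Applying the scalar Peetre maximal inequality to the non-negative function $\|f_n(\cdot)\|$ then produces, for every $r \in (0, \min\{p, q\})$,
\begin{equation*}
\sup_{y \in \R^d} \frac{\|f_n(x-y)\|}{(1+\theta_n|y|)^{d/r}} \leq C_r \bigl(M(\|f_n\|^r)(x)\bigr)^{1/r}, \qquad x \in \R^d,
\end{equation*}
with $C_r$ independent of $n$.

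Plugging this pointwise bound into the trivial convolution estimate
\begin{equation*}
\|k_n \ast f_n(x)\| \leq \int_{\R^d} |k_n(y)|\,(1+\theta_n|y|)^{d/r} \cdot \frac{\|f_n(x-y)\|}{(1+\theta_n|y|)^{d/r}}\, dy
\end{equation*}
yields $\|k_n \ast f_n(x)\| \leq C_r \bigl\|(1+\theta_n|\cdot|)^{d/r} k_n\bigr\|_{L^1} \bigl(M(\|f_n\|^r)(x)\bigr)^{1/r}$. The substitution $z = \theta_n y$ identifies $\bigl\|(1+\theta_n|\cdot|)^{d/r} k_n\bigr\|_{L^1}$ with $\bigl\|(1+|\cdot|)^{d/r} \F^{-1}[m_n(\theta_n \cdot)]\bigr\|_{L^1}$, which agrees with $\bigl\|(1+|\cdot|^{d/r}) \F^{-1}[m_n(\theta_n \cdot)]\bigr\|_{L^1}$ up to a constant depending only on $d$ and $r$. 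Taking $\ell^q$-norm in $n$ and then $L^p(w)$-norm, the identity
\begin{equation*}
\bigl\|\bigl(M(\|f_n\|^r)^{1/r}\bigr)_n\bigr\|_{L^p(w;\ell^q)} = \bigl\|\bigl(M(\|f_n\|^r)\bigr)_n\bigr\|_{L^{p/r}(w;\ell^{q/r})}^{1/r}
\end{equation*}
combined with the hypotheses $p/r, q/r > 1$ and $w \in A_{p/r}$ allows Proposition~\ref{prop:FSvectorvalued} to control the right-hand side by $\|(f_n)_n\|_{L^p(w;\ell^q(X))}^r$. This establishes the first inequality.

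For the passage to the $H^{\lambda,2}$-bound, Cauchy-Schwarz applied to the decomposition $(1+|x|^{d/r}) \leq C (1+|x|)^\lambda \cdot (1+|x|)^{d/r-\lambda}$ gives, for $g = \F^{-1}[m_k(\theta_k \cdot)]$,
\begin{equation*}
\bigl\|(1+|\cdot|^{d/r}) g\bigr\|_{L^1} \leq C \bigl\|(1+|\cdot|)^\lambda g\bigr\|_{L^2} \cdot \bigl\|(1+|\cdot|)^{d/r-\lambda}\bigr\|_{L^2},
\end{equation*}
the second factor being finite precisely because $\lambda > d/r + d/2$. Since multiplication by $(1+|x|^2)^{\lambda/2}$ in physical space corresponds under $\F$ to $(1-\Delta)^{\lambda/2}$ in frequency, Plancherel yields $\bigl\|(1+|\cdot|^2)^{\lambda/2} g\bigr\|_{L^2} = \|\wh{g}\|_{H^{\lambda,2}} = \|m_k(\theta_k \cdot)\|_{H^{\lambda,2}}$, and taking $\sup_k$ closes the second inequality. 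The main technical obstacle is the Peetre maximal inequality in the vector-valued setting with off-centered spectrum; this is handled transparently by the modulation trick together with applying the scalar Peetre estimate to $\|f_n(\cdot)\|$, so that no geometric hypotheses on $X$ are incurred.
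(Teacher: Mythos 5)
Your proof takes essentially the same route as the paper's: the published argument simply defers to \cite[Theorems 1.6.2 and 1.6.3]{Tri83}, whose content is precisely the Peetre maximal estimate combined with the weighted Fefferman--Stein inequality of Proposition \ref{prop:FSvectorvalued}, which is what you write out; your dilation identity for $\|(1+\theta_n|\cdot|)^{d/r}\F^{-1}[m_n]\|_{L^1}$, the $\ell^q$--$L^p(w)$ bookkeeping with exponents $p/r,q/r>1$, and the Cauchy--Schwarz/Plancherel passage to the $H^{\lambda,2}$-bound are all correct.

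One step is misstated, however. You cannot literally ``apply the scalar Peetre maximal inequality to the non-negative function $\|f_n(\cdot)\|$'': taking the norm destroys the compact Fourier support, so the hypothesis of the scalar inequality fails for $\|f_n(\cdot)\|$ and the deduction as written is invalid. What is true, and what you need, is that the \emph{proof} of the Peetre estimate goes through verbatim for $X$-valued band-limited $f_n$, because each of its steps --- the reproducing identity $f_n = c\, f_n * \psi_n$ with $\wh{\psi}_n \equiv 1$ on a ball containing $\supp \wh{f}_n$, the consequent bound on $\nabla f_n = c\, f_n * \nabla\psi_n$, and the mean-value iteration yielding $f_n^*(x)\leq C(M(\|f_n\|^r)(x))^{1/r} + \tfrac12 f_n^*(x)$ --- only ever manipulates the scalar quantities $\|f_n(z)\|$ and $\|\nabla f_n(z)\|$; this is exactly why no geometric hypothesis on $X$ is incurred. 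Your modulation trick does correctly reduce the off-centered spectrum to the centered case (it changes neither $\|f_n(x)\|$ nor $M(\|f_n\|^r)$). With that repair the argument is complete and coincides with the paper's intended proof.
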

By considering only $m_1 \neq 0$ and $f_1\neq 0$, one obtains a multiplier theorem on $L^p_{K_1}(\R^d)$.

\begin{proof}
This can be proved as in \cite[Theorem 1.6.3]{Tri83} (see also \cite[Section 15.3]{Tr97}). Indeed, using Proposition \ref{prop:FSvectorvalued} one can extend \cite[Theorem 1.6.2]{Tri83} to the vector-valued setting with weights in $A_{p/r}$. Now the argument in \cite[Theorem 1.6.3]{Tri83} can be repeated to obtain the result.
\end{proof}

\section{Definitions and properties of weighted function spaces}\label{defpropspaces}

In this section we define and investigate the basic properties of function spaces with general $A_\infty$-weights. The definitions extend those of \cite{Bui82} to the vector-valued setting. In \cite{Ry01} it is shown that one actually only needs a local version of this condition, called $A_\infty^{\rm loc}$, to obtain reasonable spaces. Since we are mainly interested in power weights $w(x) = |x|^{\gamma}$, for which one easily sees that $w\in A_\infty$ if and only if $w\in A_{\infty}^{{\rm loc}}$, we restrict ourselves to the $A_\infty$-case.

The arguments employed for the basic properties of the spaces are well-known and completely analogous to the unweighted, scalar-valued case (see e.g. \cite{SchmSiunpublished, Tri83}). We thus refrain from giving too many details and rather refer to the literature at most of the points.

\subsection{Besov and Triebel-Lizorkin spaces\label{subsec:defBF}}

\begin{definition}\label{def:Phi}
Let $\varphi \in \Schw(\R^d)$ be such that
\begin{equation}\label{eq:propvarphi}
0\leq \wh{\varphi}(\xi)\leq 1, \quad  \xi\in \R^d, \qquad  \wh{\varphi}(\xi) = 1 \ \text{ if } \ |\xi|\leq 1, \qquad  \wh{\varphi}(\xi)=0 \ \text{ if } \ |\xi|\geq \frac32.
\end{equation}
Let $\wh{\varphi}_0 = \wh{\varphi}$, $\wh{\varphi}_1(\xi) = \wh{\varphi}(\xi/2) - \wh{\varphi}(\xi)$ and
\[\wh{\varphi}_k(\xi) = \wh{\varphi}_1(2^{-k+1} \xi) = \wh{\varphi}(2^{-k}\xi) - \wh{\varphi}(2^{-k+1}\xi),  \qquad \xi\in \R^d, \qquad  k\geq 1.\]
Let $\Phi$ be the set of all sequences $(\varphi_n)_{n\geq 0}$ constructed in the above way from a function $\varphi$ that satisfies \eqref{eq:propvarphi}.
\end{definition}

For  $\varphi$ as in the definition and $f\in \TD(\R^d;X)$ one sets
\[S_k f := \varphi_k * f = \F^{-1} [\wh{\varphi}_k \wh{f}],\]
which belongs to $C^\infty(\R^d;X)\cap \TD(\R^d;X)$ (see \cite[Remark 4.3.3]{Schm}). Since $\sum_{k\geq 0} \wh{\varphi}_k(\xi) = 1$ for all $\xi\in \R^d$, we have $\sum_{k\geq 0} S_kf =  f$ in the sense of distributions.

\begin{definition}\label{def:Besov}
Let $X$ be a Banach space, $p,q\in [1,\infty]$, $s\in\R$ and $w\in A_{\infty}$. The {\em Besov space} $B_{p,q}^s(\R^d,w;X)$ is defined as the space of all
$f\in {\mathscr S}'(\R^d;X)$ for which
\[ \|f\|_{B_{p,q}^s (\R^d,w;X)} := \Big\| \big( 2^{ks}S_k f\big)_{k\geq 0} \Big\|_{\ell^q(L^p(\R^d,w;X))} < \infty.\]
Moreover, if $s\in \R_+ \backslash \N$, then we set
$$W^{s,p}(\R^d,w;X) := B_{p,p}^s (\R^d,w;X).$$
\end{definition}

\begin{definition}\label{def:Triebel-Lizorkin}
Let $X$ be a Banach space, $p\in [1,\infty)$, $q\in [1,\infty]$, $s\in\R$ and $w\in A_{\infty}$. The {\em Triebel-Lizorkin space} $F_{p,q}^s(\R^d,w;X)$ is defined as the space of all $f\in {\mathscr S}'(\R^d;X)$ for which
\[ \|f\|_{F_{p,q}^s (\R^d,w;X)} := \Big\| \big( 2^{ks}S_k f\big)_{k\ge 0} \Big\|_{L^p(\R^d,w;\ell^q(X))}<\infty.\]
\end{definition}

If $w\equiv 1$, we write $B_{p,q}^s(\R^d;X)$ for $B_{p,q}^s(\R^d,w;X)$ and $F_{p,q}^s(\R^d;X)$ for $F_{p,q}^s(\R^d,w;X)$. As in the scalar case, one can show that these are Banach spaces.
Note that
\begin{align*}
\|f\|_{B_{p,q}^s (\R^d,w;X)} &= \Big(\sum_{k\geq 0} 2^{ks}\|S_kf\|_{L^p(\R^d,w;X)}^q\Big)^{1/q},\\
\|f\|_{F_{p,q}^s (\R^d,w;X)}  &  = \Big\|\Big(\sum_{k\geq 0} 2^{ks}\|S_kf\|^q\Big)^{1/q}\Big\|_{L^p(\R^d,w)},
\end{align*}
with the usual modifications for $q = \infty$.

The following is a consequence of Proposition \ref{prop:weightedmultiplier}.
\begin{proposition}[Independence of $\varphi$]\label{prop:independencevarphi}
Let $X$ be a Banach space and $w\in A_\infty$.
\begin{enumerate}[(1)]
\item For all $s\in \R$ and $p,q\in [1, \infty]$, the space $B^{s}_{p,q}(\R^d,w;X)$ is independent of the choice $(\varphi_n)_{n\geq 0}\in \Phi$. Any  $(\psi_n)_{n\geq 0}\in \Phi$ leads to an equivalent norm in $B^{s}_{p,q}(\R^d,w;X)$.
\item For all $s\in \R$, $p\in [1, \infty)$ and $q\in [1, \infty]$, the space $F^{s}_{p,q}(\R^d,w;X)$ is independent of the choice $(\varphi_n)_{n\geq 0}\in \Phi$. Any $(\psi_n)_{n\geq 0}\in \Phi$ leads to an equivalent norm in $F^{s}_{p,q}(\R^d,w;X)$.
\end{enumerate}
\end{proposition}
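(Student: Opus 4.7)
The plan is to apply the multiplier estimate of Proposition \ref{prop:weightedmultiplier} to compare the operators $S_k^\varphi f:=\varphi_k*f$ and $S_k^\psi f:=\psi_k*f$ arising from two systems $(\varphi_n)_{n\geq 0},(\psi_n)_{n\geq 0}\in\Phi$. The central estimate I aim to establish is, with $C$ independent of $k\geq 0$,
$$\|S_k^\psi f\|_{L^p(\R^d,w;X)}\;\leq\;C\sum_{j=\max(k-1,0)}^{k+1}\|S_j^\varphi f\|_{L^p(\R^d,w;X)},$$
together with the vector-valued $L^p(\R^d,w;\ell^q(X))$ analogue for the families $(S_k^\psi f)_{k\geq 0}$. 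After multiplication by $2^{ks}$, taking $\ell^q$- or $L^p$-norms in $k$ costs at most a factor $2^{|s|+1}$ from the three-term index shift; one direction of the equivalence follows, and the reverse comes by symmetry.

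My first step would be a Fourier-support analysis. By Definition \ref{def:Phi}, $\wh{\varphi}_k$ and $\wh{\psi}_k$ are both supported in $\{2^{k-1}\leq|\xi|\leq 3\cdot 2^{k-1}\}$ for $k\geq 1$ and in $\{|\xi|\leq 3/2\}$ for $k=0$. Since $\sum_{j\geq 0}\wh{\varphi}_j\equiv 1$, only the indices $j\in\{k-1,k,k+1\}\cap\N$ contribute on $\supp\wh{\psi}_k$, hence
$$\wh{\psi}_k\,=\,\wh{\psi}_k\bigl(\wh{\varphi}_{k-1}+\wh{\varphi}_k+\wh{\varphi}_{k+1}\bigr)\quad(k\geq 1),$$
with the obvious two-term version for $k=0$. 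This decomposes $S_k^\psi f$ into at most three terms of the form $\psi_k*(S_j^\varphi f)$, each with Fourier support in a compact set $K_j$ of diameter $\theta_j$ comparable to $2^j$.

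My second step would be to apply Proposition \ref{prop:weightedmultiplier} with $m_k=\wh{\psi}_k$ to these pieces. Since $w\in A_\infty=\bigcup_{p\geq 1}A_p$, one may pick $r\in(0,\min\{p,q\})$ with $w\in A_{p/r}$, as needed. The crucial input is that $\|\wh{\psi}_k(\theta_k\,\cdot)\|_{H^{\lambda,2}(\R^d)}$ is bounded uniformly in $k$: choosing $\theta_k=2^{k-1}$ for $k\geq 1$, the self-similar relation $\wh{\psi}_k(\xi)=\wh{\psi}_1(2^{-k+1}\xi)$ from Definition \ref{def:Phi} yields $\wh{\psi}_k(\theta_k\,\cdot)=\wh{\psi}_1(\cdot)$, a single fixed Schwartz symbol, so its Sobolev norm is one constant; $k=0$ is a separate but still fixed case. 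For $p=\infty$ in Definition \ref{def:Besov}, which is not covered by Proposition \ref{prop:weightedmultiplier}, a direct Young's inequality argument suffices, since $\|\psi_k\|_{L^1(\R^d)}=\|\psi_1\|_{L^1(\R^d)}$ for $k\geq 1$ by scaling.

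Assembling the pointwise-in-$k$ bounds and taking $\ell^q(L^p)$-norms gives part (1); taking $L^p(\ell^q)$-norms while invoking the family-valued version of Proposition \ref{prop:weightedmultiplier} gives part (2). The only non-routine point, and where I would focus attention, is the uniformity of the multiplier constant across $k$; this is handled precisely by the self-similar scaling built into Definition \ref{def:Phi}, so I expect no genuine obstacle beyond careful Littlewood-Paley bookkeeping.
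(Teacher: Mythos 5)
Your argument is correct and follows essentially the same route as the paper, whose proof simply chooses $r\in(0,\min\{p,q\})$ with $w\in A_{p/r}$ and invokes Proposition \ref{prop:weightedmultiplier} together with the classical unweighted argument of Triebel; your decomposition $\wh{\psi}_k=\wh{\psi}_k(\wh{\varphi}_{k-1}+\wh{\varphi}_k+\wh{\varphi}_{k+1})$ and the scaling-based uniformity of the symbol norms are exactly that argument written out. The only cosmetic point is that $\theta_k$ in Proposition \ref{prop:weightedmultiplier} is the \emph{diameter} of the support annulus (comparable to $2^k$ rather than equal to $2^{k-1}$), which changes nothing since the rescaled symbol is still a fixed dilate of $\wh{\psi}_1$; your separate Young's-inequality treatment of $p=\infty$ for the $B$-scale is a detail the paper leaves implicit.
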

\begin{proof} Choose $r\in (0,\min\{p,q\})$ such that $w\in A_{p/r}$. Then one can apply Proposition \ref{prop:weightedmultiplier} in the same way as in the unweighted case in \cite[Section 2.3.2]{Tri83} and \cite[Section 15.5]{Tr97}.
\end{proof}

\begin{remark}\
\begin{enumerate}[(i)]
\item At a technical point in the proofs, $w\in A_{\infty}$ is required to have the boundedness of the Hardy-Littlewood maximal function and of the Fefferman-Stein maximal function in some $L^r$-space and some $L^r(\ell^q)$-space with $r\in (1,\infty)$, respectively.

\item Definition \ref{def:Triebel-Lizorkin} can be extended to $p=\infty$ and $q\in [1, \infty]$, but Proposition \ref{prop:independencevarphi} is not true
in this setting (see \cite[Remark 2.3.1/4]{Tri83}).

\item Rychkov \cite{Ry01} considers scalar $B$- and $F$-spaces with more general weights of class $A_\infty^\text{loc}$, i.e., satisfying only a local $A_\infty$ condition.
\end{enumerate}
\end{remark}

\subsection{Sobolev and Bessel-potential spaces\label{sub:Bessel}}
\begin{definition}
Let $X$ be a Banach space, $m\in \N$, $p\in [1, \infty]$ and let $w$ be a weight. The \emph{Sobolev space} $W^{m,p}(\R^d,w;X)$ is defined as the space of functions $f\in L^p(\R^d;w,X)$ for which $D^{\alpha} f$, taken in a distributional sense, is in $L^p(\R^d,w;X)$ for all multiindices $\alpha$ with $|\alpha|\leq m$. Let
\[\|f\|_{W^{m,p}(\R^d,w;X)} := \sum_{|\alpha|\leq m} \|D^{\alpha} f\|_{L^p(\R^d,w;X)}.\]
\end{definition}

For $s\in \R$ and $f\in \TD(\R^d;X)$, the Bessel potential  $J_{s} f\in \TD(\R^d;X)$ is defined by
\[J_{s} f := \F^{-1} [(1+|\cdot|^2)^{s/2} \wh{f}].\]
Obviously, $J_{s_1} J_{s_2} = J_{s_1 + s_2}$ for $s_1,s_2\in \R$, and $J_0$ is the identity mapping on $\TD(\R^d;X)$.

\begin{definition}
Let $X$ be a Banach space, $s\in \R$, $p\in [1, \infty)$ and let $w$ be a weight. Let the \emph{Bessel-potential space} $H^{s,p}(\R^d,w;X)$ be defined as the space of all $f\in \TD(\R^d;X)$ for which
$J_s f\in L^p(\R^d,w;X)$. Let
\[\|f\|_{H^{s,p}(\R^d,w;X)} := \|J_s f\|_{L^p(\R^d,w;X)}.\]
\end{definition}
It is immediate from the definition that
\begin{equation}\label{eq:JsigmaH}
 J_\sigma:H^{s,p}(\R^d,w;X) \to H^{s-\sigma,p}(\R^d,w;X) \quad \text{isomorphically}.
\end{equation}
Moreover, $W^{0,p}(\R^d,w;X) = H^{0,p}(\R^d,w;X) = L^p(\R^d,w;X)$. Certain
embeddings and identities between these spaces and
Triebel-Lizorkin spaces hold under geometric assumptions on $X$, see \eqref{H=F} and \eqref{H=W}.

\subsection{Density, lifting property, equivalent norms}

The elementary properties of the $A_\infty$-weighted spaces are the same as in the unweighted case.  Proposition \ref{prop:weightedmultiplier} allows to carry over the proofs of \cite[Section 2.3]{Tri83} and \cite[Section 15]{Tr97} to the weighted setting.

\begin{lemma}\label{density}
Let $X$ be a Banach space, $p\in (1,\infty)$, $q\in [1,\infty)$, $s\in \R$. Let $w\in A_{\infty}$.
The set $\Schw(\R^d;X)$ is dense in $B_{p,q}^s(\R^d,w;X)$, $F_{p,q}^s(\R^d,w;X)$ and $H^{s,p}(\R^d,w;X)$.
\end{lemma}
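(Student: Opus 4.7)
For the $H^{s,p}$-case, I plan to reduce to density of $\Schw(\R^d;X)$ in $L^p(\R^d,w;X)$ via the lifting isomorphism \eqref{eq:JsigmaH}. The operator $J_s$ restricts to a bijection on $\Schw(\R^d;X)$ (Fourier multiplication by the smooth polynomially-bounded symbol $(1+|\cdot|^2)^{s/2}$), so a Schwartz approximation of $J_sf\in L^p(\R^d,w;X)$ gives one of $f\in H^{s,p}(\R^d,w;X)$. Density of $\Schw(\R^d;X)$ in $L^p(\R^d,w;X)$ for $w\in A_\infty$ and $p<\infty$ is standard: local integrability of $w$ allows one to approximate $X$-valued $L^p$-functions by simple functions supported on sets of finite $w$-measure, then by $C_c^\infty(\R^d;X)\subset\Schw(\R^d;X)$ via a truncation-and-mollification argument.

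For the $B$- and $F$-cases, I work with the partial sums $f^N := \sum_{k=0}^N S_k f = \varphi^N * f$, where $\wh{\varphi^N}(\xi) = \wh\varphi(2^{-N}\xi)$ is a rescaled Schwartz function by telescoping. First I show $f^N\to f$ in the respective norm. By the annular Fourier supports of the $\varphi_k$, the operator $S_j S_k$ vanishes whenever $|j-k|$ exceeds a fixed constant $c$; hence $S_j(f-f^N) = 0$ for $j\leq N-c$, and a finite-term triangle estimate for larger $j$ yields
\[\|f-f^N\|_{B^s_{p,q}(\R^d,w;X)}^q \leq C\sum_{k>N-c}2^{ksq}\|S_kf\|_{L^p(\R^d,w;X)}^q,\]
which tends to $0$ as $N\to\infty$ since $q<\infty$. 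The pointwise-in-$x$ analog combined with dominated convergence in $L^p(\R^d,w)$ handles $F^s_{p,q}$.

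The main obstacle is the second step, approximating each band-limited $f^N\in L^p(\R^d,w;X)$ by Schwartz functions in the $B$- or $F$-norm. The plan: select Schwartz approximants $\tilde f_j\to f^N$ in $L^p(\R^d,w;X)$ (using the density from the first part) and set $g_j := \varphi^{N+1}*\tilde f_j\in\Schw(\R^d;X)$ (convolution of two Schwartz functions). Since $\wh{\varphi^{N+1}}\equiv 1$ on the Fourier support $K := B(0,\tfrac32\cdot 2^N)$ of $f^N$, we have $\varphi^{N+1}*f^N = f^N$, so $g_j - f^N = \varphi^{N+1}*(\tilde f_j-f^N)$ is band-limited with Fourier support in $B(0,\tfrac32\cdot 2^{N+1})$. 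Choosing $r>0$ so small that $w\in A_{p/r}$ (available since $w\in A_\infty$), a Nikolskii-type norm equivalence for band-limited functions (via Proposition \ref{prop:weightedmultiplier} applied with a single frequency block) together with $L^p(\R^d,w;X)$-boundedness of convolution with the Schwartz function $\varphi^{N+1}$ yields
\[\|g_j - f^N\|_{B^s_{p,q}(\R^d,w;X)} \leq C_N\|\tilde f_j - f^N\|_{L^p(\R^d,w;X)} \to 0,\]
and analogously in $F^s_{p,q}(\R^d,w;X)$. The chief technical point lies in coordinating these weighted multiplier estimates, where exploiting the band-limitation of $g_j-f^N$ is essential.
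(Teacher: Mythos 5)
Your reduction of the $H^{s,p}$-case to density in $L^p(\R^d,w;X)$ via the lifting $J_s$ is exactly the paper's argument, and your first step for the $B$- and $F$-cases (convergence of the partial sums $f^N=\sum_{k=0}^N S_kf$ via the almost-orthogonality $S_jS_k=0$ for $|j-k|>c$ and the weighted multiplier estimates) also matches the paper. The gap is in your second step. After correctly observing that $g_j-f^N$ is band-limited in a ball depending only on $N$, so that Proposition \ref{prop:weightedmultiplier} gives $\|g_j-f^N\|_{B^s_{p,q}(\R^d,w;X)}\le C_N\|g_j-f^N\|_{L^p(\R^d,w;X)}$, you need
\[
\|\varphi^{N+1}*(\tilde f_j-f^N)\|_{L^p(\R^d,w;X)}\le C\,\|\tilde f_j-f^N\|_{L^p(\R^d,w;X)},
\]
i.e.\ boundedness of convolution with a fixed Schwartz function on all of $L^p(\R^d,w;X)$. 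This is available (Lemma \ref{lem:Stein}) only for $w\in A_p$, whereas the lemma assumes only $w\in A_\infty$, and it genuinely fails outside $A_p$: for $w(x)=|x|^{\gamma}$ with $\gamma>d(p-1)$ (so $w\in A_\infty\setminus A_p$), the functions $h_\e=\e^{-d}\one_{B(0,\e)}$ satisfy $\|h_\e\|_{L^p(\R^d,w)}^p\sim\e^{\,d+\gamma-dp}\to 0$ while $\varphi^{N+1}*h_\e\to |B(0,1)|\,\varphi^{N+1}\neq 0$ pointwise, so no such constant $C$ exists. You cannot rescue this by band-limitedness, because $\tilde f_j-f^N$ is \emph{not} band-limited (your $\tilde f_j$ are arbitrary Schwartz approximants), and it is precisely the band-limitation of the \emph{input} that lets Proposition \ref{prop:weightedmultiplier} replace $A_p$ by $A_{p/r}$.

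The paper sidesteps this by never convolving a non-band-limited function: it approximates $f_N$ by $\eta(\delta\cdot)f_N$ with $\eta\in\Schw(\R^d)$, $\eta(0)=1$ and $\supp\widehat\eta\subset B(0,1)$. Then $\eta(\delta\cdot)f_N\in\Schw(\R^d;X)$, the difference $f_N-\eta(\delta\cdot)f_N$ has Fourier support in a ball depending only on $N$ (uniformly in $\delta\in(0,1)$), so Proposition \ref{prop:weightedmultiplier} controls its $B$- or $F$-norm by its $L^p(\R^d,w;X)$-norm with a $\delta$-independent constant, and the latter tends to $0$ by dominated convergence. To keep your scheme you would instead have to choose the $\tilde f_j$ with Fourier supports in a fixed compact set and prove such functions are dense in $L^p(\R^d,w;X)$ for every $w\in A_\infty$ (the paper establishes this only for power weights, Lemma \ref{lem:densityFouriercompact}); as written, your argument proves the lemma only for $w\in A_p$.
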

\begin{proof}
Let us consider $F_{p,q}^s(\R^d,w;X)$. Using Proposition \ref{prop:weightedmultiplier}, the same arguments as in Step 5 of the proof of \cite[Theorem 2.3.3]{Tri83} show that $f_N = \sum_{k=0}^N S_k f$ converges to $f$ in $F_{p,q}^s(\R^d,w;X)$ as $N\to \infty$. Still following \cite{Tri83}, let $\eta\in \Schw(\R^d)$ with $\eta(0) = 1$ and $\text{supp}(\hat{\eta}) \subset B(0,1)$.  Since $\F \eta(\delta\cdot) = \delta^{-d} \hat{\eta}(\delta^{-1}\cdot)$,  the support of $\F(\eta(\delta\cdot) f_N)$ is for all $\delta\in (0,1)$ contained in a ball that only depends on $N$. Applying again Proposition \ref{prop:weightedmultiplier}, we obtain that there is $C>0$, independent of $\delta$, such that
$$\|f_N-\eta(\delta\cdot) f_N\|_{F_{p,q}^s(\R^d,w;X)}  \leq C \|f_N-\eta(\delta\cdot) f_N\|_{L^p(\R^d,w;X)},$$
and the right-hand side tends to zero as $\delta\to 0$. Since $\eta(\delta\cdot) f_N \in \Schw(\R^d;X)$, the assertion for the Triebel-Lizorkin spaces follows. Similiar arguments show the assertion for the Besov
spaces.

For the density of $\Schw(\R^d)$ in $L^p(\R^d,w)$ see \cite[Exercise 9.4.1]{GraModern}. The invariance of $\Schw(\R^d;X)$ under the Bessel potential $J_s$ gives the density in $H^{s,p}(\R^d,w;X)$.\end{proof}

\begin{proposition}\label{prop:lifting}
Let $X$ be a Banach space, $p\in [1, \infty)$, $q\in [1, \infty]$, $s\in \R$  and $w\in A_{\infty}$. Then for all $\sigma \in \R$,
\begin{equation} \label{J-iso-B}
J_{\sigma}:B^{s}_{p,q}(\R^d,w;X)\to B^{s-\sigma}_{p,q}(\R^d,w;X) \qquad \text{isomorphically},
\end{equation}
\begin{equation}\label{J-iso-F}
 J_{\sigma}:F^{s}_{p,q}(\R^d,w;X)\to F^{s-\sigma}_{p,q}(\R^d,w;X) \qquad \text{isomorphically}.
\end{equation}

\end{proposition}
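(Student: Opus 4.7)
The plan is to express $J_\sigma$ dyadically and invoke the weighted vector-valued multiplier theorem, Proposition \ref{prop:weightedmultiplier}, to transfer between the $F^{s}_{p,q}$- and $F^{s-\sigma}_{p,q}$-norms (and likewise for the Besov scale). Fix $(\varphi_k)_{k\ge 0}\in\Phi$ and pick auxiliary cut-offs $(\hat\psi_k)_{k\ge 0}$ with $\hat\psi_k\equiv 1$ on $\supp\hat\varphi_k$, constructed in the same dyadic fashion so that $\hat\psi_k(\xi)=\hat\psi_1(2^{-k+1}\xi)$ for $k\ge 1$, with $\hat\psi_1$ supported in a slightly larger annulus than $\hat\varphi_1$ and $\hat\psi_0$ with compact support. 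Since $\hat\psi_k\hat\varphi_k=\hat\varphi_k$, one has
\[
 S_k(J_\sigma f) = \F^{-1}\!\bigl[\hat\psi_k(1+|\cdot|^2)^{\sigma/2}\,\widehat{S_kf}\,\bigr].
\]
Setting $g_k := 2^{ks}S_kf$ and $m_k(\xi) := 2^{-k\sigma}\hat\psi_k(\xi)(1+|\xi|^2)^{\sigma/2}$ this rewrites as
\[
 2^{k(s-\sigma)}S_k(J_\sigma f) = \F^{-1}[m_k\,\widehat{g_k}\,],
\]
where each $\widehat{g_k}$ is supported in $K_k := \supp\hat\varphi_k$, an annulus (or a ball, for $k=0$) of diameter $\theta_k\sim 2^k$.

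The main technical step is to verify that $\sup_{k\ge 0}\|m_k(\theta_k\cdot)\|_{H^{\lambda,2}(\R^d)}<\infty$ for some $\lambda>d/2+d/r$. A direct rescaling gives, for $k\ge 1$,
\[
 m_k(\theta_k\xi) = (2^{-k}\theta_k)^{\sigma}\,\hat\psi_k(\theta_k\xi)\,\bigl(\theta_k^{-2}+|\xi|^2\bigr)^{\sigma/2}.
\]
Now $\hat\psi_k(\theta_k\cdot)$ is supported in a fixed annulus bounded away from the origin, $(2^{-k}\theta_k)^\sigma$ is a bounded constant, and on that fixed annulus the factor $(\theta_k^{-2}+|\xi|^2)^{\sigma/2}$ is smooth with all derivatives bounded uniformly in $k$. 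The case $k=0$ is trivial because $m_0$ is Schwartz with compact Fourier support, so the desired uniform bound follows.

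Since $w\in A_\infty$ we may pick $r\in(0,\min\{p,q\})$ with $w\in A_{p/r}$. Applying Proposition \ref{prop:weightedmultiplier} to the sequence $(g_k)$ with multipliers $(m_k)$ in $L^p(\R^d,w;\ell^q(X))$ yields
\[
 \|J_\sigma f\|_{F^{s-\sigma}_{p,q}(\R^d,w;X)} \le C\,\|f\|_{F^{s}_{p,q}(\R^d,w;X)},
\]
which is \eqref{J-iso-F}. For \eqref{J-iso-B} I would apply Proposition \ref{prop:weightedmultiplier} one dyadic scale at a time (only one $g_k$ nonzero, or equivalently with the $\ell^\infty$-sequence norm) to obtain a $k$-independent bound $\|2^{k(s-\sigma)}S_k(J_\sigma f)\|_{L^p(\R^d,w;X)}\le C\|g_k\|_{L^p(\R^d,w;X)}$, and then take $\ell^q$-norms in $k$. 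The operator inverse is $J_{-\sigma}$, to which the same argument applies with $\sigma$ replaced by $-\sigma$ throughout; combined with the identity $J_\sigma J_{-\sigma}=\mathrm{Id}$ on $\TD(\R^d;X)$ this upgrades the two-sided bound to an isomorphism. The only subtle point is the uniform multiplier estimate for $k\ge 1$: choosing $\hat\psi_k$ so that its dilation $\hat\psi_k(\theta_k\cdot)$ stays bounded away from the origin is what tames the potential singularity of $(1+|\xi|^2)^{\sigma/2}$ at $\xi=0$ (for $\sigma<0$) and its polynomial growth at infinity (for $\sigma>0$) after rescaling.
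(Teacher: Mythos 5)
Your proof is correct and follows essentially the same route as the paper: the paper's own proof of Proposition \ref{prop:lifting} simply chooses $r\in(0,\min\{p,q\})$ with $w\in A_{p/r}$ and invokes Proposition \ref{prop:weightedmultiplier} together with the unweighted argument of \cite[Theorem 2.3.8]{Tri83}, which is exactly the dyadic multiplier computation you have written out in detail (auxiliary cut-offs $\hat\psi_k$, the rescaled uniform $H^{\lambda,2}$-bound, and inversion via $J_{-\sigma}$). The only cosmetic slip is describing $m_0$ as ``Schwartz with compact Fourier support''; it is a compactly supported smooth function of the frequency variable, which is all that is needed for the uniform bound.
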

\begin{proof}Choose $r\in (0,\min\{p,q\})$ such that $w\in A_{p/r}$. Using Proposition \ref{prop:weightedmultiplier}, the same proof as in the unweighted case gives the assertions  (see \cite[Theorem 2.3.8]{Tri83}).
\end{proof}

\begin{proposition}\label{prop:differentiation}
Let $X$ be a Banach space, $p\in [1, \infty)$, $q\in [1, \infty]$, $s\in \R$ and $w\in A_{\infty}$. Then for all $k\in \N$ it holds that
\begin{equation}\label{eq:Besovdifferentiation}
\sum_{|\alpha|\leq k} \|D^\alpha f\|_{B_{p,q}^{s-k}(\R^d,w;X)} \quad \text{defines an equivalent norm on }\, B_{p,q}^{s}(\R^d,w;X);
\end{equation}
\begin{equation}\label{eq:TLdifferentiation}
\sum_{|\alpha|\leq k} \|D^\alpha f\|_{F_{p,q}^{s-k}(\R^d,w;X)} \quad \text{defines an equivalent norm on }\, F_{p,q}^{s}(\R^d,w;X).
\end{equation}
\end{proposition}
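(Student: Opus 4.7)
My plan is to prove both \eqref{eq:Besovdifferentiation} and \eqref{eq:TLdifferentiation} via the same scheme: an upper bound and a lower bound, both relying on Propositions \ref{prop:weightedmultiplier} and \ref{prop:lifting}. I will describe the argument in the Besov case and note that the $F$-case is identical. This is the strategy of \cite[Section 2.3.8]{Tri83} in the unweighted case, now weighted.

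\emph{Upper bound.} Fix a multi-index $\alpha$ with $|\alpha|\leq k$; the goal is $\|D^\alpha f\|_{B^{s-k}_{p,q}(\R^d,w;X)}\leq C\|f\|_{B^{s}_{p,q}(\R^d,w;X)}$. I would first establish the single-block bound
\[
\|D^\alpha S_n f\|_{L^p(\R^d,w;X)} \leq C\, 2^{n|\alpha|}\|S_n f\|_{L^p(\R^d,w;X)}, \qquad n\geq 0,
\]
with $C$ independent of $n$. Fixing a cutoff $\phi \in C^\infty_c(\R^d)$ equal to $1$ on $\supp(\wh{\varphi}_1)$ and supported in an annulus (and a similar cutoff for $n=0$), I set $\theta_n = 2^{n-1}$ and $m_n(\xi) := 2^{-n|\alpha|}(i\xi)^\alpha \phi(2^{-(n-1)}\xi)$ for $n\geq 1$. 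A direct computation gives $m_n(\theta_n\xi) = 2^{-|\alpha|}(i\xi)^\alpha\phi(\xi)$, independently of $n$, so $\sup_n\|m_n(\theta_n\cdot)\|_{H^{\lambda,2}(\R^d)}<\infty$, and Proposition \ref{prop:weightedmultiplier} (applied to each $n$ individually, with all other entries zero) yields the single-block bound. Choosing $r\in (0,\min\{p,q\})$ with $w\in A_{p/r}$ is possible since $w\in A_\infty$. Since $S_n D^\alpha = D^\alpha S_n$ and $|\alpha|\leq k$ implies $2^{n(s-k+|\alpha|)}\leq 2^{ns}$ for $n\geq 0$, summing in $\ell^q$ with the weights $2^{n(s-k)}$ gives $\|D^\alpha f\|_{B^{s-k}_{p,q}}\leq C\|f\|_{B^s_{p,q}}$.

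\emph{Lower bound.} By Proposition \ref{prop:lifting} with $\sigma=2k$, $\|f\|_{B^s_{p,q}} \sim \|J_{2k} f\|_{B^{s-2k}_{p,q}}$, so it suffices to bound $\|J_{2k} f\|_{B^{s-2k}_{p,q}}$ by $\sum_{|\alpha|\leq k}\|D^\alpha f\|_{B^{s-k}_{p,q}}$. Since $(1+|\xi|^2)^k$ is a polynomial in $\xi$ of degree $2k$, the Bessel potential $J_{2k}$ equals the differential operator $(I-\Delta)^k$, and the multinomial expansion yields
\[
J_{2k} f = \sum_{|\alpha|\leq k} c_\alpha D^{2\alpha} f
\]
for certain real constants $c_\alpha$. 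Writing $D^{2\alpha} = D^\alpha D^\alpha$ and re-applying the upper bound at smoothness level $s-k$ to the function $D^\alpha f$ gives $\|D^{2\alpha} f\|_{B^{s-2k}_{p,q}}\leq C\|D^\alpha f\|_{B^{s-k}_{p,q}}$ for each $|\alpha|\leq k$; the triangle inequality then completes the argument. The statement \eqref{eq:TLdifferentiation} follows by the same scheme, using Proposition \ref{prop:weightedmultiplier} in its full $L^p(\R^d,w;\ell^q(X))$-form for the upper bound and the $F$-version of Proposition \ref{prop:lifting} for the lower bound.

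\emph{Main obstacle.} The only non-routine point is the correct $n$-normalization of $m_n$ in the upper bound: the factor $2^{-n|\alpha|}$ is essential, for otherwise the rescaled symbols $m_n(\theta_n\cdot)$ would not have a uniform $H^{\lambda,2}$-bound and the constant in Proposition \ref{prop:weightedmultiplier} would blow up with $n$. Once this normalization is set up, everything else is bookkeeping against Propositions \ref{prop:weightedmultiplier} and \ref{prop:lifting}.
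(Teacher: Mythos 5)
Your argument is correct and is essentially the paper's own proof written out in full: the paper simply remarks that the classical argument of \cite[Theorem 2.3.8]{Tri83} and \cite[Section 15.6]{Tr97} (a Fourier multiplier bound for the upper estimate, combined with the lift $J_{2k}=(I-\Delta)^k$ and its expansion into the $D^{2\alpha}$ for the lower estimate) carries over verbatim once Proposition \ref{prop:weightedmultiplier} is available, and that is precisely the scheme you execute, including the correct $2^{-n|\alpha|}$ normalization needed for the uniform symbol bound. No gaps.
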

\begin{proof}In the unweighted scalar case, these results are shown in \cite[Theorem 2.3.8]{Tri83}. The proofs are essentially based on a multiplier theorem of Mihlin-H\"ormander type in Besov and Triebel-Lizorkin spaces, see \cite[Theorem 2.3.7]{Tri83} for the scalar and \cite[Section 15.6]{Tr97} for the vector-valued case. Using Proposition \ref{prop:weightedmultiplier}, the proof given in \cite[Section 15.6]{Tr97} carries over to the weighted setting, for all $w\in A_\infty$.\end{proof}

\subsection{Elementary embeddings}

The elementary embedding properties and their proofs for the above vector-valued function spaces are the same as in the unweighted case (see \cite[Section 2.3.2]{Tri83}):

\begin{proposition}\label{prop:elementaryembeddingBTL}
Let $X$ be a Banach space and $w\in A_{\infty}$.
\begin{enumerate}[(1)]
\item For all $1\leq q_0\leq q_1\leq \infty$ and $s\in \R$ one has
\[B_{p,q_0}^s (\R^d,w;X)\hookrightarrow B_{p,q_1}^s (\R^d,w;X), \qquad p\in [1, \infty],\]
\[F_{p,q_0}^s (\R^d,w;X)\hookrightarrow F_{p,q_1}^s (\R^d,w;X),  \qquad p\in [1, \infty).\]

\item For all $q_0,q_1\in [1, \infty]$, $s\in \R$ and $\varepsilon>0$ one has
\[B_{p,q_0}^{s+\varepsilon} (\R^d,w;X)\hookrightarrow B_{p,q_1}^s (\R^d,w;X), \qquad p\in [1, \infty],\]
\[F_{p,q_0}^{s+\varepsilon} (\R^d,w;X)\hookrightarrow F_{p,q_1}^s (\R^d,w;X), \qquad p\in [1, \infty).\]

\item For all $q\in [1, \infty]$, $s\in \R$ and $p\in [1, \infty)$ one has
\[B_{p,\min\{p,q\}}^{s} (\R^d,w;X)\hookrightarrow F_{p,q}^{s} (\R^d,w;X) \hookrightarrow B_{p,\max\{p,q\}}^{s} (\R^d,w;X).\]

\end{enumerate}
\end{proposition}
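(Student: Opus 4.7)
\medskip

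\noindent\textbf{Proof plan.} All three claims are elementary consequences of mixed-norm manipulations that are insensitive to both the weight $w \in A_\infty$ and the Banach space $X$, since the two enter only through the inner $L^p(\R^d,w;X)$-norm. Effectively one replaces $S_k f(x)$ by $\|S_k f(x)\|_X$ throughout and argues on nonnegative scalar sequences. The plan is to follow the unweighted scalar treatment of \cite[Section 2.3.2]{Tri83}; none of the harmonic-analytic tools (maximal inequalities, multipliers) of Section~\ref{prelim} is required.

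For (1), I would apply the standard inclusion $\ell^{q_0} \hookrightarrow \ell^{q_1}$ (with constant one) to the scalar sequence $(\|2^{ks} S_k f\|_{L^p(\R^d,w;X)})_{k\geq 0}$ in the Besov case, and pointwise in $x$ to $(2^{ks} \|S_k f(x)\|_X)_{k\geq 0}$ before taking the outer $L^p(\R^d,w)$-norm in the Triebel--Lizorkin case. For (2), I would factor $2^{ks} S_k f = 2^{-k\varepsilon} \cdot 2^{k(s+\varepsilon)} S_k f$: when $q_0 \leq q_1$, use $2^{-k\varepsilon} \leq 1$ and part (1); when $q_0 > q_1$, choose $r \in [1, \infty]$ with $1/q_1 = 1/r + 1/q_0$, observe $(2^{-k\varepsilon})_{k\geq 0} \in \ell^r$ (since $\varepsilon > 0$), and apply H\"older's inequality for sequences, pointwise in $x$ in the $F$-case.

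For (3), the starting point is the Fubini identity $\ell^p(L^p(\R^d,w;X)) = L^p(\R^d,w;\ell^p(X))$, which gives $B^s_{p,p}(\R^d,w;X) = F^s_{p,p}(\R^d,w;X)$ with equal norms. Minkowski's integral inequality, applied to the nonnegative scalar sequence $(\|2^{ks}S_k f(x)\|_X)_{k\geq 0}$, yields $F^s_{p,q} \hookrightarrow B^s_{p,q}$ when $p \leq q$ and $B^s_{p,q} \hookrightarrow F^s_{p,q}$ when $q \leq p$. Combining with (1) I would then argue by cases: if $p \leq q$, chain $B^s_{p,\min(p,q)} = B^s_{p,p} = F^s_{p,p} \hookrightarrow F^s_{p,q} \hookrightarrow B^s_{p,q} = B^s_{p,\max(p,q)}$, using (1) on the $F$-scale for the middle embedding; if $q \leq p$, chain $B^s_{p,\min(p,q)} = B^s_{p,q} \hookrightarrow F^s_{p,q} \hookrightarrow F^s_{p,p} = B^s_{p,p} = B^s_{p,\max(p,q)}$, again using (1) on the $F$-scale. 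I do not anticipate a substantive obstacle: each step is a routine mixed-norm manipulation in which the weight and the Banach space play no role, and the endpoint $q = \infty$ is handled by the standard conventions.
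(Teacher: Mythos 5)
Your argument is correct and is exactly the route the paper intends: the paper gives no explicit proof, stating only that the weighted vector-valued case is proved as in the unweighted scalar case of \cite[Section 2.3.2]{Tri83}, and your write-up ($\ell^{q_0}\hookrightarrow\ell^{q_1}$ for (1), H\"older with the $\ell^r$-summability of $(2^{-k\varepsilon})_k$ for (2), and the Fubini identity $B^s_{p,p}=F^s_{p,p}$ plus Minkowski's integral inequality for (3)) is precisely that argument, with the correct observation that $w$ and $X$ enter only through the inner norm. No gaps.
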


The $H$-spaces are related to the $B$- and $F$-spaces as follows.

\begin{proposition}\label{prop:squeezeHF}
Let $X$ be a Banach space, $s\in \R$, $p\in (1, \infty)$ and $w\in A_p$. Then
\begin{align}
\label{eq:BesovH}
B^{s}_{p,1}(\R^d,w;X) & \hookrightarrow H^{s,p}(\R^d,w;X) \hookrightarrow B^{s}_{p,\infty}(\R^d,w;X),
\\ \label{eq:TriebelLizorkinH}
F^{s}_{p,1}(\R^d,w;X) & \hookrightarrow H^{s,p}(\R^d,w;X) \hookrightarrow F^{s}_{p,\infty}(\R^d,w;X).
\end{align}
Moreover, if $m\in \N$, then
\begin{align}
\label{eq:BesovW} B^{m}_{p,1}(\R^d,w;X) & \hookrightarrow W^{m,p}(\R^d,w;X) \hookrightarrow B^{m}_{p,\infty}(\R^d,w;X),
\\ \label{eq:TriebelLizorkinW}
F^{m}_{p,1}(\R^d,w;X) & \hookrightarrow W^{m,p}(\R^d,w;X) \hookrightarrow F^{m}_{p,\infty}(\R^d,w;X).
\end{align}
\end{proposition}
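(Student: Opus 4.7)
My plan is to reduce everything to the base case $s=0$ via the lifting operator $J_s$, to prove the base case using Definition~\ref{def:Phi} together with Lemma~\ref{lem:Stein}, and finally to bootstrap the Sobolev assertions from the Bessel-potential ones using Proposition~\ref{prop:differentiation}.

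For \eqref{eq:BesovH} and \eqref{eq:TriebelLizorkinH} I would observe that $J_s$ is an isomorphism on each of the four spaces in each chain: on the $B$- and $F$-spaces by Proposition~\ref{prop:lifting}, and on $H^{s,p}$ by \eqref{eq:JsigmaH}. Applying $J_s$ therefore reduces both chains to the case $s=0$, which reads
\[
B^{0}_{p,1}(\R^d,w;X) \hookrightarrow L^{p}(\R^d,w;X) \hookrightarrow B^{0}_{p,\infty}(\R^d,w;X),\qquad
F^{0}_{p,1}(\R^d,w;X) \hookrightarrow L^{p}(\R^d,w;X) \hookrightarrow F^{0}_{p,\infty}(\R^d,w;X),
\]
after identifying $H^{0,p}(\R^d,w;X) = L^{p}(\R^d,w;X)$.

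For the base case, the two left-hand embeddings are the triangle inequality. Writing $f = \sum_{k\ge 0} S_k f$ in $\TD(\R^d;X)$ and observing that $\sum_k \|S_k f(\cdot)\|$ is controlled in $L^p(w)$ by $\|f\|_{F^{0}_{p,1}}$ (resp.\ $\sum_k \|S_k f\|_{L^p(w;X)} = \|f\|_{B^{0}_{p,1}}$), the partial sums $\sum_{k\le N} S_k f$ converge absolutely in $L^{p}(w;X)$, so their limit equals $f$ a.e. and satisfies the asserted bound. For the two right-hand embeddings, pick $(\varphi_n)\in \Phi$ as in Definition~\ref{def:Phi}. Since $\varphi_k = (\varphi_1)_{2^{k-1}}$ for $k\ge 1$ in the scaling of Lemma~\ref{lem:Stein}, and $\varphi_0,\varphi_1\in\Schw(\R^d)$ both have radial decreasing integrable majorants, Lemma~\ref{lem:Stein} (first assertion applied to $\varphi=\varphi_0$, and then to $\varphi=\varphi_1$ with $t=2^{k-1}$) gives
\[
\sup_{k\ge 0} \|S_k f\|_{L^p(w;X)} \le C\|f\|_{L^p(w;X)},
\]
which is the $B^0_{p,\infty}$ estimate; the second assertion of Lemma~\ref{lem:Stein}, applied analogously with $q=\infty$ and $f_k\equiv f$, yields the $F^0_{p,\infty}$ estimate
\[
\bigl\|(S_k f)_{k\ge 0}\bigr\|_{L^p(w;\ell^\infty(X))} \le C\|f\|_{L^p(w;X)}.
\]

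For \eqref{eq:BesovW} and \eqref{eq:TriebelLizorkinW} I would apply Proposition~\ref{prop:differentiation} with $k=m$ to rewrite the $B^m_{p,q}$-norm (resp.\ $F^m_{p,q}$-norm) as an equivalent sum $\sum_{|\alpha|\le m}\|D^\alpha f\|_{B^{0}_{p,q}(\R^d,w;X)}$ (resp.\ with $F^{0}_{p,q}$). Since the case $s=0$ of \eqref{eq:BesovH}--\eqref{eq:TriebelLizorkinH} has already been established, we have $\|D^\alpha f\|_{L^p(w;X)}\lesssim \|D^\alpha f\|_{B^{0}_{p,1}(w;X)}$ (resp.\ with $F^{0}_{p,1}$) and $\|D^\alpha f\|_{B^{0}_{p,\infty}(w;X)}\lesssim \|D^\alpha f\|_{L^p(w;X)}$ (resp.\ with $F^{0}_{p,\infty}$); summing over $|\alpha|\le m$ and using Proposition~\ref{prop:differentiation} in the reverse direction yields the four $W^{m,p}$-embeddings.

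The only delicate point I foresee is keeping the index $k=0$ in line with the dyadic pieces $k\ge 1$ when invoking Lemma~\ref{lem:Stein}: one must either handle $\varphi_0$ separately or apply the lemma to two scales $\varphi_0,\varphi_1$. Everything else is bookkeeping made routine by the lifting property, the triangle inequality, and Propositions~\ref{prop:differentiation}--\ref{prop:lifting}.
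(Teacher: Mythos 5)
Your proof is correct and follows essentially the same route as the paper: reduce to $s=m=0$ via Propositions \ref{prop:lifting} and \ref{prop:differentiation}, obtain the left-hand embeddings from the triangle inequality applied to $f=\sum_k S_k f$, and the right-hand ones from Lemma \ref{lem:Stein} applied to dilates of $\varphi_0$ and $\varphi_1$. The only cosmetic differences are that the paper deduces the $B$-chain from the $F$-chain via Proposition \ref{prop:elementaryembeddingBTL} and handles the $k=0$ term by majorizing $|\varphi_n|$ with dilates of $\varphi_0$ rather than by a second application of the lemma.
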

\begin{proof} Using Propositions \ref{prop:lifting} and \ref{prop:differentiation}, it suffices to consider the cases $s=m=0$. But then (\ref{eq:BesovW}) and (\ref{eq:TriebelLizorkinW}) are the same as (\ref{eq:BesovH}) and (\ref{eq:TriebelLizorkinH}). Further, \eqref{eq:BesovH} follows from \eqref{eq:TriebelLizorkinH} and Proposition \ref{prop:elementaryembeddingBTL}. To prove \eqref{eq:TriebelLizorkinH}, we extend the argument in \cite[Proposition 2]{SchmSi05}.

For $f\in \Schw(\R^d;X)$ we have $f = \sum_{n\geq 0} S_n f$ almost everywhere on $\R^d$. Therefore
\[\|f\|_{X} \leq \sum_{n\geq 0}\|S_n f\|_{X},\]
and the first part of \eqref{eq:TriebelLizorkinH} follows by taking $L^p(\R^d,w)$-norms and Lemma \ref{density}.

Next let $f\in L^p(\R^d,w;X)$. By the definition of $S_n$ and $\varphi_n$ for each $x\in \R^d$, one has
\begin{align*}
\|S_n f(x) \|_X & \leq \int_{\R^d} |\varphi_n(y)| \|f(x-y)\|_X \, dy\leq 2\sup_{n\geq 0}\int_{\R^d} |2^{n d}\varphi_0(2^n y)| \|f(x-y)\|_X \, dy.
\end{align*}
Let $\psi:\R^d\to \R_+$ be defined by $\psi(y) = \sup\{\varphi_0(z): |z|\geq |y|\}$.
It follows from Lemma \ref{lem:Stein} that
\begin{align*}
\|f\|_{F^0_{p,\infty}(\R^d,w;X)} &= \Big\|\sup_{n\geq 0}\|S_n f \|_X\Big\|_{L^p(\R^d,w)}
 \leq 2 \|\psi\|_{L^1(\R^d)} C_{p,w} \|f\|_{L^p(\R^d,w;X)},
\end{align*}
which proves the second part of \eqref{eq:TriebelLizorkinH}.\end{proof}

\begin{remark}\label{remarkProp3.12}
 The proof shows that the embeddings on the left-hand sides in the above proposition are also true for $p=1$ and $w\in A_{\infty}$. For the embeddings on the right-hand side this is different:

For $p\in (1,\infty)$, let $A_p^{{\rm loc}}$ be the class of weights defined in \cite{Ry01}, i.e., $w\in A_p^{{\rm loc}}$ if
$$\sup_{|Q|\leq 1} \frac{1}{|Q|^p} \left (\int_Q w d x\right)\left (\int_Q w' dx \right)^{p-1} <\infty,$$
where the supremum is taken over all cubes $Q$ in $\R^d$ with sides parallel to the coordinate axes.  For weights of the form $w(x) = |x|^{\gamma}$ one can check that $w\in A_p^{{\rm loc}}$ if and only if $w\in A_p$. Now we claim that
\begin{equation}\label{eq:LpF0}
L^p(\R^d,w) \hookrightarrow F^{0}_{p,\infty}(\R^d,w)
\end{equation}
if and only if $w\in A_p^{{\rm loc}}$.
\end{remark}
\begin{proof} If $w\in A_p^{{\rm loc}}$, then \eqref{eq:LpF0} follows from $L^p(\R^d,w) = F^{0}_{p,2}(\R^d,w)$ (see \cite[p. 178]{Ry01}) and the embedding $\ell^2\hookrightarrow \ell^\infty$.

Conversely, assume that \eqref{eq:LpF0} holds for a weight $w$. Let $(\varphi_j)_{j\geq 0}\in \Phi$. Using the continuity of $\varphi_1$ and $\varphi_1(0)>0$, we can find $c>0$ and $N\in \N$ such that
$\text{Re} (\varphi(x))\geq  c$ for all $|x|\leq d 2^{-N+2}$. Let $Q$ be a cube with $|Q|\leq 2^{-Nd}$. Let $f:\R^d\to \R$ be a function which satisfies $f\geq 0$ on $Q$ and $f = 0$ on $\R^d\setminus Q$. Let $j\in \N$ be such that $2^{-(j+N)d} \leq |Q|< 2^{-(j+N-1)d}$. Denoting by $\ell(Q)$ the maximal axis length of $Q$, it then holds $\ell(Q) <2^{-(j+N-1)}$. Now for every $x\in Q$ one has
\begin{align*}
|\varphi_{j+1}*f(x)| = 2^{jd} \Big|\int_Q  \varphi_1(2^j(x-y)) f(y) \, dy\Big|  \geq 2^{jd} c \int_Q f(y) \, dy \geq \frac{C}{|Q|} \int_Q f(y) \, dy,
\end{align*}
where we used that
\[|2^j (x- y)|\leq  2^j 2 d \ell(Q)< d 2^{-N+2} \qquad \text{for all }\,x,y\in Q.\]
Let $\lambda = \frac{C}{|Q|} \int_Q f(y) \, dy$. The above estimate and \eqref{eq:LpF0} yield
\begin{align*}
\int_Q w(x) \, dx \leq \lambda^{-p} \int_Q |\varphi_{j+1} * f(x)|^p w(x) \, dx \leq C \lambda^{-p} \|f\|_{L^p(\R^d,w)}^p.
\end{align*}
Rewriting this gives
\[\Big(\int_Q w(x) \, dx\Big) \Big(\int_Q f(y) \, dy\Big)^p \leq C |Q|^p  \int_Q |f(x)|^p w(x)\, dx.\]
As in \cite[Equation (3.12)]{Kurtz} (basically by taking $f = \one_Q w'$) this implies
\[\Big(\frac{1}{|Q|}\int_Q w(x) \, dx\Big) \Big(\frac{1}{|Q|}\int_Q w' \, dy\Big)^{p-1}\leq C.\]
Since the definition of $A_p^{{\rm loc}}$ is independent of the upper bound for the cube size (see \cite[Remark 1.5]{Ry01}) we obtain that $w\in A_p^{{\rm loc}}$.
\end{proof}

\section{Embeddings for Besov Spaces -- Proof of Theorem \ref{thm:main1}\label{sec:Besov}}

From now on we specialize to power weights, i.e., $w(x) = |x|^\gamma$ with $\gamma>-d$. We first consider sufficient conditions for the embedding, and show their optimality in the next subsection.

\subsection{Sufficient conditions} In this subsection we prove the sufficiency part for the embedding for Besov spaces, i.e. Theorem \ref{thm:main1}  (2) $\Rightarrow$ (1). The main ingredient of the proof is the following two-weight version of an inequality of Plancherel-Polya-Nikol'skij type. As already mentioned in the introduction, a completely different proof for the scalar version of Theorem \ref{thm:main1} is given in \cite{HaSkr}.
\begin{proposition}\label{prop:Nikolskii}
Let $X$ be a Banach space and let $1<p_0,p_1\leq \infty$. Let $\gamma_0,\gamma_1>-d$ and $w_0(x) = |x|^{\gamma_0}$ and $w_1(x) = |x|^{\gamma_1}$.  Suppose
\begin{equation}\label{cond:nikolskii}
 \frac{\gamma_1}{p_1}\leq \frac{\gamma_0}{p_0} \qquad \text{and} \qquad \frac{d+\gamma_1}{p_1}<\frac{d+\gamma_0}{p_0}.
\end{equation}
Let $f:\R^d\to X$ be a function with $\text{supp}(\widehat{f}) \subseteq \{x\in \R^d: |x|<t\}$, where $t>0$ is fixed.
Then for any multiindex $\alpha$ there is a constant $C_\alpha$, independent of $f$ and $t$, such that
\begin{equation}\label{eq:twoweightnikolskii}
\|D^\alpha f\|_{L^{p_1}(\R^d,w_1;X)}\leq C_\alpha t^{|\alpha| + \delta} \|f\|_{L^{p_0}(\R^d,w_0;X)},
\end{equation}
where $\delta = \frac{d+ \gamma_0}{p_0} - \frac{d + \gamma_1}{p_1}> 0$.
\end{proposition}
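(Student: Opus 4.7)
First, I would exploit the homogeneity of the power weights $w_i(x) = |x|^{\gamma_i}$ to reduce to the case $t = 1$. Setting $g(x) := f(x/t)$, one has $\supp(\widehat{g}) \subseteq \{|\xi|<1\}$, and a direct change of variables gives
\[\|g\|_{L^{p_0}(\R^d,w_0;X)} = t^{(d+\gamma_0)/p_0}\|f\|_{L^{p_0}(\R^d,w_0;X)},\]
\[\|D^\alpha g\|_{L^{p_1}(\R^d,w_1;X)} = t^{(d+\gamma_1)/p_1-|\alpha|}\|D^\alpha f\|_{L^{p_1}(\R^d,w_1;X)}.\]
Hence the inequality for arbitrary $t > 0$ with the claimed exponent $|\alpha| + \delta$ follows from the case $t = 1$, and it is at this point that the quantity $\delta = (d+\gamma_0)/p_0 - (d+\gamma_1)/p_1$ appears naturally, reflecting the scaling of $L^{p}(\R^d, |\cdot|^\gamma)$ to the power $-(d+\gamma)/p$.

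Next I would realize $D^\alpha f$ as a convolution. Choose $\eta \in \Schw(\R^d)$ with $\widehat{\eta} \equiv 1$ on $\{|\xi|\le 1\}$; then $\widehat f = \widehat\eta\,\widehat f$ forces $f = \eta * f$, and hence $D^\alpha f = \psi * f$ with $\psi := D^\alpha \eta \in \Schw(\R^d)$. The triangle inequality in $X$ yields the pointwise bound $\|D^\alpha f(x)\| \leq (|\psi| * \|f(\cdot)\|)(x)$, so the proposition reduces to the scalar weighted convolution estimate
\[\||\psi| * h\|_{L^{p_1}(\R^d, w_1)} \leq C\, \|h\|_{L^{p_0}(\R^d, w_0)}\]
for nonnegative scalar $h$, a Schwartz kernel $|\psi|$, and power weights satisfying \eqref{cond:nikolskii}. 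In particular, the vector-valued nature of the problem enters only trivially, through the scalar function $\|f(\cdot)\|$, and no geometric hypothesis on $X$ is needed.

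Finally, I would distinguish two ranges of weight exponents to handle the convolution estimate. When $\gamma_0,\gamma_1 \geq 0$, I would follow the one-weight argument of \cite{Bui82}, combining the elementary inequality $|x|^{\gamma_1/p_1} \leq C\bigl(|y|^{\gamma_1/p_1} + |x-y|^{\gamma_1/p_1}\bigr)$ with the hypothesis $\gamma_1/p_1 \leq \gamma_0/p_0$ to compare the two weights, and absorbing the extra polynomial factor in $x-y$ into the rapid decay of $\psi$. For the general case in which $\gamma_0$ or $\gamma_1$ may be negative, the conditions \eqref{cond:nikolskii} are precisely the hypotheses of the two-weight Young-type convolution inequalities of \cite{Bui94, Kerman}, which I would invoke directly on $|\psi|$ and $h$. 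The main obstacle is exactly this negative-exponent regime: when $w_0$ or $w_1$ is singular at the origin, the naive majorization $|\psi| * h \lesssim Mh$ combined with a weighted maximal inequality is too crude to deliver the sharp scaling exponent $\delta$, and one genuinely needs the delicate balance $\gamma_1/p_1 \leq \gamma_0/p_0$ together with the strict inequality $(d+\gamma_1)/p_1 < (d+\gamma_0)/p_0$ to apply the refined Young-type estimates of \cite{Bui94, Kerman}.
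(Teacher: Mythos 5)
Your overall skeleton --- rescale to $t=1$, write $D^\alpha f = (D^\alpha\eta)*f$ for a Schwartz function $\eta$ with $\widehat\eta\equiv 1$ on the unit ball, pass to the scalar function $\|f(\cdot)\|$, and then prove a two-weight convolution estimate --- is exactly the strategy of the paper, and your scaling computation correctly produces the exponent $|\alpha|+\delta$. The gap is in the final step. It is not true that the conditions \eqref{cond:nikolskii} are ``precisely the hypotheses'' of the two-weight Young inequalities of \cite{Bui94, Kerman}. In the parametrization used in the paper's Lemma \ref{lem:Young} (with $q=p_0$, $r=p_1$, $b=\gamma_0/p_0$, $c=-\gamma_1/p_1$), those inequalities additionally require $1<q\leq r<\infty$, i.e.\ $p_0\leq p_1<\infty$, and $b<d(1-\tfrac1q)$, i.e.\ $\gamma_0<d(p_0-1)$ (a local $A_{p_0}$-type restriction on $w_0$), as well as $0<a<d$. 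The proposition, however, allows $p_1=\infty$, allows $p_1<p_0$, and allows $\gamma_0\geq d(p_0-1)$; none of these is covered by a direct application of the Young-type lemma, so ``invoking it directly'' in the general case does not close the proof.

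Concretely, the paper needs three further devices that your proposal omits. For $p_1=\infty$ one proves $\|f\|_{L^\infty}\leq C\|f\|_{L^{p_0}(\R^d,w_0)}$ by H\"older's inequality with a small auxiliary exponent $q\in(0,1)$ together with the $A_{r'}$-estimate of Lemma \ref{lem:Tx} applied to $w_0^{-1/(r-1)}$; this case also feeds the interpolation $\|f\|_{L^{p_1}(w)}\leq\|f\|_{L^\infty}^{1-p_0/p_1}\|f\|_{L^{p_0}(w)}^{p_0/p_1}$ used for $\gamma_0,\gamma_1\geq 0$, and the pointwise decay bound $\|f(x)\|\leq C\|f\|_{L^{p_0}(w_0)}|x|^{-\gamma_0/p_0}$ needed when $\gamma_1>\gamma_0\geq 0$ (your elementary inequality $|x|^{\gamma_1/p_1}\leq C(|y|^{\gamma_1/p_1}+|x-y|^{\gamma_1/p_1})$ alone does not suffice there, since $h|\cdot|^{\gamma_1/p_1}$ is not controlled in $L^{p_0}$ near the origin by $\|h\|_{L^{p_0}(w_0)}$ when $\gamma_1/p_1<\gamma_0/p_0$). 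For $\gamma_0\geq d(p_0-1)$ with $\gamma_1<0$ one interposes an auxiliary weight $|x|^{\gamma}$ with $\gamma\in(0,d(p_0-1))$ and chains two of the previous cases. Finally, for $p_1<p_0$ no convolution inequality is used at all: one applies H\"older's inequality with respect to the finite measures $|x|^{-d-\varepsilon}dx$ on $\R^d\setminus B_1$ and $|x|^{-d+\varepsilon}dx$ on $B_1$, which is where the \emph{strict} inequality $\frac{d+\gamma_1}{p_1}<\frac{d+\gamma_0}{p_0}$ is genuinely consumed, and then reduces to the already-settled case $p_0=p_1$. Without these case distinctions your argument only establishes the proposition for $p_0\leq p_1<\infty$ and $\gamma_0<d(p_0-1)$.
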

\begin{remark}\
\begin{enumerate}[(i)]
\item Suppose that \eqref{eq:twoweightnikolskii} holds true for $\alpha =0$ and all $f$ as in the proposition. Then it follows from the proof of Theorem \ref{thm:main1} given below that embedding \eqref{eq:introBesov} for Besov spaces holds true (with suitable chosen $s_0,s_1$). By the necessary conditions for this embedding obtained in the  Propositions \ref{prop:necc} and \ref{prop:neccnonopt2} below, we conclude that \eqref{eq:twoweightnikolskii} holds true if and only if either $p_0 = p_1$ and $\gamma_0 = \gamma_1$ or \eqref{cond:nikolskii} are satisfied.
\item The case where $w_0=w_1$ is an $A_\infty$ weight which satisfies $\inf_{x\in \R^d} w(B(x,t))\geq t^\varepsilon$ with $\varepsilon>0$ is considered in \cite[Lemma 2.5]{Bui82}. A part of the argument in \cite[Lemma 2.5]{Bui82} will be repeated in \eqref{eq:fLinftyBui} below, because the details are  needed again at a later point.
\item It would be interesting to find a two-weight characterization for \eqref{eq:twoweightnikolskii} for general weights $w_0$ and $w_1$ in case $t=1$ or more general $t$. There might be a connection to \cite[Proposition 2.1]{HaSkr}.
\item Certain weighted version of inequalities of Nikol'skij type can also be found in \cite[Sections 1.3.4 and 6.2.3]{Tri83}. However, the power weights we consider are not covered by those results.
\item It follows from the proof below that for $\gamma_0, \gamma_1\geq 0$, Proposition \ref{prop:Nikolskii} holds for all $p_0, p_1\in (0,\infty)$ which satisfy \eqref{cond:nikolskii}.
\end{enumerate}
\end{remark}

For the proof of the proposition we  make use of the following weighted version of Young's inequality (see \cite[Theorem 3.4 (3.7)]{Kerman} or \cite[Theorem 2.2 (ii)]{Bui94}). The proof is based on the Stein--Weiss result on fractional integration
(see \cite{Beck08} for a short proof).
\begin{lemma}\label{lem:Young} Let $1 < q \leq r < \infty$ and $a,b,c\in \R$ be such that
\begin{align}
\label{as:y1} \frac1r  =  \frac1q - 1 + \frac{a+b+c}{d}, \qquad b+c \geq 0, \qquad 0< a < d, \qquad b<d\big(1-\tfrac1q\big), \qquad c < \frac{d}{r}.
\end{align}
Then there is a constant $C$ such that for all measurable functions $f$ and $g$ one has
\begin{equation}\label{eq:YoungLrLinftyLq}
\|x\mapsto f*g(x) |x|^{-c}\|_{L^r(\R^d)}\leq C \|x\mapsto f(x) |x|^{a}\|_{L^\infty(\R^d)} \|x\mapsto g(x) |x|^{b}\|_{L^q(\R^d)}.
\end{equation}
\end{lemma}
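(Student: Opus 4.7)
The plan is to reduce the two-weight Young inequality to the classical Stein--Weiss fractional integration theorem, using the $L^\infty$-factor as a pointwise multiplier.

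First, the assumption $\| |\cdot|^a f \|_{L^\infty(\R^d)} =: M < \infty$ is equivalent to $|f(y)| \leq M\, |y|^{-a}$ for a.e.\ $y$. Substituting this into the convolution and changing variables $y \mapsto x-y$ gives
\begin{equation*}
|f*g(x)| \;\leq\; M \int_{\R^d} \frac{|g(y)|}{|x-y|^a}\, dy \;=\; M \cdot I_{d-a}(|g|)(x),
\end{equation*}
where $I_\mu h(x) := \int_{\R^d} |x-y|^{\mu-d} h(y)\, dy$ denotes the Riesz potential of order $\mu$. The hypothesis $0 < a < d$ ensures that $\mu := d-a$ lies in $(0,d)$, so $I_\mu$ is a well-defined Riesz potential.

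It remains to bound $\| |x|^{-c} I_{d-a}(|g|) \|_{L^r(\R^d)}$ by $\| |\cdot|^b g \|_{L^q(\R^d)}$. Setting $h(y) := |y|^b g(y)$, so that $\|h\|_{L^q} = \| |\cdot|^b g \|_{L^q}$ and $|g(y)| = |h(y)|\,|y|^{-b}$, the desired estimate reads
\begin{equation*}
\bigl\| \,|x|^{-c}\, I_\mu\bigl(|y|^{-b}|h|\bigr)\,\bigr\|_{L^r(\R^d)} \;\leq\; C\, \|h\|_{L^q(\R^d)}, \qquad \mu = d-a.
\end{equation*}
This is precisely the Stein--Weiss inequality applied with $\alpha := c$ and $\beta := b$: its hypotheses $0<\mu<d$, $\alpha < d/r$, $\beta < d/q'$, $\alpha+\beta\geq 0$, together with the dimensional balance $\tfrac{1}{r} = \tfrac{1}{q} - \tfrac{\mu-\alpha-\beta}{d}$, translate termwise into the assumptions in \eqref{as:y1}, since $\tfrac{1}{q} - \tfrac{\mu-\alpha-\beta}{d} = \tfrac{1}{q} - 1 + \tfrac{a+b+c}{d}$ when $\mu = d-a$, $\alpha = c$, $\beta = b$.

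The main content is therefore the Stein--Weiss theorem itself, for which we refer to the short proof in \cite{Beck08}; the reduction above is routine. The only place where care is needed is the verification that each condition in \eqref{as:y1} corresponds precisely to a hypothesis in Stein--Weiss --- in particular the scaling identity $\tfrac{1}{r} = \tfrac{1}{q} - 1 + \tfrac{a+b+c}{d}$ matches the Stein--Weiss balance under $\mu = d-a$. Beyond quoting the classical inequality, no further obstacle is expected.
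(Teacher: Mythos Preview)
Your proposal is correct and is exactly the approach the paper indicates: the paper does not give a detailed proof but simply cites \cite{Kerman, Bui94} and remarks that ``the proof is based on the Stein--Weiss result on fractional integration (see \cite{Beck08} for a short proof).'' Your reduction---bounding $|f(y)|\le M|y|^{-a}$ to pass to the Riesz potential $I_{d-a}$ and then matching the parameters $(\alpha,\beta,\mu)=(c,b,d-a)$ in Stein--Weiss---is precisely the intended argument, and your verification that each hypothesis in \eqref{as:y1} corresponds to a Stein--Weiss condition is accurate.
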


\begin{remark}\label{Young-nec}
In \cite[Theorem 2.1]{Bui94} several necessary conditions for weighted Young's inequalities are obtained. In our situation we obtain another necessary condition which also appears in the sufficient conditions for \eqref{eq:YoungLrLinftyLq} in \cite[Condition (14)]{Bui94}. In fact, it follows from the proof below that if \eqref{eq:YoungLrLinftyLq} holds for some $b$ and $c$ with $b+c<0$, then one obtains \eqref{eq:introBesov} with $\gamma_1/p_1>\gamma_0/p_0$. This is impossible according to Theorem \ref{thm:main1}. Therefore, $b+c\geq 0$ from \eqref{as:y1} is also necessary for \eqref{eq:YoungLrLinftyLq}. With some additional arguments one can derive the same necessary condition if the $L^\infty$-norm in \eqref{eq:YoungLrLinftyLq} is replaced by an $L^r$-norm. In a similar way one can see that $a<d$ from \eqref{as:y1} is necessary for \eqref{eq:YoungLrLinftyLq}.
\end{remark}

The next lemma is stated without proof in \cite[Condition $B_p$]{Bui82}. We include a proof for convenience.
\begin{lemma}\label{lem:Tx}
Let $p\in (1, \infty)$ and $w\in A_p$. Then there is a constant $C$ such that for all $x\in \R^d$ one has
\[\int_{\R^d}  w(y)  (1+|x-y|)^{-d p} \, dy \leq C\int_{B(x,1)} w(y) \, dy.\]
\end{lemma}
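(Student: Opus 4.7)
The plan is a dyadic decomposition around the center $x$. I would set $B_k := B(x,2^k)$ for $k\geq 0$ and split
\[
\int_{\R^d} w(y)\,(1+|x-y|)^{-dp}\,dy = \int_{B_0} w(y)\,(1+|x-y|)^{-dp}\,dy + \sum_{k\geq 1} \int_{B_k\setminus B_{k-1}} w(y)\,(1+|x-y|)^{-dp}\,dy.
\]
On $B_0$ the kernel is bounded by $1$, so this term already contributes the desired $w(B_0) = \int_{B(x,1)} w$ on the right-hand side. On the annulus $B_k\setminus B_{k-1}$ one has $|x-y|\geq 2^{k-1}$, so the $k$-th summand is controlled by $2^{-(k-1)dp}\,w(B_k)$, reducing the task to showing $\sum_{k\geq 1} 2^{-(k-1)dp}\,w(B_k) \leq C\, w(B_0)$.

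The standard way to compare $w(B_k)$ with $w(B_0)$ is to apply H\"older's inequality inside the $A_p$ condition, which gives, for any $w'\in A_q$ and concentric $E\subseteq B$,
\[
\frac{w'(B)}{w'(E)} \leq [w']_{A_q}\left(\frac{|B|}{|E|}\right)^{q}.
\]
Used with $q=p$ this yields $w(B_k)\leq [w]_{A_p}\, 2^{kdp}\,w(B_0)$, but the factor $2^{kdp}$ is then \emph{exactly} cancelled by the decay $2^{-(k-1)dp}$ of the kernel, so the series diverges. To gain the needed room I would invoke the openness (self-improvement) of the Muckenhoupt classes: since $w\in A_p$, there exists $\varepsilon = \varepsilon(d,p,w)\in(0,p-1)$ with $w\in A_{p-\varepsilon}$ (see e.g.\ \cite[Chapter V]{Stein93} or \cite[Corollary 9.2.6]{GraModern}). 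Reapplying the displayed bound with $q = p-\varepsilon$ gives $w(B_k)\leq C\, 2^{kd(p-\varepsilon)}\,w(B_0)$, so the sum becomes $C\,w(B_0)\sum_{k\geq 1} 2^{-kd\varepsilon}$, a convergent geometric series.

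The main obstacle is precisely this borderline character: the kernel exponent $dp$ is matched exactly by the worst-case $A_p$-doubling growth of $w(B_k)$, so one cannot succeed using the $A_p$ hypothesis in its most naive form. The self-improvement property of $A_p$ is the essential input that produces the missing $\varepsilon$ of margin; once it is applied, the remainder of the argument is a routine summation.
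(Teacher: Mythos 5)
Your proof is correct, but it takes a genuinely different route from the paper's. The paper's argument is a one-line reduction to Muckenhoupt's theorem: with $g_x = \one_{B(x,1)}$ one has the pointwise lower bound $(Mg_x)(y) \geq (1+|x-y|)^{-d}$ (take the ball $B(y,|x-y|+1)$, which contains $B(x,1)$), so the left-hand side is at most $\|Mg_x\|_{L^p(\R^d,w)}^p \leq C\|g_x\|_{L^p(\R^d,w)}^p = C\,w(B(x,1))$ by the boundedness of $M$ on $L^p(\R^d,w)$ for $w\in A_p$. Your dyadic decomposition instead makes the borderline nature of the exponent $dp$ explicit: the annulus estimate $w(B_k)\leq [w]_{A_p}2^{kdp}w(B_0)$ exactly cancels the kernel decay, and you correctly identify the openness of the $A_p$ classes ($w\in A_{p-\varepsilon}$) as the extra input that makes the geometric series converge. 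Both black boxes are of comparable depth --- the self-improvement of $A_p$ is in fact the key ingredient in the standard proof of Muckenhoupt's maximal theorem --- so neither argument is strictly more elementary. What your version buys is transparency about \emph{why} the naive doubling bound fails and exactly how much room is needed; what the paper's version buys is brevity and the observation that the kernel $(1+|x-y|)^{-d}$ is itself (a lower bound for) a maximal function, which packages the entire summation into a single appeal to a standard theorem. One minor point: the comparison $w(B)/w(E)\leq [w]_{A_q}(|B|/|E|)^q$ that you quote holds for arbitrary measurable $E\subseteq B$ (not only concentric balls), and your notation $w'$ for a generic $A_q$ weight clashes slightly with the paper's use of $w'$ for the dual weight $w^{-1/(p-1)}$; neither affects correctness.
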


\begin{proof}
If we let  $g_x(z) = \one_{B(x,1)}(z)$ and $s_0 = |x-y|+1$, then its maximal function $Mg_x$ satisfies
\begin{align*}
(M g_x)(y) & = \sup_{s>0} \frac{|B(y,s)\cap B(x,1)| }{|B(y,s)|}
\geq \frac{|B(y,s_0)\cap B(x,1)| }{|B(y,s_0)|}  = \frac{|B(x,1)| }{|B(y,s_0)|}= s_0^{-d} = (|x-y| + 1)^{-d}.
\end{align*}
Therefore, since $w\in A_{p}$, we see that
\[
\int_{\R^d}  w(y) (1+|x-y|)^{-dp} \, dy \leq \|M g_x\|_{L^{p}(\R^d,w)}^{p} \leq C\|g_x\|_{L^{p}(\R^d,w)}^{p} = C\int_{B(x,1)} w(y) \, dy,
\]
and the proof is finished.
\end{proof}

\begin{proof}[Proof of Proposition \ref{prop:Nikolskii}]
By a scaling argument it suffices to consider the case $t=1$. Let $B_1= \{x\in \R^d:|x|<1\}$.
For $\alpha \neq 0$, the same arguments as in the proof of \cite[Proposition 1.3.2]{Tri83} show that
$$\|D^\alpha f\|_{L^{p_1}(\R^d,w_1;X)} \leq C_\alpha \|f\|_{L^{p_1}(\R^d,w_1;X)}$$
for all $p_1\in (1,\infty]$ and $\gamma_1 > - d$. We may thus restrict to the case $\alpha = 0$.

The proof of \eqref{eq:twoweightnikolskii} for $\alpha = 0$ is split into several cases of which some are overlapping. In Case 1 we treat $p_1 = \infty$, in Cases 2-6 we consider $p_0\leq p_1<\infty$, and in Case 7  we derive the estimate for $p_0>p_1$.

{\em Case 1:} $p_0<\infty$ and $p_1 = \infty$. Then $\gamma_0 \geq 0$ by assumption,  and \eqref{eq:twoweightnikolskii} is independent of $\gamma_1$ due to $L^\infty(\R^d,w_1;X) = L^\infty(\R^d;X)$. We follow the arguments given in \cite[Lemma 2.5]{Bui82}.
First assume that $f\in L^\infty(\R^d;X)$. Let $\eta\in \Schw(\R^d)$ be such that $\text{supp}(\widehat{\eta})\subseteq B_2$ and $\widehat{\eta} = 1$ on $B_1$. Then one has $f = f*\eta$. Let $q\in (0,1)$ be so small that $\frac{\gamma_0}{r-1}<d$, where $r = \frac{p_0}{q}$. Then $w_0' = w_0^{-1/(r-1)}\in A_{r'}$.
For any $x\in \R^d$, H\"older's inequality with $\frac{1}{r} + \frac{1}{r'} = 1$ yields
\begin{equation}\label{eq:fLinftyBui}\begin{aligned}
\|f(x)\|& \leq \int_{\R^d} \|f(y)\| |\eta(x-y)| \, dy
\\ & \leq \|f\|^{1-q}_{L^\infty(\R^d;X)}  \int_{\R^d}  (\|f(y)\| |y|^{\gamma_0/p_0})^q |y|^{-q\gamma_0/p_0}|\eta(x-y)| \, dy
\\ & \leq \|f\|^{1-q}_{L^\infty(\R^d;X)}  \|f\|_{L^{p_0}(\R^d,w_0;X)}^{q}  \Big(\int_{\R^d}  w_0'(y)  |\eta(x-y)|^{r'} \, dy\Big)^{1/r'}.
\end{aligned}\end{equation}
Now since $\eta$ is a Schwartz function, there is a constant $C$ such that $|\eta(y)|^{r'}\leq C(1+|y|)^{-dr'}$ for all $y\in \R^d$.
Therefore, it follows from Lemma \ref{lem:Tx} and $-\frac{\gamma}{r-1} > -d$ that for all $x\in \R^d$ we have
\[
\Big(\int_{\R^d}  w_0'(y)  |\eta(x-y)|^{r'} \, dy\Big)^{1/r'}\leq C \int_{B(x,1)} w_0'(y)\, dy = C \int_{B(x,1)} |y|^{-\gamma_0/(r-1)} d y \leq C_{p_0,\gamma_0,d}.
\]
Combining this with \eqref{eq:fLinftyBui} we obtain
\begin{equation}\label{eq:estbuiLinfty2}
\|f\|_{L^\infty(\R^d;X)} \leq C  \|f\|_{L^{p_0}(\R^d,w_0;X)}.
\end{equation}

If $f\notin L^\infty(\R^d;X)$, then as in \cite{Bui82} we take a function $\phi\in \Schw(\R^d)$ with $\phi(0) = 1$ and $\supp \wh{\phi} \subset B_1$. Since $f$ is smooth, we have that $\phi(r\cdot)  f \in L^\infty(\R^d;X)$ for all $r>0$. For $x\in \R^d$, embedding \eqref{eq:estbuiLinfty2} the dominated convergence theorem imply that
$$\|f(x)\|  = \lim_{r\to 0} \|\phi(rx)  f(x)\| \leq C\lim_{r\to 0} \|\phi(r\cdot) f\|_{L^{p_0}(\R^d,w_0;X)} = C\|f\|_{L^{p_0}(\R^d,w_0;X)}.$$

{\em Case 2:} $p_0\leq p_1<\infty$ and $\gamma:=\gamma_0 =\gamma_1 \geq 0$. Let $w(x) = |x|^{\gamma}$. We can assume $p_0<p_1$, since the other case is trivial. By Case 1 we know that $f\in L^\infty(\R^d;X)$, and from \eqref{eq:estbuiLinfty2} we obtain
\begin{align*}
\|f\|_{L^{p_1}(\R^d,w;X)} \leq \|f\|_{L^\infty(\R^d;X)}^{1-\frac{p_0}{p_1}} \|f\|_{L^{p_0}(\R^d, w;X)}^{p_0/p_1}
\leq C \|f\|_{L^{p_0}(\R^d, w;X)}.
\end{align*}

{\em Case 3:} $p_0\leq p_1<\infty$ and $0\leq \gamma_1< \gamma_0$. First consider the integral over $\R^d\setminus B_1$. Since $|x|^{\gamma_1} \leq |x|^{\gamma_0}$ for $|x|\geq 1$, one has
\begin{align*}
\|f\|_{L^{p_1}(\R^d\backslash B_1,w_1;X)}& \leq \|f\|_{L^{p_1}(\R^d\backslash B_1,w_0;X)}
 \leq \|f\|_{L^{p_1}(\R^d,w_0;X)} \leq C\|f\|_{L^{p_0}(\R^d,w_0;X)},
\end{align*}
where in the last step we applied Case 2. For the integral over $B_1$, let $p = p_1 \frac{\gamma_0}{\gamma_1} > p_1$. We apply H\"older's inequality with  $\frac{\gamma_1}{\gamma_0} + \frac{1}{r}=1$, to obtain
\begin{align*}
\|f\|_{L^{p_1}(B_1,w_1;X)}& \leq \|f\|_{L^{p}(\R^d,w_0;X)} |B_1|^{1/p_1r}
\leq C\|f\|_{L^{p_0}(\R^d,w_0;X)},
\end{align*}
where in the last step we used Case 2.

{\em Case 4:} $p_0\leq p_1<\infty$ and $0\leq \gamma_0<\gamma_1$. Then necessarily $p_0 < p_1$.
First consider the integral over $B_1$. Since $\gamma_1 > \gamma_0$, we have
\begin{align*}
\|f\|_{L^{p_1}(B_1,w_1;X)} & \leq \|f\|_{L^{p_1}(B_1,w_0;X)} \leq \|f\|_{L^{p_1}(\R^d,w_0;X)}
\leq C\|f\|_{L^{p_0}(\R^d,w_0;X)},
\end{align*}
where in the last step we applied Case 2. Next consider $\R^d \backslash B_1$. We have
\begin{equation}\label{eq:superestweight}
\begin{aligned}
 \|f\|_{L^{p_1}(\R^d\backslash B_1,w_1;X)} &\, = \Big ( \int_{\R^d \backslash B_1} \|f(x)\|^{p_0} |x|^{\gamma_0} \|f(x)\|^{p_1-p_0} |x|^{\gamma_1-\gamma_0} \, dx \Big)^{1/p_1}\\
&\, \leq \|f\|_{L^{p_0}(\R^d,w_0;X)}^{p_0/p_1} \Big ( \sup_{x\in \R^d \backslash B_1} \|f(x)\| |x|^{(\gamma_1-\gamma_0)/(p_1-p_0)}\Big)^{1-p_0/p_1}.
\end{aligned}\end{equation}
Now fix $x\in \R^d$ with $|x|\geq 1$. Since $f\in L^\infty(\R^d;X)$,  inequality \eqref{eq:fLinftyBui} implies that
\[\|f(x)\| \leq C \|f\|_{L^{p_0}(\R^d,w_0;X)} \Big(\int_{\R^d}  w'(y)  |\eta(x-y)|^{r'} \, dy\Big)^{\frac{1}{r'q}},\]
where $r = \frac{p_0}{q}$ as in Case 1 and $w'(y) = |y|^{-\gamma_0/(r-1)}$. By Lemma \ref{lem:Tx}, for $|x|\geq 1$ one can estimate
\[\int_{\R^d}  w'(y)  |\eta(x-y)|^{r'} \, dy \leq C\int_{B(x,1)} w'(y) \, dy\leq C|x|^{-\gamma_0/(r-1)},\]
and thus $(r-1)r' q = p_0$ yields
\begin{equation}\label{eq:weightedfest}
\|f(x)\| \leq C \|f\|_{L^{p_0}(\R^d,w_0;X)} |x|^{-\gamma_0/p_0},  \qquad  x\in \R^d\setminus B_1.
\end{equation}
Substituting \eqref{eq:weightedfest} into \eqref{eq:superestweight}, we obtain
\[\|f\|_{L^{p_1}(\R^d\backslash B_1,w_1;X)} \leq C \|f\|_{L^{p_0}(\R^d,w_0;X)} \sup_{x\in \R^d \backslash B_1} |x|^{-\gamma_0/p_0 + (\gamma_1-\gamma_0)/(p_1-p_0)}\leq C \|f\|_{L^{p_0}(\R^d,w_0;X)},\]
where the last estimate is a consequence of $|x|\geq 1$ and $\gamma_1/p_1 \leq \gamma_0/p_0$.

{\em Case 5:} $p_0 \leq p_1<\infty$, $-d<\gamma_0<d (p_0-1)$ and $-d<\gamma_1$. Then $\gamma_1 < d(p_1-1)$ by assumption. Let $\eta\in \Schw(\R^d)$ be as in Case 1. For all $x\in \R^d$ we then have
\begin{align*}
\|f(x)\|\leq g*|\eta|(x)
\end{align*}
where $g(x) = \|f(x)\|$. Set
$$
r=p_1, \qquad q = p_0, \qquad  a = d- \frac{d+\gamma_0}{p_0} + \frac{d+\gamma_1}{p_1}, \qquad   b = \frac{\gamma_0}{p_0}, \qquad  c = -\frac{\gamma_1}{p_1}.
$$
We claim the conditions of Lemma \ref{lem:Young} hold. Indeed, note that
$b+c \geq 0$ is equivalent to $\frac{\gamma_1}{p_1} \leq \frac{\gamma_0}{p_0}$, $a>0$ follows from $-\gamma_0>-d(p_0-1)$ and $\gamma_1 > -d$, and $a < d$ follows from  $-\frac{d+\gamma_0}{p_0} + \frac{d+\gamma_0}{p_0} <0$.  The estimate $b<d(1-\tfrac1q)$ is equivalent to $\gamma_0<d(p_0-1)$, and $c < \frac{d}{r}$ is equivalent to $\gamma_1 > -d$. Therefore, Lemma \ref{lem:Young} yields
\begin{align*}
 \|f\|_{L^{p_1}(\R^d,w_1;X)} &\, \leq \|g*|\eta|\|_{L^{p_1}(\R^d,w_1)} \\
&\, \leq C\|x\mapsto |x|^{a} \eta(x)\|_{L^\infty(\R^d)} \|g\|_{L^{p_0}(\R^d,w_0)} = C \|f\|_{L^{p_0}(\R^d,w_0;X)}.
\end{align*}

{\em Case 6:} $p_0 \leq p_1<\infty$, $d(p_0-1)\leq \gamma_0$ and $-d < \gamma_1<0$.
Let $\gamma\in (0,d(p_0-1))$ be arbitrary, and set $w(x) = |x|^{\gamma}$. Then $\gamma_1/p_1 \leq 0 <\gamma/p_0<\gamma_0/p_0$. Therefore, by Cases 5 and 3 we have
\[\|f\|_{L^{p_1}(\R^d,w_1;X)}\leq C \|f\|_{L^{p_0}(\R^d,w;X)}\leq C\|f\|_{L^{p_0}(\R^d,w_0;X)}.\]

{\em Case 7:} $p_1< p_0 <\infty$ and $\gamma_0,\gamma_1 > -d$. For $\varepsilon>0$ we use H\"older's inequality with respect
to the finite measure $|x|^{-d-\varepsilon} d x$ on $\R^d\backslash B_1$, to
obtain
\begin{align*}
\|f\|_{L^{p_1}(\R^d\backslash B_1,w_1)} &\,= \Big (\int_{\R^d \backslash
B_1} \|f(x)\|^{p_1} |x|^{\gamma_1 +d
+\varepsilon} |x|^{-d-\varepsilon} d x \Big)^{1/p_1}\\
&\,\leq C  \Big (\int_{\R^d \backslash B_1} \|f(x)\|^{p_0}
|x|^{\frac{p_0}{p_1}(\gamma_1 +d +\varepsilon) -d-\varepsilon} d x
\Big)^{1/p_0}.
\end{align*}
The assumption $\frac{\gamma_1+d}{p_1}< \frac{\gamma_0+d}{p_0}$ implies that
$\frac{p_0}{p_1}(\gamma_1 +d +\varepsilon) -d-\varepsilon \leq \gamma_0$ for
sufficiently small $\varepsilon$. Employing this, it follows that
\[\|f\|_{L^{p_1}(\R^d\backslash B_1,w_1)} \leq C \|f\|_{L^{p_0}(\R^d,w_0)}.\]
To estimate the weighted $L^{p_1}$-norms over $B_1$, for $\varepsilon>0$ we use H\"older's inequality with respect to the
finite measure $|x|^{-d +\varepsilon} d x$ on $B_1$, which gives
\begin{align*}
\|f\|_{L^{p_1}( B_1,w_1;X)} &\,= \left (\int_{B_1} \|f(x)\|^{p_1}
|x|^{\gamma_1 +d -\varepsilon}
|x|^{-d+\varepsilon} d x \right)^{1/p_1}\\
&\,\leq C  \left (\int_{B_1} \|f(x)\|^{p_0}
|x|^{\widetilde{\gamma}_0} d x \right)^{1/p_0} \leq C
\|f\|_{L^{p_0}(\R^d,\widetilde{w}_0;X)}.
\end{align*}
Here in the last line we have set $\widetilde{w}_0(x) =
|x|^{\widetilde{\gamma}_0}$ with $\widetilde{\gamma}_0 :=
\frac{p_0}{p_1}(\gamma_1 +d -\varepsilon) -d+\varepsilon$.
Observe that if $\varepsilon$ is sufficiently small, then the assumptions
$\gamma_1+ d>0$ and $\frac{d +\gamma_1}{p_1} < \frac{ d + \gamma_0}{p_0}$ imply
$-d <\widetilde{\gamma}_0$ and $\frac{\widetilde{\gamma}_0}{p_0} <
\frac{\gamma_0}{p_0},$ respectively.
Therefore, by Cases 2-6 we obtain that
\[\|f\|_{L^{p_0}(\R^d,\widetilde{w}_0;X)} \leq \|f\|_{L^{p_0}(\R^d,w_0;X)}.\]
Combing these estimates yields \eqref{eq:twoweightnikolskii}.\end{proof}

\begin{proof}[Proof of Theorem \ref{thm:main1} (2) $\Rightarrow$ (1)] If  \eqref{cond:trivial} holds, then embedding (1) follows from Proposition \ref{prop:elementaryembeddingBTL}. Moreover, if \eqref{cond:expnew} implies (1), then (1) also follows from \eqref{cond:exp} by Proposition \ref{prop:elementaryembeddingBTL}.

Assume that \eqref{cond:expnew} holds. For (1) it suffices to consider the case $q:=q_0 = q_1$. Let $\delta = \frac{\gamma_0+d}{p_0} - \frac{\gamma_1+d}{p_1}$. Since $\widehat{S_n f} = \widehat{\varphi}_n \widehat{f}$ is supported in $\{x\in \R^d: |x|<3 \cdot 2^{n-1}\}$, Proposition \ref{prop:Nikolskii} gives
\[\|S_nf\|_{L^{p_1}(\R^d,w_1;X)}\leq C 2^{\delta n} \|S_nf\|_{L^{p_0}(\R^d,w_0;X)},  \qquad n\geq 0.\]
Therefore, using $s_1 + \delta = s_0$, we find that
\begin{align*}
\|f\|_{B^{s_1}_{p_1, q}(\R^d,w_1;X)} & = \big\|(2^{s_1 n}\|S_nf\|_{L^{p_1}(\R^d,w_1;X)})_{n\geq 0}\big\|_{\ell^{q}}
\\ & \leq C \big\|(2^{s_0 n}\|S_nf\|_{L^{p_0}(\R^d,w_0;X)})_{n\geq 0}\big\|_{\ell^{q}} = C \|f\|_{B^{s_0}_{p_0, q}(\R^d,w_0;X)}.
\end{align*}
\end{proof}

\subsection{Necessary conditions}

In this subsection we prove Theorem \ref{thm:main1} (1) $\Rightarrow$ (2). We start with an elementary lemma.

\begin{lemma}\label{lem:phinphij}
Let $p\in [1, \infty]$ and $w(x) = |x|^\gamma$ with $\gamma>-d$. Let $(\varphi_n)_{n \geq 0}\in \Phi$ and $j\in \{-1,0,1\}$. Then there is a constant $C_{\varphi,p,\gamma,j}$ such that for every $n\geq 2$ one has
\[\|\varphi_n*\varphi_{n+j}\|_{L^{p}(\R^d,w)} = C_{\varphi,p,\gamma,j} 2^{nd} 2^{-n\frac{d+\gamma}{p}}.\]
\end{lemma}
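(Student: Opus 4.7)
The plan is to reduce the lemma to a single scaling computation. First I observe that, by the construction in Definition \ref{def:Phi}, $\varphi_n(x) = 2^{(n-1)d} \varphi_1(2^{n-1} x)$ for every $n \geq 1$; applied with index shift, this also gives $\varphi_{n+j}(x) = 2^{(n-1)d} \varphi_{1+j}(2^{n-1} x)$ whenever $n+j \geq 1$, which is automatic from $n \geq 2$ and $j \in \{-1,0,1\}$. Writing $[g]_a(x) := a^d g(ax)$, this says $\varphi_n = [\varphi_1]_{2^{n-1}}$ and $\varphi_{n+j} = [\varphi_{1+j}]_{2^{n-1}}$, so the two convolution factors are scaled by the \emph{same} dilation.

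Next I use the standard identity $[f]_a * [g]_a = [f*g]_a$, verified by a single substitution $u = ay$, to conclude
\[
\varphi_n * \varphi_{n+j} = [\Phi_j]_{2^{n-1}}, \qquad \Phi_j := \varphi_1 * \varphi_{1+j}.
\]
A direct change of variables $u = ax$ then gives, for $a > 0$, $p \in [1,\infty)$,
\[
\|[\Phi_j]_a\|_{L^p(\R^d,w)}^p = a^{dp} \int_{\R^d} |\Phi_j(ax)|^p |x|^\gamma \, dx = a^{dp - d - \gamma} \|\Phi_j\|_{L^p(\R^d,w)}^p,
\]
that is, $\|[\Phi_j]_a\|_{L^p(w)} = a^{d - (d+\gamma)/p} \|\Phi_j\|_{L^p(w)}$; the endpoint $p = \infty$ is handled identically with the convention $L^\infty(w) = L^\infty$, giving $\|[\Phi_j]_a\|_{L^\infty} = a^d \|\Phi_j\|_{L^\infty}$ (which matches the formula since $2^{-n(d+\gamma)/\infty}$ is interpreted as $1$). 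Plugging $a = 2^{n-1}$ yields the claimed equality with
\[
C_{\varphi,p,\gamma,j} = 2^{-d + (d+\gamma)/p} \, \|\Phi_j\|_{L^p(\R^d,w)}.
\]

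What remains is to check that this constant lies in $(0,\infty)$. Finiteness is immediate: $\Phi_j$ is Schwartz (convolution of two Schwartz functions) and $\gamma > -d$, so $\|\Phi_j\|_{L^p(w)} < \infty$. The only nontrivial point is non-triviality, and I anticipate this to be the one place requiring genuine care. For it I would inspect Fourier supports: $\widehat{\Phi}_j$ is a nonzero constant times $\widehat{\varphi}_1 \widehat{\varphi}_{1+j}$, and from the explicit description of $\widehat{\varphi}$ in Definition \ref{def:Phi} one checks that $\widehat{\varphi}_1$ is supported in $\{1 \leq |\xi| \leq 3\}$ while $\widehat{\varphi}_{1+j}$ is supported in $\{2^{1+j-1} \leq |\xi| \leq 3 \cdot 2^{1+j-1}\}$; for each $j \in \{-1,0,1\}$ these supports overlap on a nonempty open annulus on which both factors are strictly positive, so $\Phi_j \not\equiv 0$ and $\|\Phi_j\|_{L^p(w)} > 0$. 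This completes the proof modulo the elementary bookkeeping above.
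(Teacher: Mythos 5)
Your strategy is exactly the one the paper intends: the authors dispose of this lemma with the single remark that $\wh{\varphi}_n = \wh{\varphi}_1(2^{-n+1}\cdot)$ plus a ``straightforward substitution argument'', and your write-up is the correct expansion of that remark (both convolution factors are dilates by the \emph{same} factor $2^{n-1}$, $[f]_a*[g]_a=[f*g]_a$, and the weighted norm scales as $\|[g]_a\|_{L^p(\R^d,w)} = a^{d-(d+\gamma)/p}\|g\|_{L^p(\R^d,w)}$).

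There is one slip to repair: the identity $\varphi_{n+j}=[\varphi_{1+j}]_{2^{n-1}}$ fails for $j=-1$. From Definition \ref{def:Phi} one has $\varphi_k=[\varphi_1]_{2^{k-1}}$ only for $k\geq 1$, so $\varphi_{n-1}=[\varphi_1]_{2^{n-2}}=\big[\,2^{-d}\varphi_1(\cdot/2)\,\big]_{2^{n-1}}$, and the reference function $2^{-d}\varphi_1(\cdot/2)$ is \emph{not} $\varphi_0=\varphi$: on the Fourier side $\wh{\varphi}_1(2\xi)=\wh{\varphi}(\xi)-\wh{\varphi}(2\xi)$, whereas $\wh{\varphi}_0(\xi)=\wh{\varphi}(\xi)$, which is not even supported in an annulus. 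The fix is cosmetic: set $\Phi_j:=\varphi_1 * \chi_j$ with $\chi_j(x)=2^{jd}\varphi_1(2^jx)$, so that $\chi_0=\varphi_1$, $\chi_1=\varphi_2$, $\chi_{-1}=2^{-d}\varphi_1(\cdot/2)$ and $\varphi_n*\varphi_{n+j}=[\Phi_j]_{2^{n-1}}$ for all $n\geq 2$; the rest of your computation, including the value of $C_{\varphi,p,\gamma,j}$, then goes through verbatim. A second, smaller point concerns the nondegeneracy of the constant: condition \eqref{eq:propvarphi} only gives $0\leq\wh{\varphi}\leq 1$, so you may not assert that both Fourier factors are strictly positive on the whole overlap annulus. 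But you do not need this: for instance for $j=-1$ and $1<|\xi|<3/2$ the product equals $\wh{\varphi}(\xi)\bigl(1-\wh{\varphi}(\xi)\bigr)$, and continuity of $\wh{\varphi}$ together with $\wh{\varphi}=1$ on $|\xi|\leq 1$ and $\wh{\varphi}=0$ on $|\xi|\geq 3/2$ produces a point where this is strictly positive; the cases $j=0,1$ are handled in the same way. This suffices for $\Phi_j\not\equiv 0$ and hence $C_{\varphi,p,\gamma,j}>0$.
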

\begin{proof} Since $\wh{\varphi}_n =  \wh{\varphi}_1(2^{-n+1}\cdot)$, this follows from a straightforward substitution argument.
\end{proof}

We give necessary conditions for an embedding for general $p_0$ and $p_1$.

\begin{proposition}\label{prop:necc}
Let $1< p_0,p_1 \leq \infty$  and $s_0,s_1\in \R$. Let further $w_0(x) = |x|^{\gamma_0}$ and $w_1(x) = |x|^{\gamma_1}$ with $\gamma_0,\gamma_1>-d$. Suppose
\begin{equation}\label{nec-embedd}
 B^{s_0}_{p_0,1}(\R^d,w_0) \hookrightarrow B^{s_1}_{p_1, \infty}(\R^d,w_1).
\end{equation}
Then
\begin{equation}\label{eq:necparameters}
s_0 - \frac{d+\gamma_0}{p_0} \geq s_1 - \frac{d+\gamma_1}{p_1}, \qquad
\frac{\gamma_1+d}{p_1}\leq \frac{\gamma_0+d}{p_0} \qquad  \text{and} \qquad
\frac{\gamma_1}{p_1}\leq \frac{\gamma_0}{p_0}.
\end{equation}
\end{proposition}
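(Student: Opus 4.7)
My plan is to prove each of the three inequalities in \eqref{eq:necparameters} separately, by testing the embedding \eqref{nec-embedd} against a suitable family of functions indexed by a scaling parameter, in each case choosing the test function so that its Fourier transform has localized support. This forces only finitely many of the Littlewood--Paley pieces $S_nf$ to be nonzero, which reduces the Besov norms on both sides of \eqref{nec-embedd} to (constant multiples of) a weighted $L^p$-norm whose scaling can be read off by change of variables. Fix a resolution of unity $(\varphi_n)_{n\ge 0}\in\Phi$ and write $w_\gamma(x)=|x|^\gamma$.

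\emph{Step 1 (the inequality $s_0-\tfrac{d+\gamma_0}{p_0}\ge s_1-\tfrac{d+\gamma_1}{p_1}$).} I would test on $f=\varphi_N$ for $N\to\infty$. Since $\wh\varphi_N$ lives in the dyadic annulus at scale $2^N$, only $S_n\varphi_N=\varphi_n*\varphi_N$ with $n\in\{N-1,N,N+1\}$ are nonzero, and Lemma~\ref{lem:phinphij} gives $\|\varphi_N\|_{B^s_{p,q}(\R^d,w_\gamma)}\asymp 2^{N(s+d-(d+\gamma)/p)}$ with implicit constants independent of $N$. Inserting this into \eqref{nec-embedd} for both sides and letting $N\to\infty$ gives the inequality (the common factor $2^{Nd}$ cancels).

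\emph{Step 2 (the inequality $\tfrac{d+\gamma_1}{p_1}\le\tfrac{d+\gamma_0}{p_0}$).} I would test on the dilation $g_R(x)=\varphi_0(x/R)$ for $R\to\infty$. For $R$ large, $\wh g_R(\xi)=R^d\wh\varphi_0(R\xi)$ is supported in $\{|\xi|\le 1\}$, on which $\wh\varphi_0\equiv 1$; hence $S_0g_R=g_R$ and $S_ng_R=0$ for $n\ge 1$, giving
\[
\|g_R\|_{B^s_{p,q}(\R^d,w_\gamma)}=\|g_R\|_{L^p(\R^d,w_\gamma)}=R^{(d+\gamma)/p}\,\|\varphi_0\|_{L^p(\R^d,w_\gamma)}
\]
by a change of variable, with no $s$-dependence. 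Applying the embedding and sending $R\to\infty$ gives the inequality.

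\emph{Step 3 (the inequality $\tfrac{\gamma_1}{p_1}\le\tfrac{\gamma_0}{p_0}$).} I would test on the translate $h_a(x)=\varphi_1(x-a)$ for $|a|\to\infty$. Translation preserves the Fourier support of $\varphi_1$, so only $S_nh_a$ for $n\in\{0,1,2\}$ are nonzero, and each equals $\psi_n(\,\cdot-a)$ for a fixed Schwartz function $\psi_n=\varphi_n*\varphi_1$. Splitting the integral $\int|\psi_n(x-a)|^p|x|^\gamma\,dx$ according to whether $|x-a|\le|a|/2$ (where $|x|\asymp|a|$, producing the main term $\asymp|a|^\gamma\|\psi_n\|_{L^p}^p$) or $|x-a|>|a|/2$ (where $|\psi_n(y)|\le C_M(1+|y|)^{-M}$ absorbs both the polynomial growth and the possible singularity of $|x|^\gamma$) yields $\|h_a\|_{B^s_{p,q}(\R^d,w_\gamma)}\asymp|a|^{\gamma/p}$, and sending $|a|\to\infty$ in \eqref{nec-embedd} gives the inequality.

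The main technical point is the tail bound in Step~3 when $\gamma\in(-d,0)$: one must check that the weight singularity near $x=0$ (which lies far from $a$) does not spoil the main estimate. This is controlled by the rapid decay of $\psi_n$ on the set $\{|y|\ge|a|/2\}$ combined with the integrability of $|z|^\gamma$ on bounded neighborhoods of the origin, both of which require only $\psi_n\in\Schw(\R^d)$ and $\gamma>-d$.
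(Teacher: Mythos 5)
Your proof is correct and follows essentially the same route as the paper's: one tests the embedding on the Littlewood--Paley pieces $\varphi_N$ themselves, on dilations to large spatial scales, and on far translations of a fixed band-limited function, in each case using the localized Fourier support to collapse both Besov norms to weighted $L^p$-norms whose growth in the parameter is then compared. The only differences are cosmetic choices of test functions (the paper uses $t^{d}\varphi(t\cdot)$ with $t\downarrow 0$ in place of your $\varphi_0(\cdot/R)$, and $f(\cdot-\lambda e_1)$ for a general Schwartz $f$ with compactly supported Fourier transform in place of your $\varphi_1(\cdot-a)$), and your two-sided estimate $\|\varphi_1(\cdot-a)\|_{L^p(\R^d,w_\gamma)}\asymp |a|^{\gamma/p}$, including the tail analysis near the weight's singularity for $\gamma\in(-d,0)$, matches the paper's estimate \eqref{4}.
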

\begin{proof}
Let $(\varphi_n)_{n \geq 0}\in \Phi$. By \eqref{nec-embedd}, for every $n\geq 0$ one has
\[\|\varphi_n\|_{B^{s_1}_{p_1, \infty}(\R^d,w_1)} \leq C \|\varphi_n\|_{B^{s_0}_{p_0, 1}(\R^d,w_0)}.\]
By the assumption on the support of $(\hat{\varphi}_n)_{n\geq 0}$ one gets that
\[2^{n s_1}\|\varphi_n*\varphi_n\|_{L^{p_1}(\R^d,w_1)}\leq C 2^{n s_0} \sum_{j=-1}^{1} \|\varphi_n*\varphi_{n+j}\|_{L^{p_0}(\R^d,w_0)}, \qquad n\geq 0.\]
Using Lemma \ref{lem:phinphij}, this implies that there is a constant $\tilde{C}$ such that
\[2^{n s_1} 2^{nd} 2^{-n\frac{d+\gamma_1}{p_1}} \leq \tilde{C} 2^{n s_0} 2^{nd} 2^{-n\frac{d+\gamma_0}{p_0}}.\]
Letting $n$ tend to infinity gives $s_1 - \frac{d+\gamma_1}{p_1}\leq s_0 - \frac{d+\gamma_0}{p_0}$.

We next show ${\gamma_1}/{p_1}\leq {\gamma_0}/{p_0}$. Let $f\in \Schw(\R^d)$ be such that $\widehat{f}$ has support in $\{x\in \R^d: |x|<1\}$.
Let $\lambda\geq 1$ and $e_1 = (1, 0,\ldots, 0)$. Then $\supp (\F ( f(\cdot-\lambda e_1)))=  \supp(\widehat{f})$.
By \eqref{nec-embedd} one has
\[\|f(\cdot-\lambda e_1)\|_{B^{s_1}_{p_1, \infty}(\R^d,w_1)} \leq C \|f(\cdot-\lambda e_1)\|_{B^{s_0}_{p_0, 1}(\R^d,w_0)},\]
and as before this yields
\begin{align}\label{eq:estfsupp}
\|f(\cdot-\lambda e_1)\|_{L^{p_1}(\R^d,w_1)}\leq C \|f(\cdot-\lambda e_1)\|_{L^{p_0}(\R^d,w_0)}.
\end{align}
Let $p\in (1, \infty]$, $\gamma> -d$ and $w(x) = |x|^{\gamma}$. We claim that there are constants $c,C>0$, depending on $p,\gamma,d,f$, such that for all $\lambda\geq 1$ one has
\begin{equation}\label{4}
 c \lambda^{\gamma/p} \leq \|f(\cdot-\lambda e_1)\|_{L^{p}(\R^d,w)}\leq  C(1+\lambda^{\gamma})^{1/p}.
\end{equation}
From \eqref{eq:estfsupp} and \eqref{4} it would follow that for all $\lambda\geq 1$ we have
$c \lambda^{\gamma_1/p_1} \leq C (1+\lambda^{\gamma_0})^{1/p_0}$.
Letting $\lambda$ tend to infinity then gives ${\gamma_1}/{p_1}\leq {\gamma_0}/{p_0}$.

Estimate \eqref{4} is trivial for $p=\infty$. To prove \eqref{4} for $p<\infty$, we first consider the case $\gamma\geq 0$. We estimate
\begin{align*}
  \|f(\cdot-\lambda e_1)\|_{L^{p}(\R^d,w)}^{p} & = \int_{\R^d} |f(x-\lambda e_{1})|^{p} |x|^{\gamma} \, dx
  = \int_{\R^d} |f(x)|^{p} |x+\lambda e_1|^{\gamma} \, dx
  \\ & \leq  C_{\gamma} \int_{\R^d} |f(x)|^{p} \big(|x|^{\gamma}  + \lambda^{\gamma} \big) \, dx = C_{p,\gamma,d,f} (1+\lambda^{\gamma}).
\end{align*}
On the other hand,
\begin{align*}
\|f(\cdot-\lambda e_1)\|_{L^{p}(\R^d,w)}^{p} & =  \int_{\R^d} |f(x)|^{p} |x+\lambda e_1|^{\gamma} \, dx
\geq \int_{[0,1]^d} |f(x)|^{p} |x+\lambda e_1|^{\gamma} \, dx
\\ & \geq \int_{[0,1]^d} |f(x)|^{p} \lambda^{\gamma} \, dx = C_{d,p,f} \lambda^{\gamma}.
\end{align*}
Next we consider the case $\gamma<0$. Since $f\in \Schw(\R^d)$, there is a $C$ such that one has
$|f(x)|^p\leq C(1+|x|)^{-d p}$ for $x\in \R^d$. By Lemma \ref{lem:Tx} we can estimate
\begin{align*}
\|f(\cdot-\lambda e_1)\|_{L^{p}(\R^d,w)}^{p} & \leq C \int_{\R^d}  (1+|x|)^{-dp} |x+\lambda e_1|^{\gamma} \, dx
\\ &  = C \int_{\R^d}  (1+|y-\lambda e_1|)^{-dp} |y|^{\gamma} \, dx  \leq C\int_{B(\lambda e_1, 1)} |y|^{\gamma}\, dy
\leq C(1+\lambda)^{\gamma}
\end{align*}
for $\lambda\geq 1$. For the lower estimate we have
\begin{align*}
\|f(\cdot-\lambda e_1)\|_{L^{p}(\R^d,w)}^{p} & \geq \int_{[0,1]^d} |f(x)|^{p} |x+\lambda e_1|^{\gamma} \, dx
\\ & \geq \int_{[0,1]^d} |f(x)|^{p} (1+\lambda)^{\gamma} \, dx = C_{d,p,f} (1+\lambda)^{\gamma} \geq C_{d,p,f,\gamma} \lambda^{\gamma}.
\end{align*}
This completes the proof of \eqref{4} and therefore the proof of ${\gamma_1}/{p_1}\leq {\gamma_0}/{p_0}$.

Next let $\varphi$ be as in Definition \ref{def:Phi} and for each $t>0$ define
$f_t:\R^d\to \C$ by $f_{t}(x) := t^{n} \varphi(t x)$. By \eqref{nec-embedd} one has
$\|f_t\|_{B^{s_1}_{p_1, \infty}(\R^d,w_1)} \leq C \|f_t\|_{B^{s_0}_{p_0, 1}(\R^d,w_0)}$.
Taking $t>0$ small enough it follows that
\[\|f_t\|_{L^{p_1}(\R^d,w_1)}\leq C \|f_t\|_{L^{p_0}(\R^d,w_0)}.\]
Rescaling gives
\[t^{-\frac{\gamma_1 +d}{p_1}}\|\varphi\|_{L^{p_1}(\R^d,w_1)}\leq C t^{- \frac{\gamma_0 + d}{p_0}} \|\varphi\|_{L^{p_0}(\R^d,w_0)}.\]
Letting $t\downarrow 0$ implies that $\frac{\gamma_1 +d}{p_1}\leq
\frac{\gamma_0 + d}{p_0}$.\end{proof}

\begin{remark} In the above proof, the assumption $p_0,p_1 > 1$ was only employed to show $\frac{\gamma_1}{p_1}\leq \frac{\gamma_0}{p_0}$ in case $\gamma_0 <0$. \end{remark}

For  $p_1<p_0$ we can sharpen the necessary condition \eqref{eq:necparameters} for an embedding.

\begin{proposition}\label{prop:neccnonopt2}
Let $1<p_1<p_0 <\infty$ and $s_0,s_1\in \R$. Let
further $w_0(x) = |x|^{\gamma_0}$ and $w_1(x) = |x|^{\gamma_1}$ with
$\gamma_0,\gamma_1>-d$. Suppose
\begin{equation}\label{nec-embedd2}
 B^{s_0}_{p_0,1}(\R^d,w_0) \hookrightarrow B^{s_1}_{p_1, \infty}(\R^d,w_1).
\end{equation}
Then
\begin{equation}\label{eq:necparameters2}
s_0 - \frac{d+\gamma_0}{p_0} \geq s_1 - \frac{d+\gamma_1}{p_1} \qquad
\text{and} \qquad  \frac{\gamma_1+d}{p_1}<\frac{\gamma_0+d}{p_0}.
\end{equation}
\end{proposition}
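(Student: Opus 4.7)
The first inequality $s_0 - \tfrac{d+\gamma_0}{p_0} \geq s_1 - \tfrac{d+\gamma_1}{p_1}$ is inherited from Proposition~\ref{prop:necc}, which also gives the non-strict version $\tfrac{d+\gamma_1}{p_1} \leq \tfrac{d+\gamma_0}{p_0}$. The new content of this proposition is therefore to upgrade the latter to strict inequality under $p_1<p_0$. I argue by contradiction: assume $\beta := \tfrac{d+\gamma_0}{p_0} = \tfrac{d+\gamma_1}{p_1}$ and construct a band-limited family $(f_L)$ that violates \eqref{nec-embedd2}.

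The main reduction is that any $f \in \TD(\R^d)$ with $\supp \hat{f} \subseteq B(0,1)$ satisfies $S_0 f = f$ and $S_k f = 0$ for $k \geq 1$, so $\|f\|_{B^{s}_{p,q}(w)} = \|f\|_{L^p(w)}$ for every admissible $s,q,p,w$. Hence \eqref{nec-embedd2} implies $\|f\|_{L^{p_1}(w_1)} \leq C\|f\|_{L^{p_0}(w_0)}$ for all $f$ with $\supp \hat{f} \subseteq B(0,1)$, and it suffices to find such $f_L$ with $\|f_L\|_{L^{p_0}(w_0)}$ uniformly bounded but $\|f_L\|_{L^{p_1}(w_1)}\to\infty$.

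Fix $\eta \in \Schw(\R^d)$ with $\supp \hat{\eta} \subseteq B(0,1)$ and $\eta(0)=1$. Since $p_1<p_0$ we pick $s \in (1/p_0,1/p_1)$, so $sp_0>1$ and $sp_1<1$. Pick $M \in \N$ with $\tfrac{M(1-sp_1)}{sp_1} > d\bigl(\tfrac{1}{p_1}-\tfrac{1}{p_0}\bigr)$. For $L\geq 2$, set $K_L := \lfloor\exp(L^{M/s})\rfloor$ and
\[
f_L(x) := \sum_{\substack{k\in\Z^d\\ 2 \leq |k| \leq K_L}} |k|^{-\beta}(\log|k|)^{-s}\,\eta(x - Lk).
\]
Then $\supp \hat{f}_L \subseteq B(0,1)$, as the sum only produces modulations of $\hat\eta$. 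Using the Schwartz bound $|\eta(y)|\leq C_M(1+|y|)^{-M}$, the interference at $x \in B(Lk_0,1)$ from the other translates is of order $C L^{-M}|k_0|^{-\beta}$, which stays below half of the main term $|k_0|^{-\beta}(\log|k_0|)^{-s}$ whenever $|k_0|\leq K_L$, by our choice of $K_L$. Since $\beta p_j - \gamma_j = d$ for $j=0,1$, this gives
\begin{align*}
\|f_L\|_{L^{p_0}(w_0)}^{p_0} &\leq C L^{\gamma_0} \sum_{|k|\geq 2}|k|^{-d}(\log|k|)^{-sp_0} \leq C' L^{\gamma_0} \quad (\text{since } sp_0>1),\\
\|f_L\|_{L^{p_1}(w_1)}^{p_1} &\geq c L^{\gamma_1} \sum_{2\leq|k|\leq K_L} |k|^{-d}(\log|k|)^{-sp_1} \geq c' L^{\gamma_1}(\log K_L)^{1-sp_1},
\end{align*}
the latter equalling $c' L^{\gamma_1 + M(1-sp_1)/s}$.

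Using the borderline identity $\tfrac{\gamma_1}{p_1}-\tfrac{\gamma_0}{p_0} = -d\bigl(\tfrac{1}{p_1}-\tfrac{1}{p_0}\bigr)$, taking $p_j$-th roots and forming the ratio yields
\[
\frac{\|f_L\|_{L^{p_1}(w_1)}}{\|f_L\|_{L^{p_0}(w_0)}} \gtrsim L^{-d(1/p_1 - 1/p_0) + M(1-sp_1)/(sp_1)},
\]
whose exponent is strictly positive by the choice of $M$. Thus the ratio tends to $\infty$ as $L\to\infty$, contradicting the embedding. The main obstacle is the cross-term analysis: the calibrated scaling $K_L = \exp(L^{M/s})$ is exactly what balances the polynomial $L^{-M}$ decay of the $\eta$-tails against the weaker logarithmic decay $(\log|k_0|)^{-s}$ of the main term, which is why one must truncate the sum rather than work with the full series over $\Z^d$.
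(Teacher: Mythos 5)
Your argument is correct, and for the substantive part of the statement it takes a genuinely different route from the paper. Both proofs begin identically: the inequality $s_0 - \tfrac{d+\gamma_0}{p_0} \geq s_1 - \tfrac{d+\gamma_1}{p_1}$ and the non-strict bound $\tfrac{d+\gamma_1}{p_1}\leq\tfrac{d+\gamma_0}{p_0}$ come from Proposition \ref{prop:necc}, and the borderline case is reduced to the failure of the two-weight inequality $\|f\|_{L^{p_1}(w_1)}\leq C\|f\|_{L^{p_0}(w_0)}$ for band-limited $f$. The divergence is in how that failure is exhibited. The paper first extends the inequality from band-limited functions to \emph{all} of $L^{p_0}(\R^d,w_0)$ via a separate density result (Lemma \ref{lem:densityFouriercompact}, whose proof relies on the power-weight structure), and then disproves it with a single non-band-limited function $|x|^{-(d+\gamma_0)/p_0}\log(1/|x|)^{-1/p_1}\one_{[0,1/2]}(|x|)$ concentrated at the origin. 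You instead stay inside the band-limited class and transplant the same critical-power-times-logarithm profile to infinity, realized as coefficients of a lattice sum of translates of a fixed bump with $\supp\hat\eta\subseteq B(0,1)$. What your approach buys is self-containedness — no density lemma is needed — at the price of an almost-orthogonality analysis and the calibrated truncation $K_L$; the paper's approach buys a one-line counterexample at the price of an auxiliary approximation result.

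Two small points you should tighten, neither of which is a genuine gap. First, the cross-term bound $CL^{-M}|k_0|^{-\beta}$ and the negligibility of the interference in the $L^{p_0}(w_0)$ upper bound each impose their own largeness conditions on $M$ (roughly $M>\max\{d,\beta\}$ plus a condition ensuring $L^{-Mp_0+d+\gamma_0}\log K_L\lesssim L^{\gamma_0}$, which uses $sp_0>1$); these are all lower bounds on $M$ and hence compatible with your stated choice, but they should be recorded. Second, with $K_L=\lfloor\exp(L^{M/s})\rfloor$ the requirement $C L^{-M}|k_0|^{-\beta}\leq\tfrac12|k_0|^{-\beta}(\log|k_0|)^{-s}$ for all $|k_0|\leq K_L$ forces the implicit constant $C$ to be absorbed; take $K_L=\lfloor\exp(\delta L^{M/s})\rfloor$ with $\delta=(4C)^{-1/s}$ (or restrict to $L\geq L_0$), which changes nothing in the final exponent $-d(1/p_1-1/p_0)+M(1-sp_1)/(sp_1)>0$. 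Likewise the lower bound for the main term should be taken on a ball $B(Lk_0,r_0)$ on which $|\eta|\geq\tfrac12$, using $\eta(0)=1$ and continuity.
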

\begin{remark}Observe here that $\frac{\gamma_1}{p_1} < \frac{\gamma_0}{p_0}$ is already a consequence of $\frac{\gamma_1+d}{p_1}<\frac{\gamma_0+d}{p_0}$ and $p_1<p_0$. \end{remark}

To prove the proposition, we need the following density result. Observe that the proof heavily depends on the fact that the weight is of power type.
\begin{lemma}\label{lem:densityFouriercompact}
Let $p\in [1, \infty)$, $\gamma>-d$ and let $w(x) = |x|^{\gamma}$. The set
\[\F C^\infty_c(\R^d) := \{f\in \Schw(\R^d):\; \widehat{f} \, \emph{\text{ has compact support}}\}\]
is dense in $L^p(\R^d,w)$.
\end{lemma}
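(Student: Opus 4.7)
The plan is to approximate a general $L^p(\R^d,w)$ function in two stages: first by a Schwartz function, and then by modifying the Fourier transform via a compactly supported cutoff.

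The first reduction is free: the paper already recalls (in the proof of Lemma \ref{density}, citing \cite[Exercise 9.4.1]{GraModern}) that $\Schw(\R^d)$ is dense in $L^p(\R^d,w)$ for every $A_\infty$-weight, and in particular for $w(x)=|x|^{\gamma}$, $\gamma>-d$. So it suffices to show that any $f\in \Schw(\R^d)$ can be approximated in $L^p(\R^d,w)$ by elements of $\F C^\infty_c(\R^d)$.

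For the second stage, fix $\chi\in C^\infty_c(\R^d)$ with $\chi=1$ on $B(0,1)$, and for $R>0$ set $\chi_R(\xi)=\chi(\xi/R)$. Given $f\in \Schw(\R^d)$, define
\[
f_R := \F^{-1}(\chi_R \widehat{f}).
\]
Since $\widehat{f}\in\Schw$ and $\chi_R\in C^\infty_c$, the product $\chi_R\widehat{f}$ lies in $C^\infty_c(\R^d)$, so $f_R\in \Schw(\R^d)$ and $\widehat{f_R}$ has compact support; hence $f_R\in \F C^\infty_c(\R^d)$. It remains to show $f_R\to f$ in $L^p(\R^d,w)$.

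I would do this by first checking that $f-f_R\to 0$ in the Schwartz topology. Indeed, $\F(f-f_R)=(1-\chi_R)\widehat{f}$ vanishes on $B(0,R)$, while on $|\xi|\geq R$ the rapid decay of $\widehat{f}$ and its derivatives, together with the uniform bounds $|D^{\alpha}\chi_R|\leq C_\alpha R^{-|\alpha|}\leq C_\alpha$, yield $\sup_{|\alpha|\leq N}\sup_{\xi}(1+|\xi|)^{k}|D^{\alpha}[(1-\chi_R)\widehat{f}](\xi)|\to 0$ as $R\to\infty$ for every $k,N$. Since $\F^{-1}$ is continuous on $\Schw(\R^d)$, we conclude $f-f_R\to 0$ in $\Schw(\R^d)$. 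Finally, Schwartz convergence implies $L^p(\R^d,w)$-convergence in the power-weight case: if $g_n\to 0$ in $\Schw$, then for any $k$ one has $|g_n(x)|\leq \varepsilon_n(1+|x|)^{-k}$ with $\varepsilon_n\to 0$, and
\[
\|g_n\|_{L^p(\R^d,w)}^p \leq \varepsilon_n^{p}\int_{\R^d}(1+|x|)^{-kp}|x|^{\gamma}\,dx,
\]
which is finite once $k$ is chosen with $kp>d+\gamma$, using precisely $\gamma>-d$ for integrability at the origin. This yields $\|f-f_R\|_{L^p(\R^d,w)}\to 0$ and finishes the proof.

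The only delicate point is the last step, which explains why the hypothesis $\gamma>-d$ (noted in the lemma via ``the proof heavily depends on the fact that the weight is of power type'') is used: a general $A_\infty$-weight need not be dominated by $|x|^{\gamma}$ away from the origin, and one would need a separate truncation argument in that generality, whereas for power weights the decay of Schwartz functions alone suffices.
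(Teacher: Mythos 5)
Your proof is correct and follows essentially the same route as the paper: your $f_R=\F^{-1}(\chi_R\widehat f)$ is exactly the paper's approximant $\zeta_n*f$ written on the Fourier side, and your final step (Schwartz seminorm control times the finiteness of $\int_{\R^d}(1+|x|)^{-kp}|x|^{\gamma}\,dx$ for $kp>d+\gamma$) is precisely the paper's estimate via $\sup_x|(|x|^{2k}+1)(f-f_n)(x)|$ and the Leibniz/dominated-convergence argument for $\widehat f-\widehat f_n$. No gaps; your packaging via convergence in the Schwartz topology and continuity of $\F^{-1}$ on $\Schw(\R^d)$ is a clean equivalent of the paper's hands-on computation.
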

\begin{proof}
By Lemma \ref{density}, for the assertion it suffices to consider $f\in \Schw(\R^d)$. We construct a sequence $(f_n)_{n\geq 0}$ of functions in $\F C^\infty_c(\R^d)$ such that $f = \lim_{n\to \infty} f_n$ in $L^p(\R^d,w)$. We proceed as in \cite[Section 6]{MWY}, where a much stronger result has been obtained for the one-dimensional setting.
Let $\zeta\in \Schw(\R^d)$ be such that $\widehat{\zeta}(\xi) = 1$  if $|\xi|\leq 1$, and $\widehat{\zeta}(\xi) = 0$ if $|x|\geq 2$. Let $\zeta_n = n^d \zeta(n x)$ and $f_n = \zeta_n*f$. Choose an integer $k\geq 0$ so large that $ - 2kp + \gamma<-d$. Observe that
\begin{align*}
\|f- f_n\|_{L^p(\R^d,w)} & \leq \sup_{x\in \R^d}|(|x|^{2k}+1) (f(x)-f_n(x) )| \Big(\int_{\R^d} \frac{|x|^{\gamma}}{(|x|^{2k}+1)^p} \, dx\Big)^{1/p}
\\ & \leq C \sup_{x\in \R^d}|(|x|^{2k}+1) (f(x)-f_n(x) )|
\\ & \leq C\| [(-\Delta)^k +1] (\wh{f}-\wh{f}_n)\|_{L^1(\R^d)}
\\ & \leq C\| \Delta^k (\wh{f}-\wh{f_n})\|_{L^1(\R^d)} + \|(\wh{f}-\wh{f}_n)\|_{L^1(\R^d)},
\end{align*}
where we used $\F\big[(|\cdot|^{2k}+1) (f-f_n)\big] = ((-\Delta)^k +1) (\hat{f} - \hat{f}_n)$.
It suffices to show that for any multiindex $a$ one has
\[\lim_{n\to \infty}\|D^{a}(\wh{f}-\wh{f_n})\|_{L^1(\R^d)} =0.\]
Note that $D^{a}[\widehat{f}-\widehat{f}_n] = D^{a}[\widehat{f} (1-\widehat{\zeta}_n)]$. From the Leibniz rule we see that $D^{a}(\widehat{f}-\widehat{f_n})$ consists of finitely many terms of the form $D^{b}\widehat{f} \, D^c (1-\widehat{\zeta}_n)$, where $b,c$ are multiindices. One has
\[\| D^c (1-\widehat{\zeta}_n)\|_{L^1(\R^d)} \leq  \|D^{b}\widehat{f}\|_{L^\infty(\R^d)} \| D^c (1-\widehat{\zeta}_n)\|_{L^1(\R^d)},\]
and the latter converges to zero as $n$ tends to infinity by the dominated convergence theorem.
\end{proof}

\begin{proof}[Proof of Proposition \ref{prop:neccnonopt2}]
In Proposition \ref{prop:neccnonopt} we have already seen that
\[s_0 - \frac{d+\gamma_0}{p_0} \geq s_1 - \frac{d+\gamma_1}{p_1}, \qquad
\text{and} \qquad  \frac{\gamma_1+d}{p_1}\leq \frac{\gamma_0+d}{p_0}
\]
Assume that $\frac{\gamma_1+d}{p_1}=\frac{\gamma_0+d}{p_0}$ holds true. We show that this leads to a contradiction. Let $f\in L^{p_0}(\R^d;w_0)$ be such that
$\supp(\widehat{f})\subset \{\xi\in \R^d:|\xi|\leq 1\}$. Then it follows from
\eqref{nec-embedd2} that \eqref{eq:twoweightnikolskii} holds with $t=1$. By
scaling we see that for all $f\in L^{p_0}(\R^d,w_0)$ such that $\widehat{f}$
has compact support one has
\begin{equation}\label{eq:estwithdelta0}
\|f\|_{L^{p_1}(\R^d,w_1)}\leq C \|f\|_{L^{p_0}(\R^d,w_0)}.
\end{equation}
From Lemma \ref{lem:densityFouriercompact} we see that \eqref{eq:estwithdelta0} extends to all $f\in L^{p_0}(\R^d,w_0)$.
Now define $f:\R^d\to \R$ by $f(x) = |x|^{-d/p_0}
\log(1/|x|)^{-1/p_1} \one_{[0,1/2]}(|x|)$. Then using polar coordinates one easily checks that $f\in
L^{p_0}(\R^d,w_0)$, but $f\notin L^{p_1}(\R^d,w_1)$ which contradicts
\eqref{eq:estwithdelta0}.
\end{proof}

We can finish the proof of the necessary conditions for the embeddings of Besov spaces.
\begin{proof}[Proof of Theorem \ref{thm:main1} (1) $\Rightarrow$ (2)]
It suffices to consider $X=\C$. It follows from \eqref{eq:introBesov} and Proposition \ref{prop:elementaryembeddingBTL} that
$$B_{p_0,1}^{s_0}(\R^d,w_0) \hookrightarrow B_{p_1,\infty}^{s_1}(\R^d,w_1).$$
From Proposition \ref{prop:necc} we see that \eqref{eq:necparameters} holds.
Now there are two possibilities: either (i) $\frac{d+\gamma_1}{p_1} < \frac{d+\gamma_0}{p_0}$, or (ii) $\frac{d+\gamma_1}{p_1} = \frac{d+\gamma_0}{p_0}$.

Suppose that (i) holds. If $s_0 - \frac{d+\gamma_0}{p_0} > s_1 - \frac{d+\gamma_1}{p_1}$, then \eqref{cond:exp} follows. If $s_0 - \frac{d+\gamma_0}{p_0} = s_1 - \frac{d+\gamma_1}{p_1}$, then to obtain \eqref{cond:expnew} we have to show that $q_0 \leq q_1$. Let $(\varphi_n)_{n\geq 0}$ be as in Definition \ref{def:Phi}. For a sequence $(a_j)$ and $N\in \N$, define the function $f = \sum_{j=1}^N 2^{-3j(d + s_0 - \frac{d+\gamma_0}{p_0})}a_j \varphi_{3j}$. We have $\varphi_n * \varphi_{3j}\neq 0$ only for $n= 3j + l $ with $l\in \{-1,0,1\}$, and by Lemma \ref{lem:phinphij}, $\|\varphi_{3j+l} * \varphi_{3j}\|_{L^{p}(\R^d,w)} = C\, 2^{3j d} 2^{-3j \frac{d+\gamma}{p}}$. It follows that
$$ \|(a_j)_{j\leq N}\|_{\ell^{q_1}} \leq C  \|(a_j)_{j\leq N}\|_{\ell^{q_0}},$$
with a constant $C$ independent of $N$ and $a_j$. But this is only possible for $q_0 \leq q_1$.

Now suppose that (ii) holds. Then $\frac{\gamma_1}{p_1}\leq \frac{\gamma_0}{p_0}$ yields $p_0\geq p_1$. If $p_0>p_1$, then Proposition \ref{prop:neccnonopt2} yields $\frac{d+\gamma_1}{p_1} < \frac{d+\gamma_0}{p_0}$ and this contradicts (ii). If $p_0 = p_1$, then $\gamma_0 = \gamma_1$ follows from (ii) and therefore, $s_0\geq s_1$ by \eqref{eq:necparameters}. If $s_0 = s_1$, then it follows as above that $q_0 \leq q_1$. Hence \eqref{cond:trivial} is valid.\end{proof}

\section{Embeddings for Triebel-Lizorkin spaces -- Proof of Theorem \ref{thm:main2}\label{sec:TriebelLizorkin}}

\subsection{Sufficient conditions} In the proof of the sufficiency part of Theorem \ref{thm:main2} we employ ideas from \cite{BM01} and \cite{SchmSi05}. One has the following Gagliardo-Nirenberg type inequality for spaces with weights.

\begin{proposition}\label{prop:interpolationineq} Let $X$ be a Banach space, $q, q_0, q_1\in [1,\infty]$ and $\theta\in (0,1)$. Let $p,p_0, p_1\in (1, \infty)$ and  $-\infty<s_0<s_1<\infty$  satisfy
\[\frac{1}{p} =  \frac{1-\theta}{p_0} + \frac{\theta}{p_1} \ \ \ \text{and} \ \ \ s = (1-\theta)s_0 + \theta s_1.\]
Let further $w, w_0, w_1\in A_{\infty}$ be such that $w= w_0^{(1-\theta)p/p_0} w_1^{\theta p/p_1}$.
Then there exists a constant $C$ such that for all $f\in \TD(\R^d;X)$ one has
\[\|f\|_{F^{s}_{p,q}(\R^d,w;X)} \leq C \|f\|_{F^{s_0}_{p_0,q_0}(\R^d, w_0;X)}^{1-\theta} \|f\|_{F^{s_1}_{p_1,q_1}(\R^d, w_1;X)}^\theta.\]
In particular, one can take $w = w_0 = w_1$.
\end{proposition}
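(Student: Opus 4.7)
The plan is to derive the inequality from a pointwise identity on the Littlewood--Paley blocks followed by two H\"older inequalities, one in the sequence variable and one in the spatial variable; the weight decomposition $w=w_0^{(1-\theta)p/p_0}w_1^{\theta p/p_1}$ and the relation $\tfrac{1}{p}=\tfrac{1-\theta}{p_0}+\tfrac{\theta}{p_1}$ are exactly the ingredients that make the second H\"older step work.

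Fix $(\varphi_k)_{k\ge 0}\in \Phi$ and set
\[
a_k(x):=2^{ks_0}\|S_kf(x)\|,\qquad b_k(x):=2^{ks_1}\|S_kf(x)\|,\qquad c_k(x):=2^{ks}\|S_kf(x)\|.
\]
Since $s=(1-\theta)s_0+\theta s_1$, one has the trivial pointwise factorization $c_k(x)=a_k(x)^{1-\theta}b_k(x)^{\theta}$. Taking the $\ell^q$-norm in $k$: under the compatibility relation $\tfrac{1}{q}=\tfrac{1-\theta}{q_0}+\tfrac{\theta}{q_1}$, H\"older's inequality in the counting measure on $\N_0$ with conjugate exponents $\tfrac{q_0}{q(1-\theta)}$ and $\tfrac{q_1}{q\theta}$ yields the pointwise bound
\[
\bigl\|(c_k(x))_k\bigr\|_{\ell^q}\le \bigl\|(a_k(x))_k\bigr\|_{\ell^{q_0}}^{1-\theta}\,\bigl\|(b_k(x))_k\bigr\|_{\ell^{q_1}}^{\theta}.
\]
For general $q,q_0,q_1\in[1,\infty]$ one reduces to this compatible case by Proposition~\ref{prop:elementaryembeddingBTL}(1): replacing $q_0,q_1$ by $q_0'\ge q_0$, $q_1'\ge q_1$ that meet the compatibility only weakens the right-hand side, since $\|\cdot\|_{\ell^{q_i'}}\le \|\cdot\|_{\ell^{q_i}}$ when $q_i\le q_i'$.

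Next, raise this estimate to the $p$-th power, multiply by $w$, and integrate. Writing $w=w_0^{(1-\theta)p/p_0}w_1^{\theta p/p_1}$ and using $\tfrac{(1-\theta)p}{p_0}+\tfrac{\theta p}{p_1}=1$, H\"older's inequality in $L^1(\R^d)$ with exponents $\tfrac{p_0}{(1-\theta)p}$ and $\tfrac{p_1}{\theta p}$ factorizes the integral as
\[
\int_{\R^d}\|(a_k)\|_{\ell^{q_0}}^{(1-\theta)p}\|(b_k)\|_{\ell^{q_1}}^{\theta p}w\,dx \;\le\; \|f\|_{F^{s_0}_{p_0,q_0}(\R^d,w_0;X)}^{(1-\theta)p}\,\|f\|_{F^{s_1}_{p_1,q_1}(\R^d,w_1;X)}^{\theta p},
\]
and extracting the $p$-th root gives the claim.

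The one step I expect to be delicate is the sequence-variable H\"older when $q<\bigl(\tfrac{1-\theta}{q_0}+\tfrac{\theta}{q_1}\bigr)^{-1}$, since then no choice of $q_0'\ge q_0$, $q_1'\ge q_1$ meets the compatibility and the $\ell^q$-estimate cannot be read off directly. In that regime one has to perturb the pair $(s_0,s_1)$ by a small $\varepsilon$ and trade the resulting loss against the microscopic index via Proposition~\ref{prop:elementaryembeddingBTL}(2), exploiting $s_0<s_1$ to keep the convex combination $s=(1-\theta)s_0+\theta s_1$ fixed. The rest of the proof is simply the two H\"older applications tuned to the assumed relations between $p,p_0,p_1,\theta$ and the weight decomposition.
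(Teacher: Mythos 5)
Your overall architecture --- pointwise factorization of the Littlewood--Paley blocks, an estimate in the sequence variable, then a spatial H\"older inequality driven by the weight decomposition $w=w_0^{(1-\theta)p/p_0}w_1^{\theta p/p_1}$ and $\frac{(1-\theta)p}{p_0}+\frac{\theta p}{p_1}=1$ --- is exactly the paper's, and the spatial step is correct. The genuine gap is in the sequence-variable step, precisely where you flag it. Your H\"older argument in $\ell^q$ works only when $\frac1q\le\frac{1-\theta}{q_0}+\frac{\theta}{q_1}$ (after enlarging $q_0,q_1$ to meet equality), whereas the proposition is stated --- and, in the proof of Theorem \ref{thm:main2}, actually used --- outside this regime (e.g.\ $q=1$ on the left with large $q_0,q_1$ on the right). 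Your proposed repair does not close this: any perturbation $s_0\mapsto s_0+a$, $s_1\mapsto s_1+b$ keeping $s=(1-\theta)s_0+\theta s_1$ fixed forces $(1-\theta)a=-\theta b$, so one of $a,b$ is strictly positive, and the corresponding right-hand norm with \emph{increased} smoothness is not dominated by $\|f\|_{F^{s_i}_{p_i,q_i}}$; conversely, lowering both $s_0$ and $s_1$ (which is what Proposition \ref{prop:elementaryembeddingBTL}(2) lets you pay for a better microscopic index) lowers $s$ and only bounds $\|f\|_{F^{s-\delta}_{p,q}}$, not $\|f\|_{F^{s}_{p,q}}$.

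What is actually needed is the Gagliardo--Nirenberg-type sequence inequality of Brezis and Mironescu (\cite[Lemma 3.7]{BM01}), which is what the paper invokes: for $s_0<s_1$, $\theta\in(0,1)$, $s=(1-\theta)s_0+\theta s_1$ and \emph{every} $q\in[1,\infty]$,
\[
\|(2^{sj}a_j)_{j\geq 0}\|_{\ell^q}\le C\,\|(2^{s_0j}a_j)_{j\geq 0}\|_{\ell^\infty}^{1-\theta}\,\|(2^{s_1j}a_j)_{j\geq 0}\|_{\ell^\infty}^{\theta}.
\]
This is not a H\"older inequality. Writing $A=\sup_j 2^{s_0j}|a_j|$ and $B=\sup_j 2^{s_1j}|a_j|$, one has
\[
2^{sj}|a_j|\le\min\bigl(2^{\theta(s_1-s_0)j}A,\;2^{-(1-\theta)(s_1-s_0)j}B\bigr),
\]
and splitting the sum over $j$ at the index where the two bounds cross produces two geometric series whose total is $CA^{1-\theta}B^{\theta}$; the strict inequality $s_0<s_1$ is exactly what provides the exponential decay on each side. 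With this lemma in place of your $\ell^q$-H\"older step, the $\ell^\infty$ norms on the right are dominated by the $\ell^{q_0}$, $\ell^{q_1}$ norms via Proposition \ref{prop:elementaryembeddingBTL}(1), and the remainder of your argument (the weighted spatial H\"older inequality) goes through verbatim and coincides with the paper's proof.
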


\begin{proof} Due to \cite[Lemma 3.7]{BM01}, for any sequence of scalars $(a_j)_{j\geq 0}$ one has
\[\|(2^{sj} a_j)_{j\geq 0}\|_{\ell^q}\leq \|(2^{s_0j} a_j)_{j\geq 0}\|_{\ell^\infty}^{1-\theta} \|(2^{s_1j} a_j)_{j\geq 0}\|_{\ell^\infty}^{\theta}.\]
 Taking $a_j(x) = \|S_j f(x)\|$ with $x\in \R^d$, one obtains
\[\|(2^{sj} a_j(x))_{j\geq 0}\|_{\ell^q}^p w(x)\leq  \|(2^{s_0j} a_j(x))_{j\geq 0}\|_{\ell^\infty}^{(1-\theta)p} w_0(x)^{(1-\theta)p/p_0} \, \|(2^{s_1j} a_j(x))_{j\geq 0}\|_{\ell^\infty}^{\theta p} w_1(x)^{\theta p/p_1}.\]
Thus H\"older's inequality gives
\[\|f\|_{F^{s}_{p,q}(\R^d,w;X)} \leq C \|f\|_{F^{s_0}_{p_0,\infty}(\R^d,w_0;X)}^{1-\theta} \|f\|_{F^{s_1}_{p_1,\infty}(\R^d,w_1;X)}^\theta,\]
and  the assertion follows from Proposition \ref{prop:elementaryembeddingBTL}.
\end{proof}

We turn to the proof of sufficiency.

\begin{proof}[Proof of Theorem \ref{thm:main2} (2) $\Rightarrow$ (1)]
By the elementary embeddings of Proposition \ref{prop:elementaryembeddingBTL},
one can assume that $s_0 -\frac{d+\gamma_0}{p_0}= s_1 -
\frac{d+\gamma_1}{p_1}$ and $q_1 = 1$. The trivial cases in (2) are also covered Proposition \ref{prop:elementaryembeddingBTL}. We thus have to show that \eqref{cond:exp} implies embedding (1).

Let $\theta_0\in [0,1)$ be such that
$\frac{1}{p_1} - \frac{1-\theta_0}{p_0} =0$. Consider the function
$g:(\theta_0, 1]\to \R$ given by
\[g(\theta) = \frac{\gamma_1/p_1 -(1-\theta)\gamma_0/p_0}{\frac{1}{p_1} - \frac{1-\theta}{p_0}}.\]
Obviously, $g$ is continuous, and $\lim_{\theta\uparrow 1} g(\theta) = \gamma_1$. Since $\gamma_1>-d$ we can choose a $\theta\in (\theta_0,1)$ such that $\gamma:=g(\theta)>-d$. Let $v(x) = |x|^\gamma$, and let $r$ be defined by
$\frac{1}{p_1} = \frac{1-\theta}{p_0} + \frac{\theta}{r}.$
Note that $p_0\leq p_1 < \infty$ implies $r\in [p_1,\infty)$. Let further $t$ be defined by
$t - \frac{d+\gamma}{r} = s_1  - \frac{d+\gamma_1}{p_1}.$
Observe that $t < s_0$, $s_1 = \theta t + (1-\theta) s_0$ and $v^{p_1\theta/r} w_0^{(1-\theta)p_1/p_0} = w_1.$
Therefore, by Proposition \ref{prop:interpolationineq},
\begin{equation}\label{eq:intFhelp}
\|f\|_{F^{s_1}_{p_1,1}(\R^d,w_1;X)}\leq C \|f\|_{F^{s_0}_{p_0,q_0}(\R^d,w_0;X)}^{1-\theta} \|f\|_{F^{t}_{r,r}(\R^d,v;X)}^\theta.
\end{equation}
Now one can check that
$$\frac{\gamma_1}{p_1} - \frac{\gamma}{r} = \frac{1-\theta}{\theta} \Big( \frac{\gamma_0}{p_0} - \frac{\gamma_1}{p_1} \Big)\geq 0.$$
From Proposition \ref{prop:elementaryembeddingBTL} and Theorem \ref{thm:main1} (using $r\geq p_1$) one obtains that
\begin{align*}
 \|f\|_{F^{t}_{r,r}(\R^d,v;X)} &\, = \|f\|_{B^{t}_{r,r}(\R^d,v;X)} \leq C \|f\|_{B^{s_1}_{p_1,p_1}(\R^d,w_1;X)} \leq C \|f\|_{F^{s_1}_{p_1,1}(\R^d,w_1;X)}.
\end{align*}
Substituting the latter estimate in \eqref{eq:intFhelp}, one deduces that
\[\|f\|_{F^{s_1}_{p_1,1}(\R^d,w_1;X)} \leq C\|f\|_{F^{s_0}_{p_0,q_0}(\R^d,w_0;X)}.\]
\end{proof}

\subsection{Necessary conditions} The necessary conditions for the $F$-spaces are a direct consequence of the result for the $B$-spaces.

\begin{proof}[Proof of Theorem \ref{thm:main2} (1) $\Rightarrow$ (2)]
Assume (1). It suffices to consider $X=\C$. By Proposition \ref{prop:elementaryembeddingBTL} one has
\[B^{s_0}_{p_0,1}(\R^d,w_0)\hookrightarrow F^{s_0}_{p_0,q_0}(\R^d,w_0) \hookrightarrow F^{s_1}_{p_1,q_1}(\R^d,w_1)\hookrightarrow B^{s_1}_{p_1,\infty}(\R^d,w_1).\]
Therefore, (2) follows from the Propositions \ref{prop:necc} and \ref{prop:elementaryembeddingBTL}.
\end{proof}

Now we can prove the characterization for the $H$- and $W$-spaces in case $p_0\leq p_1$.

\begin{proof}[Proof of Corollaries \ref{cor:introHspaces} and \ref{cor:introWspaces}]
This follows from Theorem \ref{thm:main2} and Proposition \ref{prop:squeezeHF}.
\end{proof}

\begin{remark}
It is unclear to us whether Corollaries \ref{cor:introHspaces} and \ref{cor:introWspaces} hold for all $\gamma_0, \gamma_1>-d$. This is contained in \cite{Rab} for $s_0 = 0$, $s_1=0$ in the case of $W$-spaces.
\end{remark}

In Proposition \ref{prop:p1kleinerp0} we give a characterization for the embedding of $H$ and $F$-spaces in case $p_1<p_0$. Its proof will be based on the following result.

\begin{proposition}\label{prop:neccnonopt}
Let $1<p_1<p_0 <\infty$ and $s_0,s_1\in \R$. Let further $w_0(x) = |x|^{\gamma_0}$ and $w_1(x) = |x|^{\gamma_1}$ with $-d< \gamma_0 < d(p_0-1)$ and $-d< \gamma_1 < d(p_1-1)$. Suppose
\begin{equation}\label{nec-embedd3}
H^{s_0,p_0}(\R^d,w_0) \hookrightarrow H^{s_1,p_1}(\R^d,w_1).
\end{equation}
Then
\begin{equation}\label{eq:p1kleinerp0}
\frac{\gamma_1+d}{p_1}< \frac{\gamma_0+d}{p_0} \qquad \text{and} \qquad s_0 - \frac{d+\gamma_0}{p_0} > s_1 - \frac{d+\gamma_1}{p_1}.
\end{equation}
\end{proposition}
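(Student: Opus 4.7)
The plan is to prove this in two stages: first reduce the problem to Besov necessary conditions already established in the paper, then exclude the borderline case by an explicit test-function argument.

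For the first stage, I would chain Proposition \ref{prop:squeezeHF} (applicable since $\gamma_i\in(-d,d(p_i-1))$ puts $w_i\in A_{p_i}$) with the elementary Besov-Triebel-Lizorkin embeddings of Proposition \ref{prop:elementaryembeddingBTL} to extract
\[B^{s_0}_{p_0,1}(\R^d,w_0) \hookrightarrow F^{s_0}_{p_0,1}(\R^d,w_0) \hookrightarrow H^{s_0,p_0}(\R^d,w_0)\hookrightarrow H^{s_1,p_1}(\R^d,w_1)\hookrightarrow F^{s_1}_{p_1,\infty}(\R^d,w_1) \hookrightarrow B^{s_1}_{p_1,\infty}(\R^d,w_1).\]
Applying Proposition \ref{prop:neccnonopt2} to the resulting Besov embedding immediately hands over both the strict weight inequality $(d+\gamma_1)/p_1 < (d+\gamma_0)/p_0$ and the non-strict differential inequality $s_0 - (d+\gamma_0)/p_0 \geq s_1 - (d+\gamma_1)/p_1$.

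For the second stage I argue by contradiction: suppose $s_0 - (d+\gamma_0)/p_0 = s_1 - (d+\gamma_1)/p_1$, and set $\sigma = s_0 - s_1$. The strict weight inequality just obtained forces $\sigma>0$, and the $A_p$-restrictions $\gamma_0<d(p_0-1)$ and $\gamma_1>-d$ together give $\sigma < d$. The lifting isomorphism \eqref{eq:JsigmaH} recasts the assumed $H$-embedding as the boundedness of $J_{-\sigma}: L^{p_0}(\R^d, w_0)\to L^{p_1}(\R^d, w_1)$. Since $p_1<p_0$ the interval $(1/p_0, 1/p_1]$ is nonempty; I pick $\beta$ in it, fix a smooth radial cutoff $\chi\in C_c^\infty(\R^d)$ equal to one on $\{|x|\leq 1/4\}$ and supported in $\{|x|\leq 1/2\}$, and consider
\[g(x) := |x|^{-(d+\gamma_0)/p_0}\,(\log(1/|x|))^{-\beta}\,\chi(x).\]
A polar-coordinate computation shows $g\in L^{p_0}(\R^d,w_0)$ precisely because $\beta p_0>1$. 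On the other hand, using the Bessel kernel asymptotic $G_\sigma(x)\sim c_\sigma|x|^{\sigma-d}$ as $|x|\to 0$ (valid since $0<\sigma<d$) together with its exponential decay at infinity, the critical identity $\sigma = (d+\gamma_0)/p_0 - (d+\gamma_1)/p_1$ is expected to produce the lower bound
\[J_{-\sigma}g(x) \gtrsim |x|^{-(d+\gamma_1)/p_1}\,(\log(1/|x|))^{-\beta}\qquad\text{as}\quad|x|\to 0.\]
This forces $\|J_{-\sigma}g\|_{L^{p_1}(w_1)}^{p_1} \gtrsim \int_0^{1/4} r^{-1}(\log(1/r))^{-\beta p_1}\,dr = +\infty$, because $\beta p_1\leq 1$, contradicting the boundedness of $J_{-\sigma}$.

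The hardest step is the pointwise lower bound on $J_{-\sigma}g$ near the origin: the logarithmic factor must be tracked through the Riesz-type convolution without loss. I would handle this by splitting the Bessel kernel into its singular part $c_\sigma|x|^{\sigma-d}\eta(x)$ (with $\eta$ a cutoff near zero) plus a bounded, exponentially decaying remainder, partitioning the $y$-integration into the three zones $|y|\leq|x|/2$, $|y|\in[|x|/2,2|x|]$, $|y|\geq 2|x|$, and exploiting that on each zone $(\log(1/|y|))^{-\beta}$ is comparable to $(\log(1/|x|))^{-\beta}$ up to a uniform constant while the resulting Riesz-potential integrals follow from the classical formula $I_\sigma(|\cdot|^{-\alpha})(x) = c_{\sigma,\alpha,d}|x|^{\sigma-\alpha}$ valid for $0<\sigma<\alpha<d$.
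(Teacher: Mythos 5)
Your proposal is correct, and its first stage (chaining Propositions \ref{prop:squeezeHF} and \ref{prop:elementaryembeddingBTL} to reduce to a $B^{s_0}_{p_0,1}\hookrightarrow B^{s_1}_{p_1,\infty}$ embedding and invoking Proposition \ref{prop:neccnonopt2}) is exactly what the paper does. The overall strategy of the second stage is also the same --- assume the borderline equality, reduce by lifting to an $L^{p_0}(w_0)\to L^{p_1}(w_1)$ bound for a potential operator of order $\sigma=s_0-s_1$, and defeat it with the test function $|x|^{-(d+\gamma_0)/p_0}(\log(1/|x|))^{-\beta}$ near the origin, which lies in $L^{p_0}(w_0)$ for $\beta p_0>1$ while its potential fails to lie in $L^{p_1}(w_1)$ for $\beta p_1\le 1$ --- but your implementation differs in a worthwhile way. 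The paper first converts the Bessel potential into the homogeneous Riesz potential $(-\Delta)^{-s_0/2}$, which costs it two extra ingredients: the boundedness of $(1-\Delta)^{s_0/2}(1+(-\Delta)^{s_0/2})^{-1}$ on $L^{p_0}(\R^d,w_0)$ (from Stein), a scaling/limiting argument plus a density argument (Lemma \ref{lem:densityFouriercompact}-type extension) to isolate the homogeneous inequality, and then the explicit radial representation of the Riesz potential from \cite{NDD}, with a separate treatment of $d=1$. You instead keep the inhomogeneous operator $J_{-\sigma}$ and use only the positivity and small-argument asymptotics $G_\sigma(x)\sim c_\sigma|x|^{\sigma-d}$ of the Bessel kernel, valid since you correctly verify $0<\sigma<d$ from the strict weight inequality and the $A_p$ restrictions; the lower bound then follows by restricting the convolution to an annulus $|y|\sim|x|$ (your three-zone splitting is more than is needed for a lower bound, since kernel and test function are nonnegative, but it is not wrong), and no density or scaling step is required because $g$ itself lies in $L^{p_0}(\R^d,w_0)$ and the embedding applies directly to $f=J_{-s_0}g$. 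This is a cleaner and dimension-uniform route to the same contradiction; the only thing to make sure of in a full write-up is a careful statement of the standard Bessel kernel asymptotics, which replaces the paper's reliance on \cite{NDD}.
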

\begin{proof}
It suffices to consider the case $X=\C$ and $s_1 = 0$. The Propositions \ref{prop:squeezeHF}, \ref{prop:necc} and \ref{prop:neccnonopt2}  imply that $s_0 - \frac{d+\gamma_0}{p_0} \geq - \frac{d+\gamma_1}{p_1}$ and $\frac{\gamma_1+d}{p_1} < \frac{\gamma_0+d}{p_0}.$
In particular, we have $s_0> 0$.

We suppose that
\[s_0 - \frac{d+\gamma_0}{p_0} =  -\frac{d+\gamma_1}{p_1}, \qquad \frac{\gamma_1+d}{p_1}< \frac{\gamma_0+d}{p_0},\]
and show that this gives a contradiction. It follows from \cite[Proposition VI.4.4/2, Corollary V.4.2]{Stein93} that the operator $(1-\Delta)^{s_0/2} (1+ (-\Delta)^{s_0/2})^{-1}$ is bounded on $L^{p_0}(\R^d,w_0)$.
Thus for $f\in \Schw(\R^d)$, the embedding \eqref{nec-embedd3} implies that
\begin{align*}
 \|f\|_{L^{p_1}(\R^d,w_1)} &\, \leq C \|(1-\Delta)^{s_0/2} f\|_{L^{p_0}(\R^d,w_0)}   \leq C\|f\|_{L^{p_0}(\R^d,w_0)} + \| (-\Delta)^{s_0/2} f\|_{L^{p_0}(\R^d,w_0)}.
\end{align*}
By applying the above estimate to $f(\lambda\cdot)$ for $\lambda>0$, scaling shows that
\[\|f\|_{L^{p_1}(\R^d,w_1)} \leq C \lambda^{-s_0} \|f\|_{L^{p_0}(\R^d,w_0)} + C \|(-\Delta)^{s_0/2} f\|_{L^{p_0}(\R^d,w_0)}.\]
Letting $\lambda\to \infty$ gives
\begin{equation}\label{eq:Rieszineq}
\|f\|_{L^{p_1}(\R^d,w_1)} \leq \tilde{C} \|(-\Delta)^{s_0/2} f\|_{L^{p_0}(\R^d,w_0)}.
\end{equation}
By density it follows that for all $f\in \TD(\R^d)$ for which $(-\Delta)^{s_0/2} f\in L^{p_0}(\R^d,w_0)$, the estimate \eqref{eq:Rieszineq} holds.

Now define the radial function $g:\R^d\to \R$ as $g(x) = |x|^{-a}
\log(1/|x|)^{-b}\one_{[0,1/2]}(|x|)$, where
\[a=s_0 + \frac{\gamma_1+d}{p_1} = \frac{\gamma_0+d}{p_0}, \qquad  b = 1/p_1.\]
One has that $g\in L^{p_0}(\R^d,w_0)$. Indeed, using polar coordinates one
sees that
\[\|g\|_{L^{p_0}(\R^d,w_0)}^{p_0} = c \int_{0}^{1/2} r^{d-1} r^{-a p_0} \log(1/r)^{-b
p_0}\, r^{\gamma_0} \, dr = c \int_{0}^{1/2} r^{-1} \log(1/r)^{- p_0/p_1}\, \,
dr<\infty,
\]
because $p_0/p_1>1$. Let $f\in \TD(\R^d)$ be defined by $f =
(-\Delta)^{-s_0/2} g$. To show that \eqref{eq:Rieszineq} cannot hold, and thus to prove that \eqref{eq:p1kleinerp0}, it
suffices to show that $f \notin L^{p_1}(\R^d,w_1)$. This will be checked for $d\geq 2$ and $d=1$ separately. First assume $d\geq 2$. One has the following representation of the Riesz potential for radial symmetric functions $v:\R^d\to \R$ (see \cite{NDD}):
\[(-\Delta)^{-s_0/2} v(x) = c \int_0^\infty v(r) r^{s_0-1} I_{j, k} (\rho/r) \, dr.\]
Here $j = d-s_0$,  $k = (d-3)/2$ and $\rho = |x|$ and
\[I_{j, k}(z) = \int_{-1}^1 \frac{(1-t^2)^k}{(1-2zt + z^2)^{j/2}} \, dt, \qquad  z\geq 0.\]
The function $I_{j,k}:[0,\infty)\to [0,\infty)$ is continuous on $[0,\infty)\setminus\{1\}$ and its singularity at $1$ is well-understood (see \cite[Lemma 4.2]{NDD}). For $\rho = |x|\leq \frac12$, $\rho \neq 0$, we obtain
\begin{align*}
f(x) = (-\Delta)^{-s_0/2} g(x) & = c \int_0^{1/2} r^{s_0-a-1} \log(1/r)^{-b} I_{j, k} (\rho/r) \, dr
\\ &  \geq c \log(2/\rho)^{-b}  \int_{\rho/2}^{\rho} r^{s_0-a-1} I_{j, k} (\rho/r) \, dr
\\ & = c \rho^{s_0-a}   \log(2/\rho)^{-b}  \int_{1/2}^{1} u^{s_0-a-1} I_{j, k} (1/u) \, dr
\geq C \rho^{s_0-a} \log(2/\rho)^{-b}.
\end{align*}
It follows that
\begin{align*}
\|f\|_{L^{p_1}(\R^d,w_1)}^{p_1} & \geq \int_{|x|\leq \frac12} |f(x)|^{p_1} |x|^{\gamma_1} \, dx
\\ & \geq C \int_{0}^{1/2} \rho^{d-1} \rho^{(s_0-a)p_1} \log(2/\rho)^{-b p_1} \rho^{\gamma_1} \, d\rho
= C \int_{0}^{1/2} \rho^{-1} \log(2/\rho)^{-1} \, d\rho = \infty.
\end{align*}
Hence $f \notin L^{p_1}(\R^d,w_1)$. If $d=1$, then for $x\in [0,1/2]$ one has
\begin{align*}
f(x) = (-\Delta)^{-s_0/2} g(x) & = c \int_{-1/2}^{1/2} |x-y|^{s_0-1} |y|^{-a} \log(1/|y|)^{-b} \, dy
\\ & \geq  \log(2/x)^{-b}  \int_{x/2}^{x} |x-y|^{s_0-1} |y|^{-a} \, dy
 = c x^{s_0-a} \log(2/x)^{-b}.
\end{align*}
Now the proof can be finished as before.
\end{proof}

We obtain the following consequences for the $F$- and the $W$-spaces.

\begin{corollary}\label{cor:Fcasecounter}
Let $1<p_1<p_0 <\infty$ and $s_0,s_1\in \R$. Let further $w_0(x) = |x|^{\gamma_0}$ and $w_1(x) = |x|^{\gamma_1}$ with $-d< \gamma_0 < d(p_0-1)$ and $-d < \gamma_1< d(p_1-1)$. Suppose that for some $q_0\in [2, \infty]$ and $q_1\in [1, 2]$ one has
\begin{equation}\label{nec-embedd3F}
F^{s_0}_{p_0,q_0}(\R^d,w_0) \hookrightarrow F^{s_1}_{p_1,q_1}(\R^d,w_1).
\end{equation}
Then \eqref{eq:p1kleinerp0} holds.
\end{corollary}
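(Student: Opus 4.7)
The plan is to reduce Corollary \ref{cor:Fcasecounter} to the already established necessity result for $H$-spaces in Proposition \ref{prop:neccnonopt}. The idea is to sandwich the given $F$-embedding between two $H$-embeddings by combining monotonicity in the microscopic parameter (which requires the sign restrictions $q_0\geq 2$, $q_1\leq 2$) with the weighted Littlewood--Paley identification of $F^s_{p,2}$ and $H^{s,p}$ in the scalar case.

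First I would apply Proposition \ref{prop:elementaryembeddingBTL}(1): since $q_0\geq 2$ and $q_1\leq 2$, one has
\[
F^{s_0}_{p_0,2}(\R^d,w_0)\hookrightarrow F^{s_0}_{p_0,q_0}(\R^d,w_0),\qquad F^{s_1}_{p_1,q_1}(\R^d,w_1)\hookrightarrow F^{s_1}_{p_1,2}(\R^d,w_1).
\]
Next I would invoke the classical weighted Littlewood--Paley identity
\[
F^{s}_{p,2}(\R^d,w)=H^{s,p}(\R^d,w)
\]
for scalar-valued functions and $w\in A_p$ (the weighted analogue of \eqref{H=F}). The hypotheses $\gamma_0<d(p_0-1)$ and $\gamma_1<d(p_1-1)$ exactly guarantee $w_0\in A_{p_0}$ and $w_1\in A_{p_1}$, so the identification applies to both endpoints. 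Chaining these with the hypothesis \eqref{nec-embedd3F} produces
\[
H^{s_0,p_0}(\R^d,w_0)=F^{s_0}_{p_0,2}(\R^d,w_0)\hookrightarrow F^{s_0}_{p_0,q_0}(\R^d,w_0)\hookrightarrow F^{s_1}_{p_1,q_1}(\R^d,w_1)\hookrightarrow F^{s_1}_{p_1,2}(\R^d,w_1)=H^{s_1,p_1}(\R^d,w_1).
\]
Proposition \ref{prop:neccnonopt} applied to the resulting $H^{s_0,p_0}\hookrightarrow H^{s_1,p_1}$ embedding then delivers \eqref{eq:p1kleinerp0}, completing the proof.

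The only non-routine ingredient is the scalar identity $F^s_{p,2}(\R^d,w)=H^{s,p}(\R^d,w)$ for $w\in A_p$, which is not explicitly recorded among the stated propositions. In practice it follows either from a standard citation to weighted Littlewood--Paley theory, or directly from Proposition \ref{prop:weightedmultiplier} applied to the symbols $(1+|\xi|^2)^{\pm s/2}$ together with the weighted Fefferman--Stein inequality (Proposition \ref{prop:FSvectorvalued}); both pieces have already been set up in Section \ref{prelim}. This identification is really the source of the restrictions $q_0\geq 2$ and $q_1\leq 2$ in the statement, and without it (i.e.\ for the other range of microscopic parameters) one cannot transfer the $H$-space obstruction to the $F$-setting in this simple way.
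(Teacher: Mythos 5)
Your proposal is correct and follows essentially the same route as the paper: the paper also chains the elementary $q$-monotonicity (Proposition \ref{prop:elementaryembeddingBTL}) with the scalar identification $F^{s_i}_{p_i,2}(\R^d,w_i)=H^{s_i,p_i}(\R^d,w_i)$ for $w_i\in A_{p_i}$ (obtained via Proposition \ref{prop:lifting} and the weighted Littlewood--Paley results of \cite[Section 4]{Bui82}), and then invokes Proposition \ref{prop:neccnonopt}. Your only caveat is well placed: the identity is the genuinely nontrivial ingredient, and the paper handles it by citation rather than by the multiplier theorem alone.
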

\begin{proof}
By Proposition \ref{prop:lifting} and the results in \cite[Section 4]{Bui82} we have
\begin{equation}\label{eq:FH}
F^{s_i}_{p_i,2}(w_i) = H^{s_i,p_i}(w_i) \qquad \text{for $i=1, 2$.}
\end{equation} Therefore, by Proposition \ref{prop:elementaryembeddingBTL}, \eqref{nec-embedd3F} implies \eqref{nec-embedd3}. Now the result follows from Proposition \ref{prop:neccnonopt}.
\end{proof}

\begin{corollary}\label{cor:Wcasecounter}
Let $1<p_1<p_0 <\infty$ and $s_0,s_1\in \N_0$. Let further $w_0(x) = |x|^{\gamma_0}$ and $w_1(x) = |x|^{\gamma_1}$ with $-d< \gamma_0 < d(p_0-1)$ and $-d < \gamma_1< d(p_1-1)$. Suppose
\begin{equation}\label{nec-embedd3W}
W^{s_0,p_0}(\R^d,w_0) \hookrightarrow W^{s_1,p_1}(\R^d,w_1).
\end{equation}
Then \eqref{eq:p1kleinerp0} holds.
\end{corollary}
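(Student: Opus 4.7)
The plan is to reduce this to Proposition \ref{prop:neccnonopt} by identifying the Sobolev spaces with Bessel potential spaces in the scalar setting. Since the necessary conditions only need to be checked for $X=\C$ (an embedding for $X$-valued spaces forces one on scalar spaces by testing on functions of the form $x\mapsto \phi(x)e$ for fixed $e\in X$, or simply by restricting to scalar-valued functions), we may restrict to the scalar case.

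First I would establish the identity
\[
W^{m,p}(\R^d,w) = H^{m,p}(\R^d,w),\qquad m\in\N_0,\ p\in(1,\infty),\ w\in A_p,
\]
with equivalence of norms. The case $m=0$ is trivial. For $m\geq 1$, one combines the lifting property \eqref{eq:JsigmaH} with the fact that for $w\in A_p$ the Riesz transforms $R_j = \partial_j(-\Delta)^{-1/2}$ are bounded on $L^p(\R^d,w)$; this is the standard weighted Calder\'on-Zygmund theory (see e.g.\ \cite[Chapter V]{Stein93}). Indeed, boundedness of $R_j$ on $L^p(w)$ combined with the identities $(1-\Delta)^{1/2}=J_1$ and $\partial_j = R_j(-\Delta)^{1/2}$, together with the fact that $(-\Delta)^{1/2}(1-\Delta)^{-1/2}$ and its inverse (on suitable classes) are bounded Fourier multipliers on $L^p(w)$ by the weighted Mihlin theorem (a consequence of Proposition \ref{prop:weightedmultiplier}), gives the equivalence of $\|\cdot\|_{W^{m,p}(\R^d,w)}$ and $\|\cdot\|_{H^{m,p}(\R^d,w)}$ for all $m\in\N_0$.

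Second, since $\gamma_i\in(-d,d(p_i-1))$ means $w_i\in A_{p_i}$, the assumed embedding
\[
W^{s_0,p_0}(\R^d,w_0)\hookrightarrow W^{s_1,p_1}(\R^d,w_1)
\]
transfers under the above identity to
\[
H^{s_0,p_0}(\R^d,w_0)\hookrightarrow H^{s_1,p_1}(\R^d,w_1).
\]
Proposition \ref{prop:neccnonopt} then yields precisely the two conditions \eqref{eq:p1kleinerp0}.

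There is no real obstacle here; the only point to double-check is the $W=H$ identification, which is well-known in the unweighted case and extends verbatim to $A_p$-weights via the weighted multiplier theorem of Proposition \ref{prop:weightedmultiplier} and weighted Riesz transform bounds. Everything else is an immediate citation of the previously proved result.
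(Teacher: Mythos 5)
Your proposal is correct in substance and reaches the conclusion by the same overall strategy as the paper: identify the weighted Sobolev spaces with Bessel-potential spaces in the scalar case and then quote Proposition \ref{prop:neccnonopt}. The difference lies in how the identification $W^{m,p}(\R^d,w)=H^{m,p}(\R^d,w)$ for $w\in A_p$ is justified. The paper stays inside the Littlewood--Paley machinery it has already built: it invokes Rychkov's identity $L^{p}(\R^d,w)=F^{0}_{p,2}(\R^d,w)$ together with Proposition \ref{prop:differentiation} to get $W^{s_i,p_i}(\R^d,w_i)=F^{s_i}_{p_i,2}(\R^d,w_i)$, and then passes through Corollary \ref{cor:Fcasecounter} (which uses Bui's $F^{s}_{p,2}(w)=H^{s,p}(w)$). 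You instead argue directly via weighted Riesz transform bounds and a weighted Mihlin multiplier theorem. Both routes are standard and valid; the paper's has the advantage of reusing results already established in Section 3 and of factoring through the $F$-scale statement (Corollary \ref{cor:Fcasecounter}), while yours is more self-contained from the point of view of classical weighted Calder\'on--Zygmund theory. Your remark about reducing to $X=\C$ is harmless but unnecessary, since the corollary is already stated for scalar spaces.

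One small inaccuracy to fix: the weighted Mihlin--H\"ormander theorem on $L^p(\R^d,w)$ for general (non-band-limited) functions is \emph{not} a consequence of Proposition \ref{prop:weightedmultiplier}, which only applies to functions whose Fourier transforms are supported in prescribed compact sets. For symbols such as $\xi^\alpha(1+|\xi|^2)^{-m/2}$ acting on all of $L^p(\R^d,w)$ you need the Kurtz--Wheeden weighted multiplier theorem or the weighted singular integral theory of \cite[Chapter V]{Stein93}, which you do also cite; so the mathematics goes through, but the attribution to Proposition \ref{prop:weightedmultiplier} should be dropped. Likewise, the phrase about the inverse of $(-\Delta)^{1/2}(1-\Delta)^{-1/2}$ being bounded ``on suitable classes'' glosses over the singularity of the symbol $(1+|\xi|^2)^{1/2}|\xi|^{-1}$ at the origin; the cleaner standard argument writes $(1+|\xi|^2)^{1/2}=(1+|\xi|^2)^{-1/2}+\sum_j \xi_j\,\xi_j(1+|\xi|^2)^{-1/2}$ so that $J_1 f$ is controlled by $f$ and its first derivatives through genuine Mihlin multipliers, and then induction gives the case of general $m$.
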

\begin{proof} Since $L^{p_i}(\R^d,w_i) = F^{0}_{p_i,2}(\R^d,w_i)$ by \cite[Theorem 1.10]{Ry01},  Proposition \ref{prop:differentiation} implies that $W^{s_i,p_i}(\R^d,w_i) = F^{s_i}_{p_i,2}(\R^d,w_i)$ for $i=1,2$.  Now the result follows from Corollary \ref{cor:Fcasecounter}. \end{proof}

We end this section with the characterization of embeddings for $H$-spaces in case $p_0 > p_1$.

\begin{proof}[Proof of Proposition \ref{prop:p1kleinerp0}]
To prove (1) $\Rightarrow$ (3) and (2) $\Rightarrow$ (3), it suffices to consider $X=\C$. Note that because of \eqref{eq:FH} it suffices to prove (1) $\Rightarrow$ (3), since (1) and (2) coincide for $X=\C$. Now (3) follows from Proposition \ref{prop:neccnonopt}.

We prove the sufficiency part. Assume (3). By Theorem \ref{thm:main1} it follows that $B^{s_0}_{p_0, \infty}(\R^d,w_0; X)\hookrightarrow B^{t_1}_{p_1, \infty}(\R^d,w_1; X)$, where $t_1$ is defined by $s_0 - \frac{\gamma_0+d}{p_0}  = t_1 - \frac{\gamma_1+d}{p_1}$. Note that it follows from (3) that $t_1>s_1$. Therefore, combining the above embedding with Proposition \ref{prop:elementaryembeddingBTL} yields
\[B^{s_0}_{p_0, \infty}(\R^d,w_0; X)\hookrightarrow B^{s_1}_{p_1, 1}(\R^d,w_1; X).\]
Now (1) and (2) are a consequence of the Propositions \ref{prop:elementaryembeddingBTL} and \ref{prop:squeezeHF}.
\end{proof}

\section{Embeddings of Jawerth--Franke type}
In this section we only treat power weights of $A_p$-type. We first consider  real interpolation of the weighted function spaces.

\begin{proposition}\label{thm:realinter}
Let $X$ be a Banach space, $p\in (1, \infty)$, $q_0, q_1,q\in[1, \infty]$, $s_0\neq s_1\in \R$. Then for $\theta\in [0,1]$ and  $s = (1-\theta) s_0 + \theta s_1$ and $w\in A_p$ one has
\begin{align}
\label{eq:Besovinterpolation} (B^{s_0}_{p,q_0}(\R^d,w;X), B^{s_1}_{p,q_1}(\R^d,w;X))_{\theta,q} &= B^{s}_{p,q}(\R^d,w;X),
\\\label{eq:TLinterpolation}  (F^{s_0}_{p,q_0}(\R^d,w;X), F^{s_1}_{p,q_1}(\R^d,w;X))_{\theta,q} &= B^{s}_{p,q}(\R^d,w;X),
\\ \label{eq:Besselinterpolation}(H^{s_0,p}(\R^d,w;X), H^{s_1,p}(\R^d,w;X))_{\theta,q} &= B^{s}_{p,q}(\R^d,w;X) ,
\end{align}
and if additionally $s_0,s_1\geq 0$ are integers, then
\begin{align}
\label{eq:Sobolevinterpolation} (W^{s_0,p}(\R^d,w;X), W^{s_1,p}(\R^d,w;X))_{\theta,q} = B^{s}_{p,q}(\R^d,w;X).
\end{align}
Moreover, for $p_0, p_1\in (1, \infty)$, $s\in \R$, $\theta\in (0,1)$, $\frac1p = \frac{1-\theta}{p_0} + \frac{\theta}{p_1}$, $q\in (1,\infty]$ and $w_0 \in A_{p_0}$, $w_1\in A_{p_1}$, $w\in A_p$ with $w^{1/p} = w_0^{(1-\theta)/p_0} w_1^{\theta/p_1}$ it holds that
\begin{align}\label{eq:TrLinterp}
(F^{s}_{p_0,q}(\R^d,w_0;X), F^{s}_{p_1,q}(\R^d,w_1;X))_{\theta,p} &= F^{s}_{p,q}(\R^d,w;X),\\
(F^{s}_{p_0,1}(\R^d,w_0;X), F^{s}_{p_1,1}(\R^d,w_1;X))_{\theta,p} &\hookrightarrow F^{s}_{p,1}(\R^d,w;X).\label{eq:TrLinterp1}
\end{align}
\end{proposition}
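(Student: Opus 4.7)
The plan is to prove every identity by the retract–coretract method, following the template of \cite[Section 2.4]{Tri83} and \cite[Section 15.6]{Tr97} but using Proposition \ref{prop:weightedmultiplier} to accommodate the weights and the general Banach target. Fix $(\varphi_k)\in\Phi$ and pick a thickened system $(\widetilde{\varphi}_k)\in\Phi$ with $\widetilde{\varphi}_k\varphi_k=\varphi_k$. Define the coretraction $Jf=(S_k f)_{k\ge 0}$ and the retraction $R(g_k)=\sum_{k\ge 0}\widetilde{S}_k g_k$. Proposition \ref{prop:weightedmultiplier} gives boundedness of $R$ from $\ell^q_s(L^p(\R^d,w;X))$ onto $B^s_{p,q}(\R^d,w;X)$ and from $L^p(\R^d,w;\ell^q_s(X))$ onto $F^s_{p,q}(\R^d,w;X)$; boundedness of $J$ is immediate. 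Since $R\circ J=\mathrm{id}$, interpolation commutes with this retract, so one may reduce the computation of $(B^{s_0}_{p,q_0},B^{s_1}_{p,q_1})_{\theta,q}$ or its $F$-analogue to interpolating the corresponding sequence-valued $L^p$-spaces.

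\textbf{Besov and Triebel--Lizorkin identities.} Identity \eqref{eq:Besovinterpolation} then reduces to the classical
\[
\bigl(\ell^{q_0}_{s_0}(Y),\ell^{q_1}_{s_1}(Y)\bigr)_{\theta,q}=\ell^q_s(Y),\qquad Y=L^p(\R^d,w;X).
\]
For \eqref{eq:TLinterpolation} I would invoke the more delicate Triebel-type lemma (available because $s_0\neq s_1$)
\[
\bigl(L^p(\R^d,w;\ell^{q_0}_{s_0}(X)),L^p(\R^d,w;\ell^{q_1}_{s_1}(X))\bigr)_{\theta,q}=\ell^q_s(L^p(\R^d,w;X)),
\]
in which the separation of scales forces the inner $\ell^q$ and outer $L^p$ to swap under real interpolation, producing a Besov space on the right. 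The classical proof rests on manipulations of the $K$-functional and is insensitive to the underlying measure, so it transfers verbatim to $L^p(\R^d,w;X)$.

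\textbf{Bessel-potential and Sobolev cases.} For \eqref{eq:Besselinterpolation} and \eqref{eq:Sobolevinterpolation} I would use the squeeze from Proposition \ref{prop:squeezeHF},
\[
F^{s_i}_{p,1}(\R^d,w;X)\hookrightarrow H^{s_i,p}(\R^d,w;X)\hookrightarrow F^{s_i}_{p,\infty}(\R^d,w;X),
\]
together with the analogous squeeze for $W^{m,p}$, and the monotonicity of real interpolation. Since \eqref{eq:TLinterpolation} delivers the same Besov space $B^s_{p,q}(\R^d,w;X)$ on both ends of each squeeze, the middle is sandwiched and must coincide with $B^s_{p,q}(\R^d,w;X)$ as well.

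\textbf{Mixed-exponent identities and main obstacle.} For \eqref{eq:TrLinterp} and \eqref{eq:TrLinterp1} I would combine the $F$-retract with the weighted Stein--Weiss interpolation theorem
\[
\bigl(L^{p_0}(w_0;Z),L^{p_1}(w_1;Z)\bigr)_{\theta,p}=L^p(w;Z),
\]
valid for any Banach space $Z$ under the compatibility $w^{1/p}=w_0^{(1-\theta)/p_0}w_1^{\theta/p_1}$. Taking $Z=\ell^q_s(X)$ and transferring via the retract yields \eqref{eq:TrLinterp} whenever $q>1$, because only then is $R$ bounded on $L^{p_i}(w_i;\ell^q_s(X))$ via the Fefferman--Stein inequality of Proposition \ref{prop:FSvectorvalued}. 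For $q=1$ the coretract $J$ is still bounded while $R$ is not, which gives only the embedding \eqref{eq:TrLinterp1}. The main technical hurdle is checking that the interpolation weight $w$ is admissible for the retract, i.e., that some $r\in(0,\min\{p,q\})$ can be chosen with $w\in A_{p/r}$; this follows from $w\in A_p$ and explains the $A_p$-type hypothesis assumed throughout this section.
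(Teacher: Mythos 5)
Your overall architecture (retract--coretract reduction to sequence-valued weighted $L^p$-spaces, the squeeze of Proposition \ref{prop:squeezeHF} for the $H$- and $W$-cases, and the weighted interpolation theorem $(L^{p_0}(w_0;Z),L^{p_1}(w_1;Z))_{\theta,p}=L^p(w;Z)$ for \eqref{eq:TrLinterp} and \eqref{eq:TrLinterp1}) coincides with the paper's, which cites \cite[Theorems 1.18.2 and 1.18.5]{Tr1} for the two interpolation facts you invoke; your treatment of \eqref{eq:Besovinterpolation}, \eqref{eq:TrLinterp} and \eqref{eq:TrLinterp1}, including the explanation of why $q=1$ only yields an embedding, matches the paper almost verbatim. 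There is, however, a genuine gap in your route to \eqref{eq:TLinterpolation}. In the strategy paragraph you assert that Proposition \ref{prop:weightedmultiplier} gives boundedness of the retraction $R(g_k)=\sum_k \widetilde{S}_k g_k$ from $L^p(\R^d,w;\ell^q_s(X))$ onto $F^s_{p,q}(\R^d,w;X)$. That proposition applies only to tuples whose Fourier transforms lie in prescribed compact sets, which arbitrary $(g_k)$ do not; the tool that actually works is Lemma \ref{lem:Stein}, and it rests on the Fefferman--Stein inequality (Proposition \ref{prop:FSvectorvalued}), hence requires $q\in(1,\infty]$. You concede exactly this failure at $q=1$ when discussing \eqref{eq:TrLinterp1}, but overlook it for \eqref{eq:TLinterpolation}, which is claimed for all $q_0,q_1\in[1,\infty]$ --- and the missing case $q_i=1$ is precisely the one your subsequent sandwich argument for \eqref{eq:Besselinterpolation} and \eqref{eq:Sobolevinterpolation} consumes, since the left end of your squeeze is $F^{s_i}_{p,1}$.

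The paper avoids this entirely: it derives \eqref{eq:TLinterpolation} from \eqref{eq:Besovinterpolation} via the elementary sandwich $B^{s_i}_{p,\min\{p,q_i\}}\hookrightarrow F^{s_i}_{p,q_i}\hookrightarrow B^{s_i}_{p,\max\{p,q_i\}}$ of Proposition \ref{prop:elementaryembeddingBTL} and the independence of \eqref{eq:Besovinterpolation} of the microscopic parameters, and it obtains \eqref{eq:Besselinterpolation} and \eqref{eq:Sobolevinterpolation} from the Besov squeezes \eqref{eq:BesovH} and \eqref{eq:BesovW} rather than the Triebel--Lizorkin ones, so no interpolation of $F$-spaces with $q=1$ is ever needed. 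Your argument is repaired the same way: replace the $F$-retract step by the Besov sandwich (or restrict the retract to $q_i\in(1,\infty]$ and sandwich the remaining cases), and run the $H$/$W$ squeeze through $B^{s_i}_{p,1}$ and $B^{s_i}_{p,\infty}$. Your alternative ``Triebel-type lemma'' for $(L^p(w;\ell^{q_0}_{s_0}(X)),L^p(w;\ell^{q_1}_{s_1}(X)))_{\theta,q}$ is a legitimate classical route, but as stated it buys nothing here, because transferring it back to the $F$-scale still requires the retraction to be bounded at both endpoints.
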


\begin{proof} Since all the weights under consideration are of class $A_p$, the proofs are straightforward generalizations of the unweighted case, as presented in \cite[Section 2.4]{Tr1} and \cite[Proposition 12]{SchmSiunpublished}, for instance.

We nevertheless provide the details for (\ref{eq:TrLinterp}) and (\ref{eq:TrLinterp1}). By Proposition \ref{prop:lifting} it suffices to consider $s=0$, respectively.
Let $(\varphi_n)_{n\geq 0} \in \Phi$. For $p_*\in (1,\infty)$, $q_*\in [1,\infty]$ and $w_*\in A_{p_*}$ it follows from the definitions that the map
$$S: F_{p_*,q_*}^0(\R^d,w_*;X) \to L^{p_*}(\R^d, w_*;\ell^{q_*}(X)), \qquad S f = (\varphi_n * f)_{n\geq 0},$$
is continuous. If $(\psi_n)_{n\geq 0}$ is as in the proof of Proposition \ref{prop:lifting}, then Lemma \ref{lem:Stein} implies that
$$R: L^{p_*}(\R^d, w_*;\ell^{q_*}(X)) \to F_{p_*,q_*}^0(\R^d,w_*;X), \qquad R(g_k)_{k\geq 0} = \sum_{k=0}^\infty \psi_k* g_k,$$
is continuous for $q\in (1,\infty]$. Now if $p_0,p_1,p$ and $w_0,w_1,w$ are as in \eqref{eq:TrLinterp} and \eqref{eq:TrLinterp1}, then \cite[Theorem 1.18.5]{Tr1} gives
\begin{equation}\label{1}
 (L^{p_0}(\R^d, w_0;\ell^q(X)), L^{p_1}(\R^d, w_1;\ell^q(X)))_{\theta,p} = L^p(\R^d, w;\ell^q(X)), \qquad q\in [1,\infty].
\end{equation}
Since $R$ is a right-inverse for $S$, (\ref{eq:TrLinterp}) is a consequence of the well-known retraction-corectraction method (see \cite[Theorem 1.2.4]{Tr1}). Moreover, (\ref{1}) implies that
$$S: (F^{0}_{p_0,1}(\R^d,w_0;X), F^{0}_{p_1,1}(\R^d,w_1;X))_{\theta,p} \to L^p(\R^d, w;\ell^1(X))$$
is continuous as well. Thus \eqref{eq:TrLinterp1} follows from
\begin{equation}\label{Finteremb}
 \|f\|_{F_{p,1}^0(\R^d,w;X)} = \|Sf\|_{L^p(\R^d, w;\ell^1(X))} \leq C\, \|f\|_{(F^{0}_{p_0,1}(\R^d,w_0;X), F^{0}_{p_1,1}(\R^d,w_1;X))_{\theta,p}}.
\end{equation}

Employing Lemma \ref{lem:Stein} and interpolation theory for a class of weighted $\ell^q$-spaces (see \cite[Theorem 1.18.2]{Tr1}), one can show (\ref{eq:Besovinterpolation}) in a similiar way. Finally, the identities (\ref{eq:TLinterpolation}), \eqref{eq:Besselinterpolation} and \eqref{eq:Sobolevinterpolation} follow from the independence of (\ref{eq:Besovinterpolation}) of the microscopic parameters $q_0,q_1\in [1,\infty]$ and the Propositions \ref{prop:elementaryembeddingBTL} and \ref{prop:squeezeHF}.\end{proof}

\begin{remark} \label{interAinfty} The operator $S$ in the proof above is continuous for all $w\in A_\infty$. It thus follows from \eqref{Finteremb} that the embeddings from the left to the right in \eqref{eq:TrLinterp} and \eqref{eq:TrLinterp1} are true for $w\in A_\infty$.
\end{remark}

\begin{remark}
 In \cite{Bui82}, interpolation results for scalar $B$- and $F$-spaces are shown for a single weight $w\in A_\infty$. Even more general results and different proofs are given in \cite[Theorem 2.14]{Ry01}.
\end{remark}

After these preparations we can show embeddings of Jawerth--Franke type, which is an improvement of the embeddings in \eqref{eq:introBesov} and \eqref{eq:introF}. We argue similiar to \cite[Theorem 6]{SchmSiunpublished}.

\begin{theorem}\label{thm:sobolevJawerth}
Let $X$ be a Banach space,  $1<p_0<p_1<\infty$, $s_0,s_1\in \R$, $q\in [1, \infty]$ and $w_0(x) = |x|^{\gamma_0}$, $w_1(x) = |x|^{\gamma_1}$ with $\gamma_0\in (-d,d(p_0-1))$ and $\gamma_1\in (-d,d(p_1-1))$. Suppose
\begin{equation}\label{eq:JawerthFranke}
\frac{\gamma_1}{p_1} \leq \frac{\gamma_0}{p_0} \quad \text{and} \quad s_0 - \frac{d+\gamma_0}{p_0} \geq s_1 - \frac{d+\gamma_1}{p_1}.
\end{equation}
Then
\begin{align}
\label{eq:JF1} B^{s_0}_{p_0,p_1}(\R^d,w_0;X) & \hookrightarrow F^{s_1}_{p_1,q}(\R^d,w_1;X),\\
\label{eq:JF2} F^{s_0}_{p_0,q}(\R^d,w_0;X) & \hookrightarrow B^{s_1}_{p_1,p_0}(\R^d,w_1;X).
\end{align}

\end{theorem}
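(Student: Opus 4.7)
The plan is to follow the approach of \cite[Theorem 6]{SchmSiunpublished}, combining the sharp-case Sobolev embeddings of Theorem \ref{thm:main2} with the real interpolation identities of Proposition \ref{thm:realinter}; the decisive input is that Sobolev embeddings between $F$-spaces are independent of the microscopic parameters. As a first step I would reduce to the sharp case $s_0 - \frac{d+\gamma_0}{p_0} = s_1 - \frac{d+\gamma_1}{p_1}$: the case of strict inequality then follows from the sharp case by Theorem \ref{thm:main1} together with the elementary embeddings of Proposition \ref{prop:elementaryembeddingBTL}.

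For the sharp case, fix $\theta \in (0,1)$ and pick $\sigma_- < s_0 < \sigma_+$ with $s_0 = (1-\theta)\sigma_- + \theta \sigma_+$, and set $\tau_\pm := \sigma_\pm + \frac{d+\gamma_1}{p_1} - \frac{d+\gamma_0}{p_0}$, so that $s_1 = (1-\theta)\tau_- + \theta \tau_+$ and all four points $(\sigma_\pm, p_0, \gamma_0)$, $(\tau_\pm, p_1, \gamma_1)$ lie on the common sharp line. Theorem \ref{thm:main2} then yields, for arbitrary microscopic parameters,
\[
F^{\sigma_\pm}_{p_0, q_\pm}(\R^d, w_0; X) \hookrightarrow F^{\tau_\pm}_{p_1, q'_\pm}(\R^d, w_1; X).
\]

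Applying the functoriality of real interpolation to these endpoint embeddings together with identity \eqref{eq:TLinterpolation} on both sides: with interpolation parameter $p_1$ one obtains $B^{s_0}_{p_0, p_1}(w_0; X) \hookrightarrow B^{s_1}_{p_1, p_1}(w_1; X)$. Since $B^{s_1}_{p_1, p_1} = F^{s_1}_{p_1, p_1}$ by Proposition \ref{prop:elementaryembeddingBTL}(3) and $F^{s_1}_{p_1, p_1} \hookrightarrow F^{s_1}_{p_1, q}$ for $q \geq p_1$ by Proposition \ref{prop:elementaryembeddingBTL}(1), this proves \eqref{eq:JF1} in the range $q \geq p_1$. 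Analogously, interpolation with parameter $p_0$ yields $F^{s_0}_{p_0, p_0}(w_0; X) = B^{s_0}_{p_0, p_0}(w_0; X) \hookrightarrow B^{s_1}_{p_1, p_0}(w_1; X)$, which combined with $F^{s_0}_{p_0, q}(w_0; X) \hookrightarrow F^{s_0}_{p_0, p_0}(w_0; X)$ for $q \leq p_0$ establishes \eqref{eq:JF2} in that range.

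The remaining ranges -- $q < p_1$ for \eqref{eq:JF1} and $q > p_0$ for \eqref{eq:JF2} -- constitute the genuine Jawerth--Franke improvement beyond the Besov-scale embedding of Theorem \ref{thm:main1} and form the main obstacle, since the direct interpolation scheme above necessarily produces the $B$-scale on the right and only the elementary embedding $F^{s_1}_{p_1,p_1} \hookrightarrow F^{s_1}_{p_1,q}$ for $q \geq p_1$ is available for free. To close the argument I would exploit the identities \eqref{eq:TrLinterp}--\eqref{eq:TrLinterp1} of Proposition \ref{thm:realinter}, which realise $F^{s}_{p,q}(w)$ as a real interpolation of $F^{s}_{p_\ast,q}(w_\ast)$ and $F^{s}_{p^\ast,q}(w^\ast)$ across varying $p$ at fixed $s$ and $q$: by interpolating the already-established ``easy'' cases at pairs of auxiliary exponents straddling $p_1$ (respectively $p_0$), with $\gamma_\ast,\gamma^\ast$ chosen compatibly on the common sharp line and on the weight-interpolation relation $w^{1/p} = w_\ast^{(1-\eta)/p_\ast} w^{\ast\,\eta/p^\ast}$, one aims to recover the full range of $q$. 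The delicate technical points will be keeping the interpolated weight exponents within the admissible range $(-d,\infty)$ and handling the endpoint $q=1$, where \eqref{eq:TrLinterp1} provides only a continuous embedding rather than the equality available for $q>1$.
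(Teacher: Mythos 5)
Your reduction to the sharp case and your treatment of the ``easy'' ranges ($q\geq p_1$ for \eqref{eq:JF1}, $q\leq p_0$ for \eqref{eq:JF2}) are correct, and you rightly identify that the real content is the remaining range. But the scheme you propose to close that range does not work. You suggest interpolating the already-established easy cases across varying integrability at fixed smoothness and fixed microscopic parameter, via \eqref{eq:TrLinterp}--\eqref{eq:TrLinterp1}. The obstruction is that \eqref{eq:TrLinterp} and \eqref{eq:TrLinterp1} require the \emph{same} microscopic parameter $q$ at both endpoints, while your easy cases only supply targets $F^{s_1}_{r_i,q}(v_i)$ with $q\geq r_i$. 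Taking auxiliary exponents $r_0<p_1<r_1$ straddling $p_1$ forces the common $q$ to satisfy $q\geq r_1>p_1$, so the interpolated target is $F^{s_1}_{p_1,q}$ with $q$ \emph{strictly larger} than $p_1$ --- worse than the easy case you started from, and in particular you can never reach $q=1$ this way. Iterating does not help: the microscopic parameter of the output is always at least the largest auxiliary integrability exponent.

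The correct move (and the one the paper makes) is to interpolate not the easy cases but the $q$-independent $F$-to-$F$ Sobolev embeddings of Theorem \ref{thm:main2} themselves, applied at the endpoints \emph{already with microscopic parameter} $1$. Concretely, for \eqref{eq:JF1} with $q=1$: choose $p_0<r_0<p_1<r_1$, weights $v_i(x)=|x|^{\mu_i}$ with $\mu_0/r_0=\mu_1/r_1=\gamma_1/p_1$, and smoothness parameters $t_0\neq t_1$ determined by $t_i-\frac{d+\gamma_0}{p_0}=s_1-\frac{d+\mu_i}{r_i}$. Theorem \ref{thm:main2} gives $F^{t_i}_{p_0,1}(\R^d,w_0;X)\hookrightarrow F^{s_1}_{r_i,1}(\R^d,v_i;X)$. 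The sources form a family with \emph{varying smoothness} at fixed $(p_0,w_0)$, so by \eqref{eq:TLinterpolation} their $(\theta,p_1)$-interpolation space is exactly $B^{s_0}_{p_0,p_1}(\R^d,w_0;X)$; the targets form a family with \emph{fixed smoothness} $s_1$, fixed microscopic parameter $1$, and varying $(r_i,v_i)$, so \eqref{eq:TrLinterp1} embeds their interpolation space into $F^{s_1}_{p_1,1}(\R^d,w_1;X)$. This asymmetry --- smoothness varies on the Besov side, integrability/weight varies on the $F$-side --- is the Jawerth--Franke trick, and it is exactly what your symmetric setup in the second paragraph (both sides varying in smoothness at fixed integrability) misses. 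The proof of \eqref{eq:JF2} is the mirror image: write $F^{s_0}_{p_0,\infty}(\R^d,w_0;X)$ as a $(\theta,p_0)$-interpolation of $F^{s_0}_{r_i,\infty}(\R^d,v_i;X)$ with $r_0<p_0<r_1$ via \eqref{eq:TrLinterp}, embed each endpoint into $B^{t_i}_{p_1,\infty}(\R^d,w_1;X)$ using Theorem \ref{thm:main1}, and conclude with \eqref{eq:Besovinterpolation}.
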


\begin{proof} For (\ref{eq:JF1}), by Proposition \ref{prop:elementaryembeddingBTL} it suffices to consider the case $q=1$ and $s_0 -\frac{d+\gamma}{p_0} = s_1 - \frac{d+\gamma}{p_1}.$ Let $r_0,r_1\in(1, \infty)$ be such that $p_0<r_0<p_1<r_1$, and let $\mu_0 \in (-d, d(r_0-1))$ and  $\mu_1 \in (-d, d(r_1-1))$  satisfy $\frac{\mu_0}{r_0} = \frac{\mu_1}{r_1} = \frac{\gamma_1}{p_1}.$ Let further $\theta\in (0,1)$ and $t_0, t_1\in \R$ be such that
\[\frac{1}{p_1} = \frac{1-\theta}{r_0} + \frac{\theta}{r_1}, \qquad t_0 -\frac{d + \gamma_0}{p_0} = s_1 -\frac{d+\mu_0}{r_0} , \qquad t_1 -\frac{d + \gamma_0}{p_0} = s_1 -\frac{d+\mu_1}{r_1} .\]
Since $(1-\theta) t_0 + \theta t_1 = s_0$, we obtain from (\ref{eq:Besovinterpolation}) that
$$
B^{s_0}_{p_0,p_1}(\R^d,w_0;X)  = (F^{t_0}_{p_0,1}(\R^d,w_0;X),  F^{t_1}_{p_0,1}(\R^d,w_0;X))_{\theta,p_1}.
$$
Setting $v_0(x) = |x|^{\mu_0}$ and $v_1(x) = |x|^{\mu_1}$, Theorem \ref{thm:main2} gives the embeddings
\[F^{t_0}_{p_0,1}(\R^d,w_0;X) \hookrightarrow F^{s_1}_{r_0,1}(\R^d,v_0;X), \qquad F^{t_1}_{p_0,1}(\R^d,w_0;X) \hookrightarrow F^{s_1}_{r_1,1}(\R^d,v_1;X),\]
due to the definition of $t_0, t_1$ and $\frac{\mu_0}{r_0} = \frac{\mu_1}{r_1} = \frac{\gamma_1}{p_1}\leq  \frac{\gamma_0}{p_0}$. Therefore
$$B^{s_0}_{p_0,p_1}(\R^d,w_0;X) \hookrightarrow (F^{s_1}_{r_0,1}(\R^d,v_0;X), F^{s_1}_{r_1,1}(\R^d,v_1;X))_{\theta, p_1}  \hookrightarrow F^{s_1}_{p_1,1}(\R^d,w_1;X),$$
where the latter embedding follows from \eqref{eq:TrLinterp1}, as $\frac{1}{p_1} = \frac{1-\theta}{r_0} + \frac{\theta}{r_1}$ and  $\frac{\gamma_1}{p_1} = (1-\theta)\frac{\mu_0}{r_0} + \theta \frac{\mu_1}{r_1}$.

To show (\ref{eq:JF2}), as above it suffices to consider $q=\infty$ and $s_0 - \frac{d+\gamma}{p_0} = s_1 -\frac{d+\gamma}{p_1}$. Let $r_0,r_1\in(1, \infty)$, $\theta\in (0,1)$, $\mu_0 \in (-d, d(r_0-1))$ and  $\mu_1 \in (-d, d(r_1-1))$ be such that
\[r_0<p_0<r_1<p_1, \qquad \frac{1}{p_0} = \frac{1-\theta}{r_0} + \frac{\theta}{r_1}, \qquad \frac{\mu_0}{r_0} = \frac{\mu_1}{r_1} = \frac{\gamma_0}{p_0}.\]
Setting again  $v_0(x) = |x|^{\mu_0}$ and $v_1(x) = |x|^{\mu_1}$, it follows from \eqref{eq:TrLinterp} that
$$F^{s_0}_{p_0,\infty}(\R^d,w_0;X)  = (F^{s_0}_{r_0,\infty}(\R^d,v_0;X) ,  F^{s_0}_{r_1,\infty}(\R^d,v_1;X) )_{\theta,p_0}.$$
Let  the numbers $t_0, t_1\in \R$ be defined by
$$s_0 -\frac{d+\mu_0}{r_0} = t_0 -\frac{d + \gamma_1}{p_1} , \qquad s_0 -\frac{d+\mu_1}{r_1} = t_1 -\frac{d + \gamma_1}{p_1}.$$
Using that $\frac{\gamma_1}{p_1} \leq \frac{\gamma_0}{p_0} = \frac{\mu_0}{r_0} = \frac{\mu_1}{r_1}$,  Proposition \ref{prop:elementaryembeddingBTL} and Theorem \ref{thm:main1} yield
\[F^{s_0}_{r_0,\infty}(\R^d,v_0;X) \hookrightarrow B^{s_0}_{r_0,\infty}(\R^d,v_0;X) \hookrightarrow B^{t_0}_{p_1,\infty}(\R^d,w_1;X),\]
\[F^{s_0}_{r_1,\infty}(\R^d,v_1;X) \hookrightarrow B^{s_0}_{r_1,\infty}(\R^d,v_1;X) \hookrightarrow B^{t_1}_{p_1,\infty}(\R^d,w_1;X).\]
We thus have
\begin{align*}
F^{s_0}_{p_0,\infty}(\R^d,w_0;X) \hookrightarrow (B^{t_0}_{p_1,\infty}(\R^d,w_1;X), B^{t_1}_{p_1,\infty}(\R^d,w_1;X))_{\theta, p_0} = B^{s_1}_{p_1,p_0}(\R^d,w_1;X),
\end{align*}
where the latter identity follows from $(1-\theta) t_0 + \theta t_1 = s_1$ and (\ref{eq:Besovinterpolation}).\end{proof}

\begin{remark}
For $w_0=w_1\in A_\infty$, the scalar versions of (\ref{eq:JF1}) and (\ref{eq:JF2}) are shown in \cite[Theorem 2.6]{Bui82} and \cite[Proposition 1.8]{HaSkr}.
\end{remark}

\begin{remark}\label{discJF} In the scalar case, the interpolation identities \eqref{eq:Besovinterpolation} and \eqref{eq:TLinterpolation} are shown in \cite{Bui82} for $w\in A_\infty$. Inspecting the proof for (\ref{eq:JF1}), we see that only the interpolation embedding \eqref{eq:TrLinterp1} was  used, which is also true for $w\in A_\infty$ by Remark \ref{interAinfty}. Hence \eqref{eq:JawerthFranke} implies (\ref{eq:JF1}) for all weight exponents $\gamma_0, \gamma_1 >-d$.

Also for (\ref{eq:JF2}) we expect that the restrictions $\gamma_0<d(p_0-1)$ and $\gamma_1<d(p_1-1)$ are not necessary. In case $\gamma_1/p_1<\gamma_0/p_0$ one can give a more direct proof which only makes use of \eqref{eq:TrLinterp} for a single weight. The scalar version of \eqref{eq:TrLinterp} is shown in \cite[Theorem 3.5]{Bui82}. Hence \eqref{eq:JawerthFranke} implies (\ref{eq:JF1}) for all $\gamma_0,\gamma_1 >-d$ under these assumptions.

However, the sharp case $\gamma_1/p_1=\gamma_0/p_0$ for (\ref{eq:JF2}) remains open. One needs \eqref{eq:TrLinterp} also for $A_\infty$-weights, which seems to be an open problem (see \cite[Remark 3.4]{Bui82}).
\end{remark}

\section{Embeddings into $L^p$-spaces and H\"older spaces}

In this section we discuss conditions under which weighted spaces of smooth functions embed into function spaces such as $L^p$-spaces and spaces of H\"older continuous functions. The results are consequences of our main results.

\begin{proposition}\label{prop:intoweightedLpB}
Let $X$ be a Banach space, let $1<p_0,p_1 \leq \infty$, $q_0\in [1, p_0]$.
Let $w_0(x) = |x|^{\gamma_0}$ and $w_1(x) = |x|^{\gamma_1}$ with $\gamma_0,\gamma_1>-d$. Assume
\[s_0-\frac{d+\gamma_0}{p_0} \geq -\frac{d+\gamma_1}{p_1}, \qquad \frac{\gamma_1}{p_1} \leq \frac{\gamma_0}{p_0} \qquad \text{and} \qquad \frac{d+\gamma_1}{p_1}<\frac{d+\gamma_0}{p_0}.\]
If $p_0\leq p_1$ or $q_0=1$, then
\[
B^{s_0}_{p_0,q_0}(\R^d,w_0;X)\hookrightarrow L^{p_1}(\R^d,w_1;X),
\]
\end{proposition}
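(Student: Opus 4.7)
The plan is to split along the two clauses of the hypothesis and, in each, assemble the embedding out of building blocks already established in the paper.

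For the first branch $q_0 = 1$, I would argue by direct Littlewood--Paley summation. Writing $f = \sum_{k \geq 0} S_k f$ and using the triangle inequality reduces matters to bounding $\sum_k \|S_k f\|_{L^{p_1}(\R^d,w_1;X)}$ by $\|f\|_{B^{s_0}_{p_0,1}(\R^d,w_0;X)}$. Since each $\widehat{S_k f}$ is supported in a ball of radius of order $2^k$, and the second and third standing hypotheses are exactly condition \eqref{cond:nikolskii}, Proposition \ref{prop:Nikolskii} (with $\alpha=0$) gives
\[
\|S_k f\|_{L^{p_1}(\R^d,w_1;X)} \leq C\, 2^{k\delta}\, \|S_k f\|_{L^{p_0}(\R^d,w_0;X)}, \qquad \delta = \frac{d+\gamma_0}{p_0}-\frac{d+\gamma_1}{p_1}>0.
\]
The first standing hypothesis says precisely $s_0 \geq \delta$, so $2^{k\delta}\leq 2^{ks_0}$ for $k \geq 0$, and summing over $k$ yields $\|f\|_{L^{p_1}(\R^d,w_1;X)} \leq C\|f\|_{B^{s_0}_{p_0,1}(\R^d,w_0;X)}$.

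For the branch $p_0 \leq p_1$ (substantively with $p_1 < \infty$), I would route through Triebel--Lizorkin spaces in three steps. Since $q_0 \leq p_0$, Proposition \ref{prop:elementaryembeddingBTL}(3) gives $B^{s_0}_{p_0,q_0}(\R^d,w_0;X) \hookrightarrow F^{s_0}_{p_0,q_0}(\R^d,w_0;X)$. Next, the three standing hypotheses are exactly condition \eqref{cond:expF} of Theorem \ref{thm:main2} with $s_1=0$, so that theorem supplies
\[
F^{s_0}_{p_0,q_0}(\R^d,w_0;X) \hookrightarrow F^0_{p_1,1}(\R^d,w_1;X);
\]
the crucial point is that the microscopic indices play no role in the $F$-scale, so the sharp equality case $s_0-(d+\gamma_0)/p_0 = -(d+\gamma_1)/p_1$ is admissible for every $q_0$. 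Finally, because $\gamma_1>-d$ gives $w_1\in A_\infty$, the elementary sum-over-Littlewood--Paley-pieces argument that proves the first embedding of \eqref{eq:TriebelLizorkinH} in Proposition \ref{prop:squeezeHF}, extended to $A_\infty$-weights as observed in Remark \ref{remarkProp3.12}, yields $F^0_{p_1,1}(\R^d,w_1;X) \hookrightarrow L^{p_1}(\R^d,w_1;X)$. Composing the three embeddings completes this case.

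The hardest part is essentially bookkeeping: one must check that the different pieces snap together precisely under the stated hypotheses, and in particular that the sharp equality is handled by the $F$-scale (Theorem \ref{thm:main2}) in the second branch and by the direct Nikol'skij argument (which forces $q_0 = 1$ in the Besov scale) in the first. The case $p_1 = \infty$ is absorbed into the first branch through the alternative $q_0 = 1$, consistent with the fact that Theorem \ref{thm:main2} is only formulated for $p_1<\infty$.
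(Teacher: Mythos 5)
Your proof is correct where it applies and follows essentially the same route as the paper. The $p_0\leq p_1$ branch is the paper's own: pass to the $F$-scale via Proposition \ref{prop:elementaryembeddingBTL}(3) (the paper uses $F^{s_0}_{p_0,p_0}$ as the intermediate space rather than $F^{s_0}_{p_0,q_0}$, which is immaterial), apply Theorem \ref{thm:main2} with $s_1=0$, $q_1=1$, and finish with $F^{0}_{p_1,1}(\R^d,w_1;X)\hookrightarrow L^{p_1}(\R^d,w_1;X)$ from Proposition \ref{prop:squeezeHF} together with Remark \ref{remarkProp3.12} (needed since $\gamma_1>-d$ only gives $w_1\in A_\infty$). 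For $q_0=1$ the paper instead cites Theorem \ref{thm:main1} to obtain $B^{s_0}_{p_0,1}(\R^d,w_0;X)\hookrightarrow B^{0}_{p_1,1}(\R^d,w_1;X)$ and then the same squeeze; your direct argument via Proposition \ref{prop:Nikolskii} applied to each $S_kf$ followed by summation is precisely what one gets by unwinding those two citations, so the mathematics coincides.

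There is one genuine coverage issue. Your two branches are ``$q_0=1$'' and ``$p_0\leq p_1<\infty$'', while the hypothesis of the proposition is ``$p_0\leq p_1$ or $q_0=1$'' with $p_1\leq\infty$ permitted. Hence the case $p_1=\infty$ with $q_0\in(1,p_0]$ lies within the scope of the statement (the clause $p_0\leq p_1$ then holds automatically) but is handled by neither of your branches; your closing claim that $p_1=\infty$ is ``absorbed through the alternative $q_0=1$'' is not correct, since $q_0=1$ is not forced there. To be fair, the paper's proof has exactly the same gap (its $p_0\leq p_1$ branch reduces to Proposition \ref{prop:intoweightedLpF}, which is only stated for $p_1<\infty$), and in the borderline instance $p_1=\infty$, $\gamma_0=0$, $s_0=d/p_0$, $q_0>1$ the asserted embedding would read $B^{d/p_0}_{p_0,q_0}(\R^d)\hookrightarrow L^\infty(\R^d)$, which is known to fail for $q_0>1$ (see \cite{SiTr}); so the statement should really be read with $p_1<\infty$ unless $q_0=1$. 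Apart from this corner, which you should exclude explicitly rather than claim to absorb, your argument is complete.
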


\begin{proposition}\label{prop:intoweightedLpF}
Let $X$ be a Banach space, let $1<p_0\leq p_1 <\infty$, $q_0\in [1, \infty]$. Let $w_0(x) = |x|^{\gamma_0}$ and $w_1(x) = |x|^{\gamma_1}$ with $\gamma_0,\gamma_1>-d$. If
\[s_0-\frac{d+\gamma_0}{p_0} \geq -\frac{d+\gamma_1}{p_1},  \qquad \text{and} \qquad  \frac{\gamma_1}{p_1} \leq \frac{\gamma_0}{p_0},\]
then
\[
F^{s_0}_{p_0,q_0}(\R^d,w_0;X)\hookrightarrow L^{p_1}(\R^d,w_1;X),
\]
\end{proposition}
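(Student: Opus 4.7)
The plan is to factor the claimed embedding through the intermediate Triebel--Lizorkin space $F^{0}_{p_1,1}(\R^d,w_1;X)$:
\[
F^{s_0}_{p_0,q_0}(\R^d,w_0;X)\hookrightarrow F^{0}_{p_1,1}(\R^d,w_1;X)\hookrightarrow L^{p_1}(\R^d,w_1;X).
\]

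The second arrow is the left-hand embedding of \eqref{eq:TriebelLizorkinH} in Proposition \ref{prop:squeezeHF}. Strictly speaking that proposition assumes $w_1\in A_{p_1}$, but the argument given there for this particular direction rests only on the pointwise bound $\|f(x)\|\le\sum_{n\ge 0}\|S_nf(x)\|$ together with the density of Schwartz functions supplied by Lemma \ref{density}; as pointed out in Remark \ref{remarkProp3.12}, it therefore extends to arbitrary $w\in A_\infty$. Since $\gamma_1>-d$ makes $w_1$ a member of $A_\infty$, this leg comes for free.

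For the first arrow I would apply Theorem \ref{thm:main2} with $s_1=0$ and $q_1=1$. Two of the three inequalities comprising condition \eqref{cond:expF} are literally the hypotheses of the proposition. The remaining one, $\frac{d+\gamma_1}{p_1}<\frac{d+\gamma_0}{p_0}$, rewrites as $\frac{d}{p_1}-\frac{d}{p_0}<\frac{\gamma_0}{p_0}-\frac{\gamma_1}{p_1}$: when $p_0<p_1$ the left side is strictly negative while the right side is nonnegative by hypothesis, so the inequality holds automatically (this is remark~(iv) of the introduction).

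The main obstacle is the borderline case $p_0=p_1$, where $\gamma_1/p_1\le\gamma_0/p_0$ only forces $\gamma_1\le\gamma_0$. If $\gamma_1<\gamma_0$ the preceding argument still goes through unchanged. If $\gamma_0=\gamma_1$ the first hypothesis collapses to $s_0\ge 0$ and Theorem \ref{thm:main2} no longer applies directly; however, for $s_0>0$, Proposition \ref{prop:elementaryembeddingBTL}(ii) immediately yields $F^{s_0}_{p_0,q_0}(\R^d,w_0;X)\hookrightarrow F^{0}_{p_0,1}(\R^d,w_0;X)=F^{0}_{p_1,1}(\R^d,w_1;X)$, after which the second arrow of the chain above finishes the argument.
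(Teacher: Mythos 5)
Your argument is precisely the paper's: the same factorization through $F^{0}_{p_1,1}(\R^d,w_1;X)$, with Theorem \ref{thm:main2} giving the first arrow and the left-hand embedding of Proposition \ref{prop:squeezeHF}, extended to $A_\infty$-weights via Remark \ref{remarkProp3.12}, giving the second. You are in fact more careful than the paper in isolating the borderline case $p_0=p_1$, $\gamma_0=\gamma_1$, which the paper's one-line proof silently skips; the only sub-case neither argument covers is $s_0=0$ there (permitted by the hypotheses), and in that sub-case the asserted embedding is actually false for large $q_0$ --- already for $X=\C$, $\gamma_0=\gamma_1=0$, $q_0=\infty$ one would get $F^{0}_{p,\infty}(\R^d)\hookrightarrow L^{p}(\R^d)=F^{0}_{p,2}(\R^d)$, contradicting the monotonicity of the $F$-scale in the microscopic parameter --- so this is a defect in the proposition's formulation rather than in your proof.
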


\begin{proof}[Proof of Propositions \ref{prop:intoweightedLpB} and \ref{prop:intoweightedLpF}]
Let $p_0\leq p_1$. By Proposition \ref{prop:elementaryembeddingBTL} and $q_0\leq p_0$ the embedding
$B^{s_0}_{p_0,q_0}(\R^d,w_0;X) \hookrightarrow F^{s_0}_{p_0,p_0}(\R^d,w_0;X)$ holds. Therefore, Proposition \ref{prop:intoweightedLpB} follows from Proposition \ref{prop:intoweightedLpF} in this case. The embeddings
\[F^{s_0}_{p_0,q_0}(\R^d,w_0;X)\hookrightarrow F^{0}_{p_1,1}(\R^d,w_1;X)\hookrightarrow L^{p_1}(\R^d,w_1;X)\]
are consequences of Theorem \ref{thm:main2} and Proposition \ref{prop:squeezeHF} (resp. Remark \ref{remarkProp3.12}).

If $p_0>p_1$ and $q_0=1$, then by Theorem \ref{thm:main1}
\[B^{s_0}_{p_0,1}(\R^d,w_0;X)\hookrightarrow B^{0}_{p_1,1}(\R^d,w_0;X)\hookrightarrow  L^{p_1}(\R^d,w_1;X),\]
where the last embedding follows again from Proposition \ref{prop:squeezeHF} (resp. Remark \ref{remarkProp3.12}).
\end{proof}

\begin{remark}
Many other results can be derived from Theorems \ref{thm:main1} and \ref{thm:main2}. Moreover,
Employing Proposition \ref{prop:elementaryembeddingBTL}, we see that a similar result as in Proposition \ref{prop:intoweightedLpF} above holds for $H^{s_0,p_0}(\R^d,w_0;X)$ and $W^{s_0,p_0}(\R^d,w_0;X)$ if $\gamma_0<d(p_0-1)$.
\end{remark}

For $m\in \N$, let $BU\!C^m(\R^d;X)$ denote the space of $m$-times differentiable functions with bounded and uniformly continuous derivatives. For $s=[s]+ s_*$ with $[s]\in \N_0$ and $s_*\in (0,1)$, let further $BU\!C^s(\R^d;X)$ the subspace of $BU\!C^{[s]}(\R^d;X)$ consisting of functions with $s_*$-H\"older continuous derivatives of order $[s]$.

\begin{proposition} \label{embedding-C}
Let $X$ be a Banach space, let $1<p_0<\infty$, $q_0\in [1, \infty]$ and $s_0\in \R$. Let $w_0(x) = |x|^{\gamma_0}$ with $\gamma_0\geq 0$.
If $s_1 = s_0-\frac{d+\gamma_0}{p_0}>0$ is not an integer, then
\begin{equation}\label{noninteger}
 E^{s_0,p_0}(\R^d,w_0;X)\hookrightarrow BU\!C^{s_1}(\R^d;X),
\end{equation}
where $E^{s_0,p_0}\in \{F^{s_0}_{p_0,q_0}, B^{s_0}_{p_0,q_0}\}$. If $m = s_0-\frac{d+\gamma_0}{p_0}\geq 0$ is an integer, then
\begin{equation}\label{integer}
B^{s_0}_{p_0,1}(\R^d,w_0;X)\hookrightarrow BU\!C^{m}(\R^d;X).
\end{equation}
Assuming that $0\leq \gamma_0 < d(p_0-1)$, these embeddings are also valid for $E^{s_0,p_0} = H^{s_0,p_0}$ and, if $s_0\in \N$, for $E^{s_0,p_0} = W^{s_0,p_0}$.
\end{proposition}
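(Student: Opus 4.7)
The plan is to reduce every case to the classical unweighted embeddings $B^{s_1}_{\infty,\infty}(\R^d;X)=BU\!C^{s_1}(\R^d;X)$ for non-integer $s_1>0$ and $B^{m}_{\infty,1}(\R^d;X)\hookrightarrow BU\!C^{m}(\R^d;X)$ for $m\in\N_0$; both are standard in the vector-valued setting (see \cite{SchmSiunpublished}), the first being essentially the definition of the vector-valued Zygmund--H\"older space and the second following from Bernstein's inequality applied to each $S_kf$ combined with summation over $k$. In this way, all of the weighted analysis can be absorbed into Theorem \ref{thm:main1}, and the vector-valued structure of $X$ plays no role in the reduction.

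For the non-integer statement \eqref{noninteger} with $E=B$, I apply Theorem \ref{thm:main1} with $p_1=\infty$, $\gamma_1=0$, $q_1=\infty$. Every requirement in \eqref{cond:expnew} is immediate: $\gamma_0/p_0\geq 0$ since $\gamma_0\geq 0$, $(d+\gamma_0)/p_0>0$, $q_0\leq\infty$, and the smoothness equality $s_0-(d+\gamma_0)/p_0=s_1=s_1-(d+\gamma_1)/p_1$ is precisely the hypothesis. This yields $B^{s_0}_{p_0,q_0}(\R^d,w_0;X)\hookrightarrow B^{s_1}_{\infty,\infty}(\R^d;X)=BU\!C^{s_1}(\R^d;X)$. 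The case $E=F$ reduces to the Besov case via the elementary embedding $F^{s_0}_{p_0,q_0}(\R^d,w_0;X)\hookrightarrow B^{s_0}_{p_0,\infty}(\R^d,w_0;X)$ provided by Proposition \ref{prop:elementaryembeddingBTL}. For $E\in\{H,W\}$, the hypothesis $\gamma_0<d(p_0-1)$ means $w_0\in A_{p_0}$, so Proposition \ref{prop:squeezeHF} gives $H^{s_0,p_0}(\R^d,w_0;X),\,W^{s_0,p_0}(\R^d,w_0;X)\hookrightarrow B^{s_0}_{p_0,\infty}(\R^d,w_0;X)$, again reducing to the Besov case.

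For the integer statement \eqref{integer}, I run the same argument with target microscopic index $q_1=1$ instead of $q_1=\infty$: the condition $q_0\leq q_1$ in \eqref{cond:expnew} now forces $q_0\leq 1$, which matches the hypothesis $q_0=1$ exactly. Theorem \ref{thm:main1} produces $B^{s_0}_{p_0,1}(\R^d,w_0;X)\hookrightarrow B^{m}_{\infty,1}(\R^d;X)$, and the classical unweighted embedding recalled above closes the argument.

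The only genuine subtlety is the interplay between the microscopic index and the sharpness condition: in \eqref{cond:expnew} one has equality of scaling exponents $s-(d+\gamma)/p$, so the microscopic parameters must satisfy $q_0\leq q_1$, which dictates the choice $q_1\in\{\infty,1\}$ according to whether $s_1$ is non-integer or integer; this explains why the hypothesis $q_0=1$ is required only in \eqref{integer}. Beyond the cited unweighted identifications, the proof uses nothing more than Theorem \ref{thm:main1} and the elementary embeddings of Section \ref{defpropspaces}.
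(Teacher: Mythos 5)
Your argument is correct and follows essentially the same route as the paper: all cases are reduced to the weighted Besov scale via Proposition \ref{prop:elementaryembeddingBTL} and Proposition \ref{prop:squeezeHF}, Theorem \ref{thm:main1} is applied in the sharp case \eqref{cond:expnew} with $p_1=\infty$ and trivial target weight, and the classical identifications $B^{s_1}_{\infty,\infty}(\R^d;X)=BU\!C^{s_1}(\R^d;X)$ (non-integer $s_1$) and $B^{m}_{\infty,1}(\R^d;X)\hookrightarrow BU\!C^{m}(\R^d;X)$ close the argument, exactly as in the paper (which cites \cite[Theorem 2.5.7]{Tri83} for these). Your observation that the choice of the target microscopic index $q_1\in\{\infty,1\}$ is dictated by the constraint $q_0\leq q_1$ in \eqref{cond:expnew} correctly explains why $q_0=1$ is needed only in \eqref{integer}.
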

\begin{proof}
To prove (\ref{noninteger}), by the Propositions \ref{prop:elementaryembeddingBTL} and \ref{prop:squeezeHF} it suffices to show that
\[B^{s_0}_{p_0,\infty}(\R^d,w_0;X)\hookrightarrow BU\!C^{s_1}(\R^d;X).\]
This embedding is a consequence of Theorem \ref{thm:main1} and
\[B^{s_1}_{\infty, \infty}(\R^d,w_0;X) =B^{s_1}_{\infty, \infty}(\R^d;X) = BU\!C^{s_1}(\R^d;X),\]
where the latter identity can be proved as in the scalar case (see \cite[Theorem 2.5.7]{Tri83}).

For (\ref{integer}), Theorem \ref{thm:main1} yields
\[B^{s_0}_{p_0,1}(\R^d,w_0;X)\hookrightarrow B^{m}_{\infty,1}(\R^d;X).\]
We further have
\[B^{m}_{\infty,1}(\R^d;X) \hookrightarrow  BU\!C^{m}(\R^d;X),\]
due to Proposition \ref{prop:differentiation} and (the proof of) \cite[Theorem 2.5.7]{Tri83}.
\end{proof}

\def\polhk#1{\setbox0=\hbox{#1}{\ooalign{\hidewidth
  \lower1.5ex\hbox{`}\hidewidth\crcr\unhbox0}}} \def\cprime{$'$}
  \def\cprime{$'$}

\end{document}